%%%%%%%%%%%%%%%%%%%%%%%%%%%%%%%%%%%%%%%%%%%%%%%%%%%%%%%%%%%%%%%%%%%%%%%%%%%
% v2-hecke-modifications.tex
% David Boozer
% 07 January 2018
% 12 February 2020 (revised version)
% 08 June 2020 (minor corrections)
%%%%%%%%%%%%%%%%%%%%%%%%%%%%%%%%%%%%%%%%%%%%%%%%%%%%%%%%%%%%%%%%%%%%%%%%%%%%%%
 
\documentclass{amsart}

\usepackage{graphicx}
\usepackage{amsmath}
\usepackage{amsfonts}
\usepackage{amssymb}
\usepackage{bbm}
\usepackage{epic}
\usepackage{amscd}

\usepackage{amsthm}

\usepackage[all]{xy}

\usepackage[margin=0.75in]{geometry}

\usepackage{appendix}

\usepackage{multirow}
\usepackage{tikz-cd}
\usepackage{url}

\newtheorem{theorem}{Theorem}[section]
\newtheorem{corollary}[theorem]{Corollary}
\newtheorem{lemma}[theorem]{Lemma}
\newtheorem{conjecture}[theorem]{Conjecture}

\theoremstyle{definition}
\newtheorem{definition}[theorem]{Definition}
\newtheorem{remark}[theorem]{Remark}
\newtheorem{example}[theorem]{Example}

\DeclareMathOperator{\im}{im}
\DeclareMathOperator{\coker}{coker}

\DeclareMathOperator{\Div}{div}

\DeclareMathOperator{\rank}{rank}

\DeclareMathOperator{\diag}{diag}

\DeclareMathOperator{\pdeg}{pdeg}
\DeclareMathOperator{\slope}{slope}
\DeclareMathOperator{\pslope}{pslope}

\DeclareMathOperator{\Bun}{Bun}

\newcommand{\Rats}{\mathbbm{Q}}
\newcommand{\Reals}{\mathbbm{R}}

\newcommand{\CP}{\mathbbm{CP}}
\newcommand{\RP}{\mathbbm{RP}}
\newcommand{\Complex}{\mathbbm{C}}

\newcommand{\calO}{{\mathcal O}}
\newcommand{\calH}{{\mathcal H}}
\newcommand{\calP}{{\mathcal P}}

\newcommand{\calE}{{\mathcal E}}
\newcommand{\calF}{{\mathcal F}}
\newcommand{\calY}{{\mathcal Y}}

\DeclareMathOperator{\Aut}{Aut}
\DeclareMathOperator{\Hom}{Hom}
\DeclareMathOperator{\Ext}{Ext}

\DeclareMathOperator{\Conf}{Conf}

\DeclareMathOperator{\Jac}{Jac}

\DeclareMathOperator{\gr}{gr}

\DeclareMathOperator{\MCG}{MCG}

\newcommand{\hmod}[2]{
  \xleftarrow
      [{\raisebox{.4ex}[0pt][0pt]{\ensuremath{\scriptstyle {#2}}}}]
      {\ensuremath{\scriptstyle \,\,{#1}\,\,}}}

%\newcommand{\hmod}[2]
%           {\overset{{#1}}{\underset{{#2}}{\longleftarrow}}}

\begin{document}

\title{Moduli spaces of Hecke modifications for rational and elliptic
  curves}

\author{David Boozer} 

\date{\today}

\begin{abstract}
  We propose definitions of complex manifolds $\calP_M(X,m,n)$ that
  could potentially be used to construct the symplectic Khovanov
  homology of $n$-stranded links in lens spaces.
  The manifolds $\calP_M(X,m,n)$ are defined as moduli spaces 
  of Hecke modifications of rank 2 parabolic bundles over an elliptic
  curve $X$.
  To characterize these spaces, we describe all possible Hecke
  modifications of all possible rank 2 vector bundles over $X$, and
  we use these results to define a canonical open embedding of
  $\calP_M(X,m,n)$ into $M^s(X,m+n)$, the moduli space of
  stable rank 2 parabolic bundles over $X$ with trivial determinant
  bundle and $m+n$ marked points.
  We explicitly compute $\calP_M(X,1,n)$ for $n=0,1,2$.
  For comparison, we present analogous results for the case of
  rational curves, for which a corresponding complex manifold
  $\calP_M(\CP^1,3,n)$ is isomorphic for $n$ even to a space
  $\calY(S^2,n)$ defined by Seidel and Smith that can be used to
  compute the symplectic Khovanov homology of $n$-stranded links in
  $S^3$.
\end{abstract}

\maketitle

\setcounter{tocdepth}{2} 
\tableofcontents

\section{Introduction}

Khovanov homology is a powerful invariant for distinguishing links in
$S^3$ \cite{Khovanov}.
Khovanov homology can be viewed as a categorification of the Jones
polynomial \cite{Jones}: one can recover the Jones polynomial of a
link from its Khovanov homology, but the Khovanov homology generally
contains more information.
For example, Khovanov homology can sometimes distinguish distinct
links that have the same Jones polynomial, and Khovanov homology
detects the unknot \cite{Kronheimer-2}, but it is not known
whether the Jones polynomial has this property.
The Khovanov homology of a link can be obtained in a purely
algebraic fashion by computing the homology of a chain complex
constructed from a generic planar projection of the link.
The Khovanov homology can also be obtained in a
geometric fashion by Heegaard-splitting $S^3$ into two 3-balls in
such a way that the intersection of the link with each 3-ball consists
of $r$ unknotted arcs.
Each 3-ball determines a Lagrangian in a symplectic manifold
$\calY(S^2,2r)$ known as the Seidel--Smith space, and the Lagrangian
Floer homology of the pair of Lagrangians yields the Khovanov homology
of the link (modulo grading) \cite{Abouzaid-Smith,Seidel-Smith}.
Recently Witten has outlined gauge theory interpretations of Khovanov
homology and the Jones polynomial in which the Seidel--Smith space is
viewed as a moduli space of solutions to the Bogomolny equations
\cite{Witten,Witten-2,Witten-3}.

Little is known about how Khovanov homology could be generalized to
describe links in 3-manifolds other than $S^3$, but such results would
be of great interest.
As a first step towards this goal, one might consider the problem of
generalizing Khovanov homology to links in 3-manifolds with Heegaard
genus 1; that is, lens spaces.
In analogy with the Seidel--Smith approach to Khovanov homology, one
could Heegaard-split a lens space into two solid tori, each
containing $r$ unknotted arcs, and compute the Lagrangian Floer
homology of a pair of Lagrangians intersecting in a symplectic
manifold $\calY(T^2,2r)$ that generalizes the Seidel--Smith space
$\calY(S^2,2r)$.

In this paper we propose candidates for the manifold $\calY(T^2,2r)$.
In outline, our approach is as follows.
First, using a result due to Kamnitzer \cite{Kamnitzer}, we
reinterpret the Seidel--Smith space $\calY(S^2,2r)$ as a moduli
space $\calH(\CP^1,2r)$ of equivalence classes of sequences of Hecke
modifications of rank 2 holomorphic vector bundles over a rational
curve.
Roughly speaking, a \emph{Hecke modification} is a way of locally
modifying a holomorphic vector bundle near a point to obtain a new
vector bundle.
We show that there is a close relationship between Hecke modifications
and parabolic bundles, and we use this relationship to reinterpret
the Kamnitzer space $\calH(\CP^1,2r)$ as a moduli space
$\calP_M(\CP^1,2r)$ of isomorphism classes of parabolic bundles with
marking data.
The space $\calP_M(\CP^1,2r)$ has two natural generalizations
$\calP_M(X,1,2r)$ and $\calP_M(X,3,2r)$ to the case of an elliptic
curve $X$, and we propose these spaces as candidates for
$\calY(T^2,2r)$.

To explain our approach in detail, we first introduce some additional
spaces.
Given a rank 2 holomorphic vector bundle $E$ over a curve $C$, we
define a set $\calH^{tot}(C,E,n)$ of equivalence classes of sequences
of $n$ Hecke modifications of $E$.
As is well-known, the set $\calH^{tot}(C,E,n)$ has the structure of
a complex manifold that is (noncanonically) isomorphic to
$(\CP^1)^{n}$, where each factor of $\CP^1$ corresponds to a single
Hecke modification.
The Kamnitzer space $\calH(\CP^1,2r)$ is then defined to be the open
submanifold of $\calH^{tot}(\CP^1,\calO \oplus \calO,2r)$ consisting
of equivalence classes of sequences of Hecke modifications for which
the terminal vector is semistable.

\begin{example}
For $r=1$, we have that
\begin{align}
  \nonumber
  \calH^{tot}(\CP^1, \calO \oplus \calO, 2) &= (\CP^1)^2, &
  \calH(\CP^1,2) &= (\CP^1)^2 - \{(a,a) \mid a \in \CP^1\}.
\end{align}
\end{example}

To generalize the Kamnitzer space to curves of higher genus,
we want to define moduli spaces of sequences of Hecke modifications in
which the initial vector bundle in the sequence is allowed to vary.
We define such spaces via the use of parabolic bundles.
For any curve $C$ and any natural numbers $m$ and $n$, we define a
moduli space $\calP_M^{tot}(C,m,n)$ of marked parabolic bundles.
We prove:

\begin{theorem}
\label{theorem:intro-PMtot}
The moduli space $\calP_M^{tot}(C,m,n)$ naturally has the structure of
a complex manifold isomorphic to a $(\CP^1)^n$-bundle over $M^s(C,m)$.
\end{theorem}

Here the complex manifold $M^s(C,m)$ is the moduli space of stable
rank 2 parabolic bundles over a curve $C$ with trivial determinant
bundle and $m$ marked points.
Roughly speaking, the space $\calP_M^{tot}(C,m,n)$ describes
isomorphism classes of sequences of $n$ Hecke modifications in which
the initial vector bundle in the sequence is allowed to range over
isomorphism classes that are parameterized by $M^s(C,m)$.
By imposing a condition analogous to the semistability condition
used to define the Kamnitzer space $\calH(\CP^1,2r)$,
we identify an open submanifold $\calP_M(C,m,n)$ of
$\calP_M^{tot}(C,m,n)$.
We prove that
$\calP_M^{tot}(\CP^1,n) := \calP_M^{tot}(\CP^1,3,n)$ is
isomorphic to $\calH^{tot}(\CP^1,\calO \oplus \calO,n)$ and
$\calP_M(\CP^1,2r) := \calP_M(\CP^1,3,2r)$ is isomorphic to to
$\calH(\CP^1,2r)$.
Thus the Seidel--Smith space $\calY(S^2,2r)$, the Kamnitzer space
$\calH(\CP^1,2r)$, and the space of marked parabolic bundles
$\calP_M(\CP^1,2r)$ are all isomorphic.
However, unlike $\calY(S^2,2r)$ or $\calH(\CP^1,2r)$, the space
$\calP_M(\CP^1,2r)$ naturally generalizes to the case of elliptic curves.

Although our primary motivation for introducing the spaces
$\calP_M^{tot}(\CP^1,n)$ and $\calP_M(\CP^1,n)$ is to facilitate
generalization, they are also useful for proving canonical versions of
certain results for $\CP^1$.
For example, we prove a canonical version of the noncanonical
isomorphism
$\calH^{tot}(\CP^1,\calO \oplus \calO,n) \rightarrow (\CP^1)^n$ for
$\calP_M^{tot}(\CP^1,n)$:

\begin{theorem}
\label{theorem:rational-isom-h-intro}
There is a canonical isomorphism
$\calP_M^{tot}(\CP^1,n) \rightarrow (M^{ss}(\CP^1,4))^n$.
\end{theorem}

Here the complex manifold $M^{ss}(\CP^1,4) \cong \CP^1$ is the moduli
space of semistable rank 2 parabolic bundles over $\CP^1$ with trivial
determinant bundle and 4 marked points.
We also prove:

\begin{theorem}
\label{theorem:cp1-hecke-embedding-intro}
There is a canonical open embedding
$\calP_M(\CP^1,m,n) \rightarrow M^s(\CP^1,m+n)$.
\end{theorem}

\begin{corollary}
There is a canonical open embedding
$\calP_M(\CP^1,2r) \rightarrow M^s(\CP^1,2r+3)$.
\end{corollary}

Here the complex manifold $M^s(\CP^1,m+n)$ is the moduli space of
stable rank 2 parabolic bundles over $\CP^1$ with trivial determinant
bundle and $m+n$ marked points.
For $r=1,2$ we have verified that the embedding of
$\calP_M(\CP^1,2r)$ into $M^s(\CP^1,2r+3)$ agrees with a
(noncanonical) embedding due to Woodward of the Seidel--Smith space
$\calY(S^2,2r)$ into $M^s(\CP^1,2r+3)$, and we conjecture that the
agreement holds for all $r$.

We next proceed to the case of elliptic curves.
We show that our reinterpretation $\calP_M(\CP^1,2r)$ of the
Seidel--Smith space $\calY(S^2,2r)$ has two natural generalizations to
the case of an elliptic curve $X$, namely $\calP_M(X,1,2r)$ and
$\calP_M(X,3,2r)$.
We prove an elliptic-curve analog to Theorem
\ref{theorem:rational-isom-h-intro}:

\begin{theorem}
\label{theorem:elliptic-isom-h-intro}
There is a canonical isomorphism
$\calP_M^{tot}(X,1,n) \rightarrow (M^{ss}(X))^{n+1}$.
\end{theorem}

Here the complex manifold $M^{ss}(X) \cong \CP^1$ is the moduli space
of semistable rank 2 vector bundles over an elliptic curve $X$ with
trivial determinant bundle.
Comparing Theorems \ref{theorem:rational-isom-h-intro} and
\ref{theorem:elliptic-isom-h-intro}, we see that
$\calP_M^{tot}(\CP^1,n)$ is (noncanonically) isomorphic to
$(\CP^1)^n$, whereas $\calP_M^{tot}(X,1,n)$ is (noncanonically)
isomorphic to $(\CP^1)^{n+1}$.
The extra factor of $\CP^1$ for $\calP_M^{tot}(X,1,n)$ can be
understood from Theorem \ref{theorem:intro-PMtot}, which states
that $\calP_M^{tot}(\CP^1,n) = \calP_M^{tot}(\CP^1,3,n)$ is a
$(\CP^1)^n$-bundle over $M^s(\CP^1,3)$ and
$\calP_M^{tot}(X,1,n)$ is a
$(\CP^1)^n$-bundle over $M^s(X,1)$.
But $M^s(\CP^1,3)$ is a single point, whereas $M^s(X,1)$ is isomorphic
to $\CP^1$.
We use Theorem \ref{theorem:elliptic-isom-h-intro} to explicitly
compute $\calP_M(X,1,n)$ for $n=0,1,2$:

\begin{theorem}
\label{theorem:elliptic-examples-intro}
The space $\calP_M(X,1,n)$ for $n=0,1,2$ is given by
\begin{align}
  \nonumber
  \calP_M(X,1,0) &= \CP^1, &
  \calP_M(X,1,1) &= (\CP^1)^2 - g(X), &
  \calP_M(X,1,2) &= (\CP^1)^3 - f(X),
\end{align}
where $g: X \rightarrow (\CP^1)^2$ and
$f:X \rightarrow (\CP^1)^3$ are
holomorphic embeddings defined in Sections
\ref{sec:pm-x-1-1} and \ref{sec:pm-x-1-2}.
\end{theorem}

We also generalize the embedding result of Theorem
\ref{theorem:cp1-hecke-embedding-intro} to the case of elliptic
curves:

\begin{theorem}
\label{theorem:elliptic-hecke-embedding-intro}
There is a canonical open embedding
$\calP_M(X,m,n) \rightarrow M^s(X,m+n)$.
\end{theorem}

Here the complex manifold $M^s(X,m+n)$ is the moduli space of stable
rank 2 parabolic bundles on $X$ with trivial determinant bundle and
$m+n$ marked points.
In order to prove Theorems
\ref{theorem:elliptic-isom-h-intro},
\ref{theorem:elliptic-examples-intro}, and
\ref{theorem:elliptic-hecke-embedding-intro},
we construct a list of all possible Hecke modifications of all
possible rank 2 vector bundles on an elliptic curve $X$.

We conclude by discussing possible applications of
our results to the problem of generalizing symplectic Khovanov homology
to lens spaces.
We observe that the embedding results of Theorems
\ref{theorem:cp1-hecke-embedding-intro} and
\ref{theorem:elliptic-hecke-embedding-intro}
could be related to a conjectural
spectral sequence from symplectic Khovanov homology to symplectic
instanton homology, which would generalize a spectral sequence due to
Kronheimer and Mrowka from Khovanov homology to singular instanton
homology \cite{Kronheimer-3}.
Based on such considerations, we make the following conjectures:

\begin{conjecture}
The space $\calP_M(C,3,2r)$ is the correct generalization of the
Seidel--Smith space $\calY(S^2,2r)$ to a curve $C$ of arbitrary genus.
\end{conjecture}

\begin{conjecture}
Given a curve $C$ of arbitrary genus, there is a canonical open
embedding $\calP_M(C,m,n) \rightarrow M^s(C,m+n)$.
\end{conjecture}

The paper is organized as follows.
In Section
\ref{sec:terminology}
we introduce some terminology specific to this paper.
In Section
\ref{sec:hecke-modifications}
we review the relevant background material on Hecke
modifications, discuss the relationship between Hecke modifications
and parabolic bundles, and define moduli spaces of marked parabolic
bundles $\calP_M^{tot}(C,m,n)$ and $\calP_M(C,m,n)$.
In Section
\ref{sec:rational-curves}
we describe Hecke modifications of rank 2 vector bundles on rational
curves and show that the Seidel--Smith space $\calY(S^2,2r)$ can be
reinterpreted as the space of marked parabolic bundles
$\calP_M(\CP^1,2r) = \calP_M(\CP^1,3,2r)$.
In Section
\ref{sec:elliptic-curves}
we describe Hecke modifications of rank 2 vector bundles on elliptic
curves and discuss possible elliptic-curve generalizations of the
Seidel--Smith space.
In Section
\ref{sec:applications}
we consider possible applications of our results to topology.
In Appendix
\ref{sec:vector-bundles} and
\ref{sec:parabolic-bundles}
we review the material we will need regarding vector bundles and
parabolic bundles on curves.

\section{Terminology}
\label{sec:terminology}

Here we introduce some terminology specific to this paper.

\begin{definition}
\label{def:ideg}
Given a rank 2 holomorphic vector bundle $E$ over a curve $C$, we
define the \emph{instability degree} of $E$ to be
$\deg L - \deg E/L$, where $L \subset E$ is a line subbundle of
maximal degree.
\end{definition}

The degree of the proper subbundles of a vector bundle $E$ on a curve
$C$ is bounded above (see for example \cite[Lemma 3.21]{Schaffhauser}),
so the notion of instability degree is well-defined.
The instability degree is positive for unstable bundles, 0 for
strictly semistable bundles, and negative for stable bundles.

\begin{definition}
\label{def:good-line}
Given a rank 2 holomorphic vector bundle $E$ over a curve $C$ and a
point $p \in C$, we say that a line
$\ell_{p} \in \mathbbm{P}(E_p)$ in the fiber $E_p$ over $p$
is {\em bad} if there is a line subbundle $L \subset E$ of maximal
degree such that $L_p = \ell_p$, and {\em good} otherwise.
\end{definition}

\begin{example}
For the trivial bundle $\calO \oplus \calO$ over $\CP^1$, all lines
are bad.
\end{example}

\begin{definition}
Given a rank 2 holomorphic vector bundle $E$ over a curve $C$ and
points $p_1, \cdots, p_n \in C$, we say that lines
$\ell_{p_i} \in \mathbbm{P}(E_{p_i})$ for $i=1, \cdots, n$ are
{\em bad in the same direction} if there is a line subbundle
$L \subset E$ of maximal degree such that
$L_{p_i} = \ell_{p_i}$ for $i=1,\cdots,n$.
\end{definition}

\section{Hecke modifications and parabolic bundles}
\label{sec:hecke-modifications}

\subsection{Hecke modifications at a single point}
\label{sec:cp1-single-hecke}

A fundamental concept for us is the notion of a Hecke modification of
a rank 2 holomorphic vector bundle.
This notion is described in \cite{Kamnitzer,Kapustin-Witten}.
Here we consider the case of a single Hecke modification.

\begin{definition}
\label{def:single-hecke}

Let $\pi_E:E \rightarrow C$ be a rank 2 holomorphic vector bundle over
a curve $C$.
A {\em Hecke modification} $E \hmod{\alpha}{p} F$ of $E$ at a point
$p \in C$ is a rank 2 holomorphic vector bundle $\pi_F:F\rightarrow C$
and a bundle map $\alpha:F \rightarrow E$ that satisfies the
following two conditions:
\begin{enumerate}
\item
  The induced maps on fibers $\alpha_q:F_q \rightarrow E_q$ are
  isomorphisms for all points $q \in C$ such that $q \neq p$.

\item
  We also impose a condition on the behavior of $\alpha$ near $p$.
  We require that there is an open neighborhood $U \subset C$ of $p$,
  local coordinates $\xi:U \rightarrow V$ for $V \subset \Complex$
  such that $\xi(p) = 0$,
  and local trivializations
  $\psi_E:\pi_E^{-1}(U) \rightarrow U \times \Complex^2$ and
  $\psi_F:\pi_F^{-1}(U) \rightarrow U \times \Complex^2$ of
  $E$ and $F$ over $U$ such that the following diagram commutes:
  \begin{eqnarray}
    \nonumber
    \begin{tikzcd}
      \pi_E^{-1}(U) \arrow{d}{\psi_E}[swap]{\cong} &
      \pi_F^{-1}(U) \arrow{d}{\psi_F}[swap]{\cong}
      \arrow{l}[swap]{\alpha} \\
      U \times \Complex^2 &
      U \times \Complex^2, \arrow{l}
    \end{tikzcd}
  \end{eqnarray}
  where the bottom horizontal arrow is
  \begin{align}
    \nonumber
    (\psi_E \circ \alpha \circ \psi_F^{-1})(q, v) =
    (q,\bar{\alpha}(\xi(q))v)
  \end{align}
  and $\bar{\alpha}:V \rightarrow M(2,\Complex)$ has the form
  \begin{align}
    \nonumber
    \bar{\alpha}(z) = \left(\begin{array}{cc}
      1 & 0 \\
      0 & z
    \end{array}
    \right).
  \end{align}
\end{enumerate}
\end{definition}

It follows directly from Definition \ref{def:single-hecke} that
$\det F = (\det E) \otimes \calO(-p)$ and
$\deg F = \deg E - 1$.
There is a natural notion of equivalence of Hecke modifications:

\begin{definition}
\label{def:hecke-equiv}
We say that two Hecke modifications
$E \hmod{\alpha}{p} F$ and
$E \hmod{\alpha'}{p} F'$
of $E$ at a point $p \in C$ are {\em equivalent} if there is an
isomorphism $\phi:F \rightarrow F'$ such that
$\alpha = \alpha' \circ \phi$.
\end{definition}

\begin{definition}
\label{def:Htot}
We define the \emph{total space of Hecke modifications}
$\calH^{tot}(C,E;p)$ to be the set of equivalence classes of Hecke
modifications of a rank 2 vector bundle $\pi_E:E\rightarrow C$ at a
point $p \in C$.
\end{definition}

As is well-known, the set $\calH^{tot}(C,E;p)$ naturally has the
structure of a complex manifold that is (noncanonically) isomorphic to
$\CP^1$.
A canonical version of this statement is:

\begin{theorem}
\label{theorem:single-isomorphism}
There is a canonical isomorphism
$\calH^{tot}(C,E;p) \rightarrow \mathbbm{P}(E_p)$,
$[E \hmod{\alpha}{p} F] \mapsto \im(\alpha_p:F_p \rightarrow E_p)$.
\end{theorem}

It is also useful to think about Hecke modifications in terms of
sheaves of sections.
Consider a rank 2 vector bundle $E$ and a line
$\ell_p \in \mathbbm{P}(E_p)$.
Let $\calE$ be the sheaf of sections of $E$, and define a subsheaf
$\calF$ of $\calE$ whose set of sections over an open set
$U \subset C$ is given by
\begin{align}
  \nonumber
  \calF(U) = \{s \in \calE(U) \mid
  p \in U \implies s(p) \in \ell_p\}.
\end{align}
Define $F$ to be the vector bundle whose sheaf of sections is $\calF$,
and define $\alpha:F \rightarrow E$ to be the bundle map corresponding
to the inclusion of sheaves $\calF \rightarrow \calE$.
Then $[E \hmod{\alpha}{p} F] \in \calH^{tot}(C,E;p)$ corresponds to
$\ell_p \in \mathbbm{P}(E_p)$ under the isomorphism described in
Theorem \ref{theorem:single-isomorphism}.
We have an exact sequence of sheaves
\begin{eqnarray}
  \nonumber
  \begin{tikzcd}
    0 \arrow{r} &
    {\mathcal F} \arrow{r} &
    {\mathcal E} \arrow{r} &
    \Complex_p \arrow{r} &
    0,
  \end{tikzcd}
\end{eqnarray}
where $\Complex_p$ is a skyscraper sheaf supported at the point $p$.
It is important to note, however, that the usual notion of equivalence
of extensions differs from the notion of equivalence of Hecke
modifications given in Definition \ref{def:hecke-equiv}.

\subsection{Sequences of Hecke modifications}
\label{sec:sequences-hecke}

We would now like to generalize the notion of a Hecke modification
of a vector bundle at a single point $p \in C$ to the notion of a
sequence of Hecke modifications at distinct points
$(p_1, \cdots, p_n) \in C^n$.

\begin{definition}
Let $\pi_E:E \rightarrow C$ be a rank 2 holomorphic vector bundle over
a curve $C$.
A {\em sequence of Hecke modifications}
$E
\hmod{\alpha_1}{p_1} E_1
\hmod{\alpha_2}{p_2} \cdots
\hmod{\alpha_n}{p_n} E_n$
of $E$ at distinct points $(p_1, p_2, \cdots, p_n) \in C^n$
is a collection of rank 2 holomorphic vector bundles
$\pi_{E_i}:E_i \rightarrow C$ and Hecke modifications
$E_{i-1} \hmod{\alpha_i}{p_i} E_i$ for $i=1,2,\cdots,n$,
where $E_0 := E$.
\end{definition}

\begin{definition}
Two sequences of Hecke modifications
$E
\hmod{\alpha_1}{p_1} E_1
\hmod{\alpha_2}{p_2} \cdots
\hmod{\alpha_n}{p_n} E_n$
and
$E
\hmod{\alpha_1'}{p_1} E_1'
\hmod{\alpha_2'}{p_2} \cdots
\hmod{\alpha_n'}{p_n} E_n'$
are {\em equivalent} if
there are isomorphisms $\phi_i:E_i \rightarrow E_i'$ such that the
following diagram commutes:
\begin{eqnarray}
  \nonumber
  \begin{tikzcd}
    E \arrow{d}[swap]{=} &
    E_1 \arrow{l}[swap]{\alpha_1}\arrow{d}{\phi_1}[swap]{\cong} &
    E_2 \arrow{l}[swap]{\alpha_2}\arrow{d}{\phi_2}[swap]{\cong} &
    \cdots \arrow{l}[swap]{\alpha_3} &
    E_n \arrow{l}[swap]{\alpha_n}\arrow{d}{\phi_n}[swap]{\cong} \\
    E &
    E_1' \arrow{l}[swap]{\alpha_1'} &
    E_2' \arrow{l}[swap]{\alpha_2'} &
    \cdots \arrow{l}[swap]{\alpha_3'} &
    E_n'. \arrow{l}[swap]{\alpha_n'}
  \end{tikzcd}
\end{eqnarray}
\end{definition}

\begin{definition}
\label{def:space-h-tot}
We define the \emph{total space of Hecke modifications
$\calH^{tot}(C,E;p_1, \cdots,p_n)$} to be the set of equivalence
classes of sequences of Hecke modifications of the rank 2
vector bundle $\pi_E:E \rightarrow C$ at points
$(p_1, \cdots, p_n) \in C^n$.
For simplicity, we will often suppress the dependence on
$p_1, \cdots, p_n$ and denote this space as
$\calH^{tot}(C,E,n)$.
\end{definition}

\begin{definition}
\label{def:iso-hecke}
We say that an isomorphism of vector bundles
$\phi:E \rightarrow E'$ is an \emph{isomorphism of equivalence classes
  of sequences of Hecke modifications}
$\phi:[E
  \hmod{\alpha_1}{p_1} E_1
  \hmod{\alpha_2}{p_2} \cdots
  \hmod{\alpha_n}{p_n} E_n] \rightarrow
[E'
  \hmod{\alpha_1'}{p_1} E_1'
  \hmod{\alpha_2'}{p_2} \cdots
  \hmod{\alpha_n'}{p_n} E_n']$
if
\begin{align}
  \nonumber
  [E'
  \hmod{\beta_1}{p_1} E_1
  \hmod{\alpha_2}{p_2} \cdots
  \hmod{\alpha_n}{p_n} E_n] =
  [E'
  \hmod{\alpha_1'}{p_1} E_1'
  \hmod{\alpha_2'}{p_2} \cdots
  \hmod{\alpha_n'}{p_n} E_n'],
\end{align}
where $\beta_1 := \phi \circ \alpha_1$, or equivalently, if there are
isomorphisms $\phi_i:E_i \rightarrow E_i'$ such that the following
diagram commutes:
\begin{eqnarray}
  \nonumber
  \begin{tikzcd}
    E \arrow{d}{\phi}[swap]{\cong} &
    E_1 \arrow{l}[swap]{\alpha_1}\arrow{d}{\phi_1}[swap]{\cong} &
    E_2 \arrow{l}[swap]{\alpha_2}\arrow{d}{\phi_2}[swap]{\cong} &
    \cdots \arrow{l}[swap]{\alpha_3} &
    E_n \arrow{l}[swap]{\alpha_n}\arrow{d}{\phi_n}[swap]{\cong} \\
    E' &
    E_1' \arrow{l}[swap]{\alpha_1'} &
    E_2' \arrow{l}[swap]{\alpha_2'} &
    \cdots \arrow{l}[swap]{\alpha_3'} &
    E_n'. \arrow{l}[swap]{\alpha_n'}
  \end{tikzcd}
\end{eqnarray}
\end{definition}

In what follows, it will be useful to reinterpret equivalence
classes of sequences of Hecke modifications in terms of parabolic
bundles.
The relevant background material on parabolic bundles is discussed in
Appendix \ref{sec:parabolic-bundles}.
For our purposes here, a rank 2 parabolic bundle over a curve $C$
consists of a rank 2 holomorphic vector bundle
$\pi_E:E \rightarrow C$, a parameter $\mu > 0$, called the
\emph{weight}, and a choice of line
$\ell_{p_i} \in \mathbbm{P}(E_{p_i})$ in the fiber
$E_{p_i} = \pi_E^{-1}(p_i)$ over the point $p_i \in C$ for a
finite number of distinct points $(p_1, \cdots, p_n) \in C^n$.
The data of just the marked points and lines, without the weight, is
referred to as a \emph{quasi-parabolic structure} on $E$.
The additional data of the weight allows us to define the notions of
stable, semistable, and unstable parabolic bundles.

\begin{definition}
We define
$\calP^{tot}(C,E;p_1,\cdots,p_n) =
\mathbbm{P}(E_{p_1}) \times \cdots \times \mathbbm{P}(E_{p_n})$
to be the set of all quasi-parabolic structures with marked points
$(p_1, \cdots, p_n) \in C^n$ on a rank 2 holomorphic vector bundle
$\pi_E:E \rightarrow C$.
For simplicity, we will often suppress the dependence on
$p_1, \cdots, p_n$ and denote this space as $\calP^{tot}(C,E,n)$.
\end{definition}

Since $\mathbbm{P}(E_{p_i})$ is (noncanonically) isomorphic to
$\CP^1$, the set $\calP^{tot}(C,E;p_1,\cdots,p_n)$ naturally has the
structure of a complex manifold that is (noncanonically) isomorphic to
$(\CP^1)^n$.
We have the following generalization of Theorem
\ref{theorem:single-isomorphism}, which allows us to reinterpret Hecke
modifications in terms of parabolic bundles:

\begin{theorem}
\label{theorem:canonical-isomorphism}
There is a canonical isomorphism
$\calH^{tot}(C,E;p_1,\cdots,p_n) \rightarrow
\calP^{tot}(C,E;p_1,\cdots,p_n)$ given by
\begin{align}
  \nonumber
  [E
  \hmod{\alpha_1}{p_1} E_1
  \hmod{\alpha_2}{p_2} \cdots
  \hmod{\alpha_n}{p_n} E_n] \mapsto
  (E,\ell_{p_1},\cdots,\ell_{p_n}),
\end{align}
where
$\ell_{p_i} :=
\im ((\alpha_1 \circ \cdots \circ \alpha_i)_{p_i}:
(E_i)_{p_i} \rightarrow E_{p_i})$.
\end{theorem}

Under our reinterpretation of Hecke modifications in terms of
parabolic bundles, an isomorphism of equivalence classes of sequences
of Hecke modifications corresponds to an isomorphism of parabolic
bundles:

\begin{definition}
\label{def:iso-parabolic}
We say that an isomorphism of vector bundles $\phi:E \rightarrow E'$
is an \emph{isomorphism of parabolic bundles}
$\phi:(E,\ell_{p_1},\cdots,\ell_{p_n}) \rightarrow
(E',\ell_{p_1}',\cdots,\ell_{p_n}')$ if
$\phi(\ell_{p_i}) = \ell_{p_i}'$ for $i=1,\cdots,n$.
\end{definition}

\begin{theorem}
\label{theorem:parabolic-iso-iff-sequence-iso}
Two equivalence classes of sequences of Hecke
modifications are isomorphic if and only if their corresponding
parabolic bundles are isomorphic.
\end{theorem}

\begin{proof}
This is a direct consequence of Theorem
\ref{theorem:canonical-isomorphism} and Definitions
\ref{def:iso-hecke} and \ref{def:iso-parabolic}.
\end{proof}

For many applications, given an equivalence class
$[E
  \hmod{\alpha_1}{p_1} E_1
  \hmod{\alpha_2}{p_2} \cdots
  \hmod{\alpha_n}{p_n} E_n]$ of
sequences of Hecke modifications we will be interested only in the
isomorphism class of the terminal vector bundle $E_n$, and it is useful
to have a means of extracting this information from the corresponding
parabolic bundle $(E,\ell_{p_1}, \cdots, \ell_{p_n})$:

\begin{definition}
\label{def:hecke-transform}
Let $(E,\ell_{p_1},\cdots,\ell_{p_n})$ be a parabolic bundle over a
curve $C$.
We define the \emph{Hecke transform}
$H(E,\ell_{p_1},\cdots,\ell_{p_n})$ of $E$ to be the vector bundle $F$
that is constructed as follows.
Let $\calE$ be the sheaf of sections of $E$.
Define a subsheaf $\calF$ of $\calE$ whose set of sections over an
open set $U \subset C$ is given by
\begin{align}
  \nonumber
  \calF(U) = \{s \in \calE(U) \mid
  \textup{$p_i \in U \implies s(p_i) \in \ell_{p_i}$ for
    $i = 1,\cdots, n$}\}.
\end{align}
Now define $F$ to be the vector bundle whose sheaf of sections is
$\calF$.
\end{definition}

In particular, $H(E,\ell_{p_1},\cdots,\ell_{p_n})$ is isomorphic to
$E_n$.
We will often want to pick out an open subset of $\calP^{tot}(C,E,n)$
by using the Hecke transform to impose a semistability condition:

\begin{definition}
Given a rank 2 holomorphic vector bundle $E$ over a curve $C$ and
distinct points $(p_1,\cdots,p_n) \in C^n$, we define
$\calP(C,E,n)$ to be the subset of $\calP^{tot}(C,E,n)$
consisting of parabolic bundles $(E,\ell_{p_1},\cdots,\ell_{p_n})$
such that $H(E,\ell_{p_1},\cdots,\ell_{p_n})$ is semistable:
\begin{align}
  \nonumber
  \calP(C,E,n) =
  \{(E,\ell_{p_1}, \cdots, \ell_{p_n}) \in \calP^{tot}(C,E,n) \mid
  \textup{$H(E,\ell_{p_1},\cdots, \ell_{p_n})$ is semistable}\}.
\end{align}
For simplicity, we are suppressing the dependence of
$\calP(C,m,n)$ on $p_1, \cdots, p_n$ in the notation.
\end{definition}

\begin{theorem}
The set $\calP(C,E,n)$ is an open submanifold of $\calP^{tot}(C,E,n)$.
\end{theorem}

\begin{proof}
This follows from the fact that semistability is an open condition.
\end{proof}

We have generalized the notion of a Hecke modification to the
case of multiple points $p_1, \cdots, p_n$ by considering sequences of
Hecke modifications, for which the points must be ordered.
For most of our purposes we could equally well use an
alternative generalization, described in \cite{Kamnitzer}, for which
the the points need not be ordered.
Though we will not use it here, we briefly describe this alternative
generalization and show how it relates to parabolic bundles:

\begin{definition}
Let $\pi_E:E \rightarrow C$ be a rank 2 holomorphic vector bundle over
a curve $C$.
A \emph{simultaneous Hecke modification}
$E \hmod{\alpha}{\{p_1,\cdots,p_n\}} F$
of $E$ at a set of distinct points $\{p_1, p_2, \cdots, p_n\} \subset C$
is a rank 2 holomorphic vector bundle
$\pi_F:F \rightarrow C$ and a bundle map
$\alpha:F \rightarrow E$ that satisfies the following two conditions:
\begin{enumerate}
\item
The induced map on fibers $\alpha_q:E_q \rightarrow F_q$ is an
isomorphism for all points $q \notin \{p_1, \cdots, p_n\}$.

\item
Condition (2) of Definition \ref{def:single-hecke}, which constrains
the local behavior of $\alpha$ near a Hecke-modification point, holds
at each of the points $p_1, \cdots, p_n$.
\end{enumerate}
\end{definition}

\begin{definition}
Two simultaneous Hecke modifications
$E \hmod{\alpha}{\{p_1,\cdots,p_n\}} F$ and
$E \hmod{\alpha'}{\{p_1,\cdots,p_n\}} F'$ are \emph{equivalent} if
there is an isomorphism $\phi:F \rightarrow F'$ such that
$\alpha = \alpha' \circ \phi$.
\end{definition}

\begin{definition}
We define the \emph{total space of simultaneous Hecke modifications}
$\overline{\calH}^{tot}(C,E,\{p_1, \cdots, p_n\})$ to be the set of
equivalence classes of simultaneous Hecke modifications of the rank 2
vector bundle $\pi_E:E \rightarrow C$ at the set of points
$\{p_1, \cdots, p_n\} \subset C$.
For simplicity, we will often suppress the dependence on
$\{p_1, \cdots, p_n\}$ and denote this space as
$\overline{\calH}^{tot}(C,E,n)$.
\end{definition}

We can define a set of parabolic bundles
$\overline{\calP}^{tot}(C,E,n)$ for which the marked points are unordered.
We can define an isomorphism
$\overline{\calH}^{tot}(C,E,n) \rightarrow
\overline{\calP}^{tot}(C,E,n)$ by
\begin{align}
  \nonumber
  [E \hmod{\alpha}{\{p_1,\cdots,p_n\}} F] \mapsto
  (E,\ell_{p_1},\cdots,\ell_{p_n}),
\end{align}
where $\ell_{p_i} = \im(\alpha_{p_i}:F_{p_i} \rightarrow E_{p_i})$.
Since the Hecke transform does not depend on the ordering of the points,
it is well-defined on parabolic bundles in
$\overline{\calP}^{tot}(C,E,n)$, and we have that
$H(E,\ell_{p_1},\cdots,\ell_{p_n})$ is isomorphic to $F$.

\subsection{Moduli spaces of marked parabolic bundles}
\label{sec:marked-parabolic}

So far we have considered spaces of isomorphism classes of sequences
of Hecke modifications in which the initial vector bundle in the
sequence is held fixed.
But in what follows we will want to generalize these spaces so
the initial vector bundle is allowed to range over the
isomorphism classes in a moduli space of vector bundles.
Translating into the language of parabolic bundles, such spaces are
equivalent to spaces of isomorphism classes of parabolic bundles in
which the underlying vector bundles are allowed to range over the
isomorphism classes in a moduli space of vector bundles.
However, there is a problem with defining such spaces arising from
the fact that vector bundles may have nontrivial automorphisms.

To illustrate the problem, consider the space
$\calP^{tot}(\CP^1,\calO \oplus \calO;p_1)$, which is (noncanonically)
isomorphic to $\CP^1$.
We might want to reinterpret this space as a moduli space of
isomorphism classes of parabolic bundles of the form
$(E,\ell_{p_1})$ for $[E] \in M^{ss}(\CP^1)$, where $M^{ss}(\CP^1)$,
the moduli space of semistable rank 2 vector bundles over $\CP^1$ with
trivial determinant bundle, consists of the single point
$[\calO \oplus \calO]$.
But $\Aut(\calO \oplus \calO) = GL(2,\Complex)$, and for any pair of
parabolic bundles
$(\calO \oplus \calO, \ell_{p_1})$ and
$(\calO \oplus \calO, \ell_{p_1}')$
there is an automorphism $\phi \in \Aut(\calO \oplus \calO)$ such that
$\phi(\ell_{p_1}) = \ell_{p_1}'$.
It follows that all parabolic bundles of the form
$(\calO \oplus \calO, \ell_{p_1})$ are isomorphic and our proposed
moduli space collapses to a point, when what we wanted was $\CP^1$.

To remedy the problem, we will add marking data to eliminate
the nontrivial automorphisms.
In particular, since stable parabolic bundles have no nontrivial
automorphisms, we make the following definition:

\begin{definition}
\label{def:PMtot}
Given a curve $C$ and distinct points
$(q_1, \cdots, q_m, p_1, \cdots, p_n) \in C^{m+n}$, we define the
\emph{total space of marked parabolic bundles $\calP_M^{tot}(C,m,n)$}
to be the set of isomorphism classes of parabolic bundles of the form
$(E,\ell_{q_1}, \cdots, \ell_{q_m}, \ell_{p_1}, \cdots, \ell_{p_n})$ such
that $[E,\ell_{q_1},\cdots, \ell_{q_m}] \in M^s(C,m)$:
\begin{align}
  \nonumber
  \calP_M^{tot}(C,m,n) =
  \{[E,\ell_{q_1}, \cdots, \ell_{q_m},
    \ell_{p_1}, \cdots, \ell_{p_n}] \mid
  [E,\ell_{q_1},\cdots, \ell_{q_m}] \in M^s(C,m)\}.
\end{align}
For simplicity, we are suppressing the dependence of
$\calP_M^{tot}(C,m,n)$ on
$q_1, \cdots, q_m,p_1, \cdots, p_n$ in the notation.
\end{definition}

Here the complex manifold $M^s(C,m)$ is the moduli space of stable
rank 2 parabolic bundles over $C$ with trivial determinant bundle and
$m$ marked points.
We will refer to the lines $\ell_{q_1}, \cdots, \ell_{q_m}$ as
\emph{marking lines}, since their purpose is to add additional
structure to $E$ so as to eliminate nontrivial automorphisms.
We will refer to the lines $\ell_{p_1},\cdots,\ell_{p_n}$ as
\emph{Hecke lines}, since their purpose is to parameterize
Hecke modifications at the points $p_1, \cdots, p_n$.
Because we have defined $\calP_M^{tot}(C,m,n)$ in terms of stable
parabolic bundles, which have no nontrivial automorphisms, the
collapsing phenomenon described above does not occur, and we have the
following result:

\begin{theorem}
\label{theorem:PMtot}
The set $\calP_M^{tot}(C,m,n)$ naturally has the structure of a
complex manifold isomorphic to a $(\CP^1)^n$-bundle over $M^s(C,m)$.
\end{theorem}

The base manifold $M^s(C,m)$ constitutes the moduli space over which
the isomorphism classes of vector bundles with marking data range, and
the $(\CP^1)^n$ fibers correspond to a space of Hecke modifications
$\CP^1$ for each of the points $p_1, \cdots, p_n$.
We will prove Theorem \ref{theorem:PMtot} by constructing
$\calP_M^{tot}(C,m,n)$ from a universal $\CP^1$-bundle, which we first
describe for the case $m=0$:

\begin{lemma}
\label{lemma:cp1-ubundle}
There is a universal $\CP^1$-bundle
$P \rightarrow C \times M^s(C)$, which has the the property that
for any complex manifold $S$ and any $\CP^1$-bundle
$Q \rightarrow C \times S$, the bundle
$Q$ is isomorphic to the pullback of $P$ along $1_C \times f_Q$ for
a unique map $f_Q:S \rightarrow M^s(C)$.
\end{lemma}

Lemma \ref{lemma:cp1-ubundle} is proven in \cite{Biswas}.
One way to understand this result is as follows.
Let $M_{r,d}^s(C)$ denote the moduli space of stable vector bundles of
rank $r$ and degree $d$ on a curve $C$.
As discussed in \cite{Hoffmann}, one can define a corresponding moduli
stack $\Bun_{r,d}^s(C)$ and a
$\mathbbm{G}_m$-gerbe
$\pi:\Bun_{r,d}^s(C) \rightarrow \Hom(-,M_{r,d}^s(C))$; this is a
morphism of stacks for which all the fibers are isomorphic to
$B\mathbbm{G}_m$.
The stack $\Hom(-,C) \times \Bun_{r,d}^s(C)$ carries a
universal rank 2 vector bundle $\calE$.
One can show 
(see \cite[Corollary 3.12]{Heinloth}) that if $\gcd(r,d) = 1$ then
$M_{r,d}^s(C)$ is a fine moduli space and $\calE$ descends to a
universal vector bundle $E \rightarrow C \times M_{r,d}^s(C)$, which
can be viewed as a generalization of the Poincar\'{e} line bundle
$L \rightarrow C \times \Jac(C)$ for the case $r=1$, $d=0$.
By projectivizing $E$, we also get a universal $\CP^1$-bundle
$\mathbbm{P}(E) \rightarrow C \times M_{r,d}^s(C)$.
If $\gcd(r,d) \neq 1$, then $M_{r,d}^s(C)$ is not a fine moduli space
and $C \times M_{r,d}^s(C)$ does not carry a universal vector bundle.
It is still possible, however, to use $\calE$ to construct a
universal $\CP^{r-1}$-bundle $P \rightarrow C \times M_{r,d}^s(C)$,
only now this $\CP^{r-1}$-bundle is not the projectivization of a
universal vector bundle.
One way to make this result plausible is to note that whereas a stable
vector bundle has automorphism group $\Complex^\times$, consisting of
automorphisms that scale the fibers by a constant factor, the
projectivization of a stable vector bundle has trivial automorphism
group, consisting of just the identity automorphism.

Similar results hold for moduli spaces of stable vector bundles for
which the determinant bundle is a fixed line bundle.
In particular, the space $M^s(C)$ of stable rank 2 vector bundles with
trivial determinant bundle is not a fine moduli space and
$C \times M^s(C)$ does not carry a universal vector bundle;
nonetheless, it does carry a universal $\CP^1$-bundle
$P \rightarrow C \times M^s(C)$.
We will use this universal $\CP^1$-bundle to construct
$\calP_M^{tot}(C,m,n)$ for the case $m=0$:

\begin{proof}[Proof of Theorem \ref{theorem:PMtot}]
First consider the case $m=0$, and note that $M^s(C,0) = M^s(C)$.
Given a point $p \in C$, let $P(p) \rightarrow M^s(C)$ denote the
pullback of the universal $\CP^1$-bundle
$P \rightarrow C \times M^s(C)$ described in Lemma
\ref{lemma:cp1-ubundle} along the inclusion
$M^s(C) \rightarrow C \times M^s(C)$, $[E] \mapsto (p,[E])$.
Given distinct points $(p_1, \cdots, p_n) \in C^n$, we can pull back the
$(\CP^1)^n$-bundle
$P(p_1) \times \cdots \times P(p_n) \rightarrow (M^s(C))^n$
along the diagonal map
$M^s(C) \rightarrow (M^s(C))^n$, $[E] \mapsto ([E], \cdots, [E])$
to obtain $\calP_M^{tot}(C,0,n)$.

The proof for $m > 0$ is the same.
One can define a moduli stack corresponding to $M^s(C,m)$ that
carries a universal rank 2 parabolic bundle \cite{Hoffmann}.
Using a numerical condition analogous to the condition
$\gcd(r,d) = 1$ for vector bundles (see \cite[Example 5.7]{Hoffmann}
and \cite[Proposition 3.2]{Boden}), one can show that $M^s(C,m)$ is a
fine moduli space for $m > 0$ and the universal parabolic bundle on
the moduli stack descends to a universal parabolic bundle on
$C \times M^s(C,m)$.
We can projectivize this latter bundle and use it to construct
$\calP_M^{tot}(C,m,n)$ in the same manner as for the $m=0$ case.
\end{proof}

We will often want to pick out an open subset of
$\calP_M^{tot}(C,m,n)$ by imposing a semistability condition:

\begin{definition}
\label{def:PM}
Given a curve $C$ and distinct points
$(q_1,\cdots,q_m,p_1,\cdots,p_n) \in C^{m+n}$, we define
$\calP_M(C,m,n)$ to be the subset of $\calP_M^{tot}(C,m,n)$
consisting of points
$[E,\ell_{q_1},\cdots,\ell_{q_m},\ell_{p_1},\cdots,\ell_{p_n}]$
such that
$H(E,\ell_{p_1},\cdots,\ell_{p_n})$ is semistable:
\begin{align}
  \nonumber
  \calP_M(C,m,n) =
  \{[E,\ell_{q_1}, \cdots, \ell_{q_m},
    \ell_{p_1}, \cdots, \ell_{p_n}] \in \calP_M^{tot}(C,m,n) \mid
  \textup{$H(E,\ell_{p_1},\cdots, \ell_{p_n})$ is semistable}\}.
\end{align}
For simplicity, we are suppressing the dependence of
$\calP_M(C,m,n)$ on
$q_1, \cdots, q_m,p_1, \cdots, p_n$ in the notation.
\end{definition}

\begin{theorem}
\label{theorem:PM}
The set $\calP_M(C,m,n)$ is an open submanifold of
$\calP_M^{tot}(C,m,n)$.
\end{theorem}

\begin{proof}
This follows from the fact that semistability is an open condition.
\end{proof}

We can interpret marked parabolic bundles in terms of Hecke
modifications as follows:

\begin{definition}
We define a
\emph{sequence of Hecke modifications of a parabolic bundle}
$(E,\ell_{q_1},\cdots,\ell_{q_m})$
to be a sequence of Hecke modifications of the underlying vector
bundle $E$.
\end{definition}

\begin{definition}
We say that two sequences of Hecke modifications
\begin{align}
  \nonumber
  (E,\ell_{q_1},\cdots,\ell_{q_m})
  \hmod{\alpha_1}{p_1} E_1
  \hmod{\alpha_2}{p_2} \cdots
  \hmod{\alpha_n}{p_n} E_n, &&
  (E',\ell_{q_1}',\cdots,\ell_{q_m}')
  \hmod{\alpha_1'}{p_1} E_1'
  \hmod{\alpha_2'}{p_2} \cdots
  \hmod{\alpha_n'}{p_n} E_n'
\end{align}
are {\em equivalent} if
there are isomorphisms $\phi_i:E_i \rightarrow E_i'$ for
$i=0,\cdots,n$ such that the following diagram commutes:
\begin{eqnarray}
  \nonumber
  \begin{tikzcd}
    (E,\ell_{q_1},\cdots,\ell_{q_m}) \arrow{d}{\phi_0}[swap]{\cong} &
    E_1 \arrow{l}[swap]{\alpha_1}\arrow{d}{\phi_1}[swap]{\cong} &
    E_2 \arrow{l}[swap]{\alpha_2}\arrow{d}{\phi_2}[swap]{\cong} &
    \cdots \arrow{l}[swap]{\alpha_3} &
    E_n \arrow{l}[swap]{\alpha_n}\arrow{d}{\phi_n}[swap]{\cong} \\
    (E',\ell_{q_1}',\cdots,\ell_{q_m}') &
    E_1' \arrow{l}[swap]{\alpha_1'} &
    E_2' \arrow{l}[swap]{\alpha_2'} &
    \cdots \arrow{l}[swap]{\alpha_3'} &
    E_n'. \arrow{l}[swap]{\alpha_n'}
  \end{tikzcd}
\end{eqnarray}
\end{definition}

The space $\calP_M^{tot}(C,m,n)$ can then be interpreted as a moduli
space of equivalence classes of sequences of Hecke modifications of
parabolic bundles, and the space $\calP_M(C,m,n)$ can be
interpreted as the subspace $\calP_M^{tot}(C,m,n)$ consisting of
equivalence classes of sequences for which the terminal
vector bundles are semistable.
We will not use these interpretations here, since it is simpler to
work directly with the marked parabolic bundles.

\section{Rational curves}
\label{sec:rational-curves}

\subsection{Vector bundles on rational curves}
\label{sec:cp1-vector-bundles}

Grothendieck showed that all rank 2 holomorphic vector bundles on
(smooth projective) rational curves are decomposable
\cite{Grothendieck}; that is, they have the form
$\calO(n) \oplus \calO(m)$ for integers $n$ and $m$.
The instability degree of $\calO(n) \oplus \calO(m)$ is $|n - m|$, so
the bundle $\calO(n) \oplus \calO(m)$ is strictly semistable if
$n = m$ and unstable otherwise.
There are no stable rank 2 vector bundles on rational curves.

\subsection{List of all possible single Hecke modifications}
\label{sec:cp1-table-hecke}

Here we present a list of all possible Hecke modifications at a point
$p \in \CP^1$ of all possible rank 2 vector bundles on $\CP^1$.
We will parameterize Hecke modifications of a vector bundle $E$ at a
point $p$ in terms of lines $\ell_p \in \mathbbm{P}(E_p)$, as
described in Theorem \ref{theorem:single-isomorphism}.
Since we are always free to tensor a Hecke modification with a line
bundle, it suffices to consider vector bundles of nonnegative degree.

\begin{theorem}
\label{theorem:cp1-hecke-On-O}
Consider the vector bundle
$\calO(n) \oplus \calO$ for $n \geq 1$ (unstable, instability degree
$n$).
The possible Hecke modifications are
\begin{align}
  \nonumber
  \calO(n) \oplus \calO \leftarrow
  \left\{
  \begin{array}{ll}
    \calO(n) \oplus \calO(-1) &
    \quad \mbox{if $\ell_p = \calO(n)_p$ (a bad line),} \\
    \calO(n-1) \oplus \calO &
    \quad \mbox{otherwise (a good line).}
  \end{array}
  \right.
\end{align}
\end{theorem}

\begin{proof}
(1) The case $\ell_p = \calO(n)_p$.
A Hecke modification
$\alpha:\calO(n) \oplus \calO \rightarrow \calO(n) \oplus \calO(-1)$
corresponding to $\ell_p$ is
\begin{align}
  \nonumber
  \alpha = \left(\begin{array}{cc}
    1 & 0 \\
    0 & f
  \end{array}\right),
\end{align}
where $f:\calO(-1) \rightarrow \calO$ is the unique (up to rescaling
by a constant) nonzero morphism such that $f_p = 0$ on the fibers over
$p$.

(2) The case $\ell_p \neq \calO(n)_p$.
Since $n \geq 1$, we can choose a section $t$ of $\calO(n)$ such that
$t(p) \neq 0$.
Choose a section $s = (at, b)$ of $\calO(n) \oplus \calO$  for
$a, b \in \Complex$ such that $s(p) \neq 0$ and $s \in \ell_p$.
A Hecke modification
$\alpha:\calO(n) \oplus \calO \rightarrow \calO(n-1) \oplus \calO$
corresponding to $\ell_p$ is
\begin{align}
  \nonumber
  \alpha = \left(\begin{array}{cc}
    f & at \\
    0 & b
  \end{array}\right),
\end{align}
where $f:\calO(n-1) \rightarrow \calO(n)$ is the unique (up to
rescaling by a constant) nonzero morphism such that $f_p = 0$ on the
fibers over $p$.
\end{proof}

\begin{theorem}
\label{theorem:cp1-hecke-O-O}
Consider the vector bundle
$\calO \oplus \calO$ (strictly semistable, instability degree 0).
The possible Hecke modifications are
\begin{align}
  \nonumber
  \calO \oplus \calO \leftarrow
  \calO \oplus \calO(-1)\qquad
  \textup{for all $\ell_p$ (all lines are bad)}.
  \end{align}
\end{theorem}

\begin{proof}
Define a section $s = (a,b)$ of $\calO \oplus \calO$ for
$a,b \in \Complex$ such that $s(p) \neq 0$ and $s(p) \in \ell_p$.
If $b = 0$, then a Hecke modification
$\alpha:\calO \oplus \calO(-1) \rightarrow \calO \oplus \calO$
corresponding to $\ell_p$ is
\begin{align}
  \nonumber
  \alpha = \left(\begin{array}{cc}
    a & 0 \\
    0 & f
  \end{array}\right),
\end{align}
and if $b \neq 0$, then a Hecke modification
$\alpha:\calO \oplus \calO(-1) \rightarrow \calO \oplus \calO$
corresponding to $\ell_p$ is
\begin{align}
  \nonumber
  \alpha = \left(\begin{array}{cc}
    a & f \\
    b & 0
  \end{array}\right),
\end{align}
where $f:\calO(-1) \rightarrow \calO$ is the unique (up to rescaling
by a constant) nonzero morphism such that $f_p = 0$ on the fibers over
$p$.
\end{proof}

\subsubsection{Observations}

From this list, we make the following observations:

\begin{lemma}
\label{lemma:cp1-observations}
The following results hold for Hecke modifications of a rank 2 vector
bundle $E$ over $\CP^1$:
\begin{enumerate}
\item
  A Hecke modification of $E$ changes the instability degree by
  $\pm 1$.

\item
  Hecke modification of $E$ corresponding to a line
  $\ell_p \in \mathbbm{P}(E_p)$ changes the instability degree by $-1$
  if $\ell_p$ is a good line and $+1$ if $\ell_p$ is a bad line.

\item
  A generic Hecke modification of $E$ changes the instability degree
  by $-1$ unless $E$ has the minimum possible instability degree
  $0$, in which case all Hecke modifications of $E$ change the
  instability degree by $+1$.
\end{enumerate}
\end{lemma}

\subsection{Moduli spaces $\calP_M^{tot}(\CP^1,m,n)$ and
  $\calP_M(\CP^1,m,n)$}
\label{sec:cp1-moduli-space}

Our goal is to define a moduli space of Hecke modifications that is
isomorphic to the Seidel--Smith space $\calY(S^2,2r)$.
Kamnitzer showed that such a space can be defined as follows:

\begin{definition}[Kamnitzer \cite{Kamnitzer}]
Given distinct points $(p_1, \cdots, p_{2r}) \in (\CP^1)^{2r}$,
define the \emph{Kamnitzer space $\calH(\CP^1,2r)$} to be the subset of
$\calH^{tot}(\CP^1, \calO \oplus \calO, 2r)$
consisting of equivalence classes of sequences of Hecke modifications
$\calO \oplus \calO
\hmod{\alpha_1}{p_1} E_1
\hmod{\alpha_2}{p_2} \cdots
\hmod{\alpha_{2r}}{p_{2r}} E_{2r}$
such that $E_{2r}$ is semistable.
\end{definition}

In particular, the condition that $E_{2r}$ must be semistable implies
that $E_{2r} = \calO(-r) \oplus \calO(-r)$.

\begin{theorem}[Kamnitzer \cite{Kamnitzer}]
The Kamnitzer space $\calH(\CP^1,2r)$ has the structure of a complex
manifold isomorphic to the Seidel--Smith space $\calY(S^2,2r)$.
\end{theorem}

We will describe the Seidel--Smith space $\calY(S^2,2r)$ and
Kamnitzer's isomorphism in Section \ref{sec:seidel-smith}.
We can use the results of Section \ref{sec:sequences-hecke} to
reinterpret the Kamnitzer space $\calH(\CP^1,2r)$ in terms of
parabolic bundles:

\begin{definition}
\label{def:cp1-space-h}
Define
$\calP^{tot}(\CP^1,n) := \calP^{tot}(\CP^1,\calO \oplus \calO,n)$ and
$\calP(\CP^1,2r) := \calP(\CP^1,\calO \oplus \calO,2r)$.
\end{definition}

\begin{theorem}
\label{theorem:iso-kamnitzer-P}
There is a canonical isomorphism
$\calH(\CP^1,2r) \rightarrow \calP(\CP^1,2r)$.
\end{theorem}

\begin{proof}
This follows from restricting the domain and range of the canonical
isomorphism
$\calH^{tot}(\CP^1,\calO \oplus \calO,2r) \rightarrow
\calP^{tot}(\CP^1,\calO \oplus \calO,2r)$
described in Theorem \ref{theorem:canonical-isomorphism}.
\end{proof}

We can also reinterpret the spaces
$\calP^{tot}(\CP^1, n)$ and
$\calP(\CP^1,2r)$ in terms of the moduli spaces of marked parabolic
bundles that we defined in Section \ref{sec:marked-parabolic}.
In what follows, we will choose a global trivialization
$\calO \oplus \calO \rightarrow \CP^1 \times \Complex^2$ and identify
all the fibers of $\calO \oplus \calO$ with $\Complex^2$.
We can then identify lines $\ell_p \in \mathbbm{P}(E_p)$ with
points in $\CP^1$ and speak of lines in different fibers as being
equal or unequal.
In Appendix \ref{sec:vector-bundles} we define a moduli space
$M^{ss}(\CP^1)$ of semistable rank 2 vector bundles with trivial
determinant bundle, and in Appendix \ref{sec:parabolic-bundles} we
define a moduli space $M^s(\CP^1,m)$ of stable rank 2 parabolic
bundles with trivial determinant bundle and $m$ marked points.
From the fact that $M^{ss}(\CP^1) = \{[\calO \oplus \calO]\}$ and
$\Aut(\calO \oplus \calO) = GL(2,\Complex)$, we obtain the following
results:

\begin{theorem}
\label{theorem:Ms-cp1-3}
The moduli space
$M^s(\CP^1,3)$ consists of the single point
$[\calO \oplus \calO,\ell_{q_1},\ell_{q_2},\ell_{q_3}]$, where
$\ell_{q_1},\ell_{q_2},\ell_{q_3}$ are any three distinct lines.
Given any two stable parabolic bundles of the form
$(\calO \oplus \calO,\ell_{q_1},\ell_{q_2},\ell_{q_3})$ and
$(\calO \oplus \calO,\ell_{q_1}',\ell_{q_2}',\ell_{q_3}')$, there is a
unique (up to rescaling by a constant) automorphism
$\phi \in \Aut(\calO \oplus \calO)$ such that
$\phi(\ell_{q_i}) = \ell_{q_i}'$ for $i=1,2,3$.
\end{theorem}

\begin{corollary}
\label{cor:Ms-cp1-3-2}
There is an isomorphism
$M^s(\CP^1,3) \rightarrow M^{ss}(\CP^1)$,
$[\calO \oplus \calO,\ell_{q_1},\ell_{q_2},\ell_{q_3}] \mapsto
[\calO \oplus \calO]$.
\end{corollary}

These results motivate the following definitions of ``marked''
versions of $\calP^{tot}(\CP^1, n)$ and
$\calP(\CP^1, 2r)$:

\begin{definition}
Define $\calP_M^{tot}(\CP^1,n) := \calP_M^{tot}(\CP^1,3,n)$ and
$\calP_M(\CP^1,n) := \calP_M(\CP^1,3,n)$.
\end{definition}

The marked and unmarked versions of these spaces are easily seen to be
isomorphic:

\begin{theorem}
\label{theorem:cp1-parabolic-vb-isomorphism-a}
The spaces $\calP_M^{tot}(\CP^1,n)$ and $\calP^{tot}(\CP^1, n)$ are
(noncanonically) isomorphic.
\end{theorem}

\begin{proof}
Choose three distinct lines $\ell_{q_1}$, $\ell_{q_2}$, $\ell_{q_3}$,
and define an isomorphism
$\calP^{tot}(\CP^1,n) \rightarrow \calP_M^{tot}(\CP^1,n)$ by
\begin{align}
  \nonumber
  [\calO \oplus \calO, \ell_{p_1},\cdots,\ell_{p_n}] \mapsto
  [\calO \oplus \calO,\ell_{q_1},\ell_{q_2},\ell_{q_3},
    \ell_{p_1},\cdots,\ell_{p_n}].
\end{align}
The isomorphism is not canonical, since it depends on the choice of
lines $\ell_{q_1}$, $\ell_{q_2}$, $\ell_{q_3}$
\end{proof}

\begin{theorem}
\label{theorem:cp1-parabolic-vb-isomorphism}
The spaces $\calP_M(\CP^1,n)$ and $\calP(\CP^1,n)$ are
(noncanonically) isomorphic
\end{theorem}

\begin{proof}
This follows from restricting the domain and range of the isomorphism
$\calP^{tot}(\CP^1,n) \rightarrow \calP_M^{tot}(\CP^1,n)$
described in Theorem \ref{theorem:cp1-parabolic-vb-isomorphism-a}
\end{proof}

Our primary motivation for defining $\calP_M^{tot}(\CP^1,n)$ is
to draw a parallel with the case of elliptic curves, which we consider
in Section \ref{sec:elliptic-curves}.
But the space $\calP_M^{tot}(\CP^1,n)$ also has an advantage over
$\calP^{tot}(\CP^1,n)$ in that we can use the marking lines to render
certain constructions canonical.
For example, we can define a canonical version of the noncanonical
isomorphism
$\calP^{tot}(\CP^1,n) = \calP^{tot}(\CP^1,\calO \oplus \calO,n)
\rightarrow (\CP^1)^n$:

\begin{lemma}
\label{lemma:cp1-map-line}
Fix a parabolic bundle $(E,\ell_{q_1},\ell_{q_2},\ell_{q_3})$ such that
$[E,\ell_{q_1},\ell_{q_2},\ell_{q_3}] \in M^s(\CP^1,3)$ and a point
$p \in \CP^1$.
There is a canonical isomorphism
$\mathbbm{P}(E_p) \rightarrow M^{ss}(\CP^1,4) \cong \CP^1$ given by
\begin{align}
  \nonumber
  \ell_p \mapsto [E,\ell_{q_1},\ell_{q_2},\ell_{q_3},\ell_p].
\end{align}
\end{lemma}

\begin{proof}
This follows from the fact that
$(E,\ell_{q_1},\ell_{q_2},\ell_{q_3})$ is stable, so the
lines $\ell_{q_1},\ell_{q_2},\ell_{q_3}$ are all distinct under the
global trivialization of $E$.
\end{proof}

\begin{theorem}
\label{theorem:Pmtot-iso}
There is a canonical isomorphism
$h:\calP_M^{tot}(\CP^1,n) \rightarrow (M^{ss}(\CP^1,4))^n$.
\end{theorem}

\begin{proof}
Define maps
$h_i:\calP_M^{tot}(\CP^1,n) \rightarrow M^{ss}(\CP^1,4)$ for
$i=1,\cdots,n$ by
\begin{align}
  \nonumber
  h_i(
  [E,\ell_{q_1},\ell_{q_2},\ell_{q_3},\ell_{p_1},\cdots,\ell_{p_n}]) =
  [E,\ell_{q_1},\ell_{q_2},\ell_{q_3},\ell_{p_i}].
\end{align}
Then $h := (h_1, \cdots, h_n)$ is an
isomorphism by Theorem \ref{theorem:PMtot} and Lemma
\ref{lemma:cp1-map-line}.
\end{proof}

\begin{remark}
\label{remark:cp1-odd-n}
Definition \ref{def:PM} for $\calP_M(C,m,n)$ implies that
$\calP_M(\CP^1,m,n) = \varnothing$ for odd $n$, since
there are no semistable rank 2 vector bundles of odd degree on
$\CP^1$.
We could alternatively define $\calP_M(\CP^1,m,n)$ by requiring that
$H(E,\ell_{p_1}, \cdots, \ell_{p_n})$ have the minimal possible
instability degree, which is 0 for $n$ even and 1 for $n$ odd.
This condition is equivalent to semistability for $n$ even, but is a
distinct condition for $n$ odd, and gives a nonempty space.
\end{remark}

\subsection{Embedding
  $\calP_M(\CP^1,m,n) \rightarrow M^s(\CP^1,m+n)$}
\label{sec:cp1-hecke-embedding}

We will now describe a canonical open embedding of the space
$\calP_M(\CP^1,m,n)$ into the space of stable parabolic bundles
$M^s(\CP^1,m+n)$.
We first need two Lemmas:

\begin{lemma}
\label{lemma:cp1-embedding-a}
Given a parabolic bundle
$(\calO \oplus \calO,\ell_{p_1},\cdots, \ell_{p_n})$ over $\CP^1$, if
$\ell_{p_1},\cdots,\ell_{p_n}$ are bad in the same direction then
$H(\calO \oplus \calO,\ell_{p_1},\cdots, \ell_{p_n}) =
\calO \oplus \calO(-n)$.
\end{lemma}

\begin{proof}
Since $\ell_{p_1}, \cdots, \ell_{p_n}$ are bad in the same direction,
we have that $\ell_{p_1} = \cdots = \ell_{p_n}$ under a global
trivialization of $\calO \oplus \calO$ in which all the fibers are
identified with $\Complex^2$.
An explicit sequence of Hecke modifications with
$\ell_{p_1} = \cdots = \ell_{p_n}$ is given by
\begin{align}
  \nonumber
  \calO \oplus \calO
  \hmod{\alpha_1}{p_1}
  \calO \oplus \calO(-1)    
  \hmod{\alpha_2}{p_2} \cdots
  \hmod{\alpha_n}{p_n}
  \calO \oplus \calO(-n),
\end{align}
where
$\calO \oplus \calO \hmod{\alpha_1}{p_1} \calO \oplus \calO(-1)$
is a Hecke modification corresponding to the line $\ell_{p_1}$ and
for $i=2,\cdots,n$ we define
\begin{align}
  \nonumber
  \alpha_i = \left(\begin{array}{cc}
    1 & 0 \\
    0 & f_i
  \end{array}\right),
\end{align}
where $f_i:\calO(-i) \rightarrow \calO(-i+1)$ is the unique (up to
rescaling by a constant) nonzero morphism such that $(f_i)_{p_i} = 0$
on the fibers over $p_i$.
\end{proof}

\begin{lemma}
\label{lemma:cp1-embedding-b}
Given a parabolic bundle
$(\calO \oplus \calO,\ell_{p_1},\cdots, \ell_{p_n})$ over $\CP^1$,
if $H(\calO \oplus \calO,\ell_{p_1},\cdots, \ell_{p_n})$ is semistable
then
$(\calO \oplus \calO,\ell_{p_1},\cdots, \ell_{p_n})$ is semistable.
\end{lemma}

\begin{proof}
We will prove the contrapositive, so assume that
$(\calO \oplus \calO,\ell_{p_1},\cdots, \ell_{p_n})$ is unstable.
It follows that more $n/2$ of the lines are bad in the same direction.
Let $s$ denote the number of such lines, and choose a permutation
$\sigma \in \Sigma_n$ such that the first $s$ points of
$(\sigma(p_1), \cdots, \sigma(p_n))$
correspond to these lines.
By Lemma \ref{lemma:cp1-embedding-a} we have that
$H(\calO \oplus \calO, \ell_{\sigma(p_1)}, \cdots, \ell_{\sigma(p_s)})
=
\calO \oplus \calO(-s)$, which has instability degree $s$.
Lemma \ref{lemma:cp1-observations} states that a single Hecke
modification changes the instability degree by $\pm 1$, so
$H(\calO \oplus \calO, \ell_{\sigma(p_1)}, \cdots, \ell_{\sigma(p_n)}) =
H(\calO \oplus \calO,\ell_{p_1},\cdots, \ell_{p_n})$
has instability degree at least $s - (n-s) = 2s - n > 0$, and is thus
unstable.
\end{proof}

\begin{remark}
The converse to Lemma \ref{lemma:cp1-embedding-b} does not always
hold; for example, consider the semistable parabolic bundle
$(\calO \oplus \calO, \ell_{p_1}, \ell_{p_2}, \ell_{p_3},
\ell_{p_4})$ for points $p_i = [1:\mu_i] \in \CP^1$, where
\begin{align}
  \nonumber
  \ell_{p_1} &= [1:0], &
  \ell_{p_2} &= [0:1], &
  \ell_{p_3} &= [1:1], &
  \ell_{p_4} &=
  [(\mu_3 - \mu_1)(\mu_4 - \mu_2) : (\mu_3 - \mu_2)(\mu_4 - \mu_1)].
\end{align}
One can show that
$H(\calO \oplus \calO, \ell_{p_1}, \ell_{p_2}, \ell_{p_3},
\ell_{p_4}) = \calO(-3) \oplus \calO(-1)$, which is unstable.
\end{remark}

\begin{theorem}
\label{theorem:cp1-hecke-embedding}
There is a canonical open embedding
$\calP_M(\CP^1,m,n) \rightarrow M^s(\CP^1,m+n)$.
\end{theorem}

\begin{proof}
Take
$[E,\ell_{q_1},\cdots,\ell_{q_m},\ell_{p_1},\cdots, \ell_{p_n}]
\in \calP_M(\CP^1,m,n)$; note that $E = \calO \oplus \calO$.
Since $(E,\ell_{q_1},\cdots,\ell_{q_m})$ is stable,
fewer than $m/2$ of the lines $\ell_{q_1}, \cdots, \ell_{q_m}$ are
equal under the global trivialization of $E$.
Since $H(E,\ell_{p_1},\cdots, \ell_{p_n})$ is semistable, it follows
from Lemma \ref{lemma:cp1-embedding-b} that
$(E,\ell_{p_1},\cdots, \ell_{p_n})$ is semistable,
so at most $n/2$ of the lines $\ell_{p_1}, \cdots, \ell_{p_n}$ are
equal under the global trivialization of $E$.
It follows that fewer than $(m+n)/2$ of the lines
$\ell_{q_1},\cdots,\ell_{q_m},\ell_{p_1},\cdots, \ell_{p_n}$
are equal, so
$(E,\ell_{q_1},\cdots,\ell_{q_m},\ell_{p_1},\cdots, \ell_{p_n})$
is stable.
So $\calP_M(\CP^1,m,n)$ is a subset of $M^s(\CP^1,m+n)$.
Specifically, the set $\calP_M(\CP^1,m,n)$ consists of points
$[E,\ell_{q_1},\cdots,\ell_{q_m},\ell_{p_1},\cdots, \ell_{p_n}]
\in M^s(\CP^1,m+n)$
such that $(E,\ell_{q_1},\cdots,\ell_{q_m})$ is stable and
$H(E,\ell_{p_1},\cdots, \ell_{p_n})$ is semistable.
Since stability and semistability are open conditions,
we have that
$\calP_M(\CP^1,m,n)$ is an open subset of $M^s(\CP^1,m+n)$.
\end{proof}

\subsection{Examples}
We can generalize the Kamnitzer space $\calH(\CP^1,n)$ to allow
for both even and odd $n$, in analogy with the generalization
described in Remark \ref{remark:cp1-odd-n}:

\begin{definition}
\label{def:Kamnitzer-gen}
Given distinct points $(p_1, \cdots, p_n) \in (\CP^1)^n$,
define the \emph{Kamnitzer space $\calH(\CP^1,n)$} to be the subset of
$\calH^{tot}(\CP^1, \calO \oplus \calO, n)$
consisting of equivalence classes of sequences of Hecke modifications
$\calO \oplus \calO
\hmod{\alpha_1}{p_1} E_1
\hmod{\alpha_2}{p_2} \cdots
\hmod{\alpha_n}{p_n} E_n$
such that $E_n$ has the minimum possible instability degree (0 for $n$
even, 1 for $n$ odd.)
\end{definition}

Here we compute Kamnitzer space $\calH(\CP^1,n)$ for $n=0,1,2,3$.

\subsubsection{Calculate $\calH(\CP^1,0)$}

We have
\begin{align}
  \nonumber
  \calH(\CP^1,0) = \calH^{tot}(\CP^1,0) = \{\calO \oplus \calO\}.
\end{align}

\subsubsection{Calculate $\calH(\CP^1,1)$}

All Hecke modifications of $\calO \oplus \calO$ give
$\calO \oplus \calO(-1)$, which has instability degree 1, so
\begin{align}
  \nonumber
  \calH(\CP^1,1) = \calH^{tot}(\CP^1,1) = \CP^1.
\end{align}

\subsubsection{Calculate $\calH(\CP^1,2)$}
\label{sec:cp1-example-2}

A sequence of two Hecke modifications of
$\calO \oplus \calO$ must have one of two forms:
\begin{align}
  \nonumber
  &\calO \oplus \calO
  \hmod{\alpha_1}{p_1}
  \calO \oplus \calO(-1)
  \hmod{\alpha_2}{p_2}
  \calO(-1) \oplus \calO(-1), &
  \nonumber
  &\calO \oplus \calO
  \hmod{\alpha_1}{p_1}
  \calO \oplus \calO(-1)
  \hmod{\alpha_2}{p_2}
  \calO \oplus \calO(-2).
\end{align}
In the first case the terminal bundle $\calO(-1) \oplus \calO(-1)$
is semistable, whereas in the second case the terminal bundle
$\calO \oplus \calO(-2)$ is unstable.
So $\calH(\CP^1,2)$ is the complement in $\calH^{tot}(\CP^1,2)$ of
sequences of Hecke modifications of the second form.
As we showed in the proof of Lemma \ref{lemma:cp1-embedding-a},
the resulting space is
\begin{align}
  \nonumber
  \calH(\CP^1,2) = (\CP^1)^2 - \{(a,a) \mid a \in \CP^1\}.
\end{align}

\subsubsection{Calculate $\calH(\CP^1,3)$}

Now consider a sequence of three Hecke modifications of
$\calO \oplus \calO$.
The only sequences for which the terminal bundle does not have
instability degree 1 are of the form
\begin{align}
  \nonumber
  &\calO \oplus \calO
  \hmod{\alpha_1}{p_1}
  \calO \oplus \calO(-1)
  \hmod{\alpha_2}{p_2}
  \calO \oplus \calO(-2)
  \hmod{\alpha_3}{p_3}
  \calO \oplus \calO(-3).
\end{align}
So $\calH(\CP^1,3)$ is the complement in $\calH^{tot}(\CP^1,3)$ of
sequences of Hecke modifications of this form.
As we showed in the proof of Lemma \ref{lemma:cp1-embedding-a},
the resulting space is
\begin{align}
  \nonumber
  \calH(\CP^1,3) = (\CP^1)^3 - \{(a,a,a) \mid a \in \CP^1\}.
\end{align}

\subsection{The Seidel--Smith space $\calY(S^2,2r)$}
\label{sec:seidel-smith}

Here we compare the embedding of
$\calP_M(\CP^1,2r)$ into $M^s(\CP^1,2r+3)$ that we defined in
Theorem \ref{theorem:cp1-hecke-embedding} with an embedding
of the Seidel--Smith space $\calY(S^2,2r)$ into $M^s(\CP^1,2r+3)$ due
to Woodward.
We begin by defining the Seidel--Smith space $\calY(S^2,2r)$.

\begin{definition}
We define the {\em Slodowy slice} $S_{2r}$ to be the subspace of
$\mathfrak{gl}(2r,\Complex)$ consisting of matrices with $2 \times 2$
identity matrices $I$ on the superdiagonal, arbitrary $2 \times 2$
matrices in the left column, and zeros everywhere else.
\end{definition}

\begin{example}
Elements of $S_6$ have the form
\begin{align}
  \nonumber
  \left(
  \begin{array}{ccc}
    Y_1 & I & 0 \\
    Y_2 & 0 & I \\
    Y_3 & 0 & 0 \\
  \end{array}
  \right),
\end{align}
where $Y_1$, $Y_2$, and $Y_3$ are arbitrary $2\times 2$ complex
matrices.
\end{example}

\begin{definition}
Define a map $\chi:S_{2r} \rightarrow \Complex^{2r}/\Sigma_{2r}$ that
sends a matrix to the multiset of the roots of its characteristic
polynomial, where a root of multiplicity $m$ occurs $m$ times in the
multiset.
\end{definition}

\begin{definition}
\label{def:seidel-smith}
Given distinct points $(\mu_1, \cdots, \mu_{2r}) \in \Complex^{2r}$,
define the {\em Seidel--Smith space} $\calY(S^2,2r)$ to be the
fiber $\chi^{-1}(\{\mu_1, \cdots, \mu_{2r}\})$.
For simplicity, we are suppressing the dependence of
$\calY(S^2,2r)$ on $\mu_1, \cdots, \mu_{2r}$ in the notation.
This space was introduced in \cite{Seidel-Smith}, which denotes
$\calY(S^2,2r)$ by $\calY_r$.
\end{definition}

The Seidel--Smith space $\calY(S^2,2r)$ naturally has the structure of
a complex manifold, in fact a smooth complex affine variety.
In what follows, it will be useful to define
local coordinates $\xi:U \rightarrow V$ on $\CP^1$, where
$U = \{[1:z] \mid z \in \Complex\} \subset \CP^1$,
$V = \Complex$, and
$\xi([1:z]) = z$.
We define points $p_i := \xi^{-1}(\mu_i) \in \CP^1$ corresponding to
$\mu_i$ for $i = 1, \cdots, 2r$.

\subsubsection{Kamnitzer isomorphism
  $\calH(\CP^1,2r) \rightarrow \calY(S^2,2r)$}
\label{sec:kamnitzer-isomorphism}

Here we describe an isomorphism due to Kamnitzer from the space of
Hecke modifications $\calH(\CP^1,2r)$ to the Seidel--Smith space
$\calY(S^2,2r)$.

Define global meromorphic sections $s_n$ of
$\calO(n)$ such that $\Div s_n = n \cdot [\infty]$.
For each rank 2 vector bundle $E = \calO(n) \oplus \calO(m)$, define
standard meromorphic sections
\begin{align}
  \nonumber
  e_E^1 &= (s_n, 0) ,&
  e_E^2 &= (0, s_m),
\end{align}
and define a standard local trivialization
$\psi_E:\pi_E^{-1}(U) \rightarrow U \times \Complex^2$
of $E$ over $U$ by
\begin{align}
  \nonumber
  e_E^1(p) \mapsto (p, (1,0)), &&
  e_E^2(p) \mapsto (p, (0,1)).
\end{align}

Consider an element of $\calH(\CP^1,2r)$:
\begin{align}
  \label{eqn:kamnitzer-isomorphism-seq}
  [E_0
    \hmod{\alpha_1}{p_1} E_1
    \hmod{\alpha_2}{p_2} \cdots
    \hmod{\alpha_{2r}}{p_{2r}} E_{2r}],
\end{align}
where $E_0 = \calO \oplus \calO$.
Define rank 2 free $\Complex[z]$-modules $L_i$ for $i=0,\cdots,2r$ as
spaces of sections of $E_i$ over $U$:
\begin{align}
  \nonumber
  L_i &= \Gamma(U,E_i) = \Complex[z]\cdot\{e_{E_i}^1, e_{E_i}^2\}.
\end{align}
The sequence of Hecke modifications
(\ref{eqn:kamnitzer-isomorphism-seq}) then yields a sequence of
$\Complex[z]$-module morphisms $\bar{\alpha}_i$:
\begin{eqnarray}
  \nonumber
  \begin{tikzcd}
    L_0 &
    \arrow{l}[swap]{\bar{\alpha}_1} L_1 &
    \arrow{l}[swap]{\bar{\alpha}_2} \cdots &
    \arrow{l}[swap]{\bar{\alpha}_{2r}} L_{2r}.
  \end{tikzcd}
\end{eqnarray}
We can also view $\bar{\alpha}_i$ as a holomorphic map
$\bar{\alpha}_i:V \rightarrow M(2,\Complex)$, defined as in Definition
\ref{def:single-hecke} such that
\begin{align}
  \nonumber
  (\psi_{E_{i-1}} \circ \alpha_i \circ \psi_{E_i}^{-1})(q,v) =
  (q,\bar{\alpha}_i(\xi(q))v).
\end{align}
Define an $2r$-dimensional complex vector space $V$ by
\begin{align}
  \nonumber
  V = \coker (\bar{\alpha}_1 \circ \bar{\alpha}_2 \circ \cdots \circ
  \bar{\alpha}_{2r}) =
  L_0/(\bar{\alpha}_1 \circ \bar{\alpha}_2 \circ \cdots \circ
  \bar{\alpha}_{2r})(L_{2r}).
\end{align}
One can show that an ordered basis for $V$ is given by
\begin{align}
  \nonumber
  (
  z^{r-1}e_{E_0}^1,\,
  z^{r-1}e_{E_0}^2,\,  \cdots,
  ze_{E_0}^1,\, ze_{E_0}^2,\,
  e_{E_0}^1,\, e_{E_0}^2
  ).
\end{align}
Note that $z$ acts $\Complex$-linearly on $V$, and thus defines a
$2r \times 2r$ complex matrix $A$ relative to this basis.

\begin{theorem}[Kamnitzer \cite{Kamnitzer}]
We have an isomorphism
$\calH(\CP^1,2r) \rightarrow \calY(S^2,2r)$ given by
\begin{align}
  \nonumber
  [E_0
    \hmod{\alpha_1}{p_1} E_1
    \hmod{\alpha_2}{p_2} \cdots
    \hmod{\alpha_{2r}}{p_{2r}} E_{2r}]
  \mapsto A.
\end{align}
\end{theorem}

To perform calculations, it is
useful to have explicit expressions for the maps $\bar{\alpha}_i$.
For each vector bundle $E$ and point $p = [1:\mu] \in U$, we use the
standard trivialization
$\psi_E:\pi^{-1}(E) \rightarrow U \times \Complex^2$ to
identify $\mathbbm{P}(E_p)$ with $\CP^1$.
For each line $\ell_p \in \mathbbm{P}(E_p) = \CP^1$, we give a
holomorphic map
$\bar{\alpha}:V \rightarrow M(2,\Complex)$ that describes a Hecke
modification $\alpha:F \rightarrow E$ corresponding to $\ell_p$: \\
\\
Hecke modifications of $\calO(n) \oplus \calO$ for $n \geq 1$:
\begin{align}
  \nonumber
  \ell_p &= [1:0]: &
  \calO(n) \oplus \calO
  &\hmod{\alpha}{p}
  \calO(n) \oplus \calO(-1), &
  \bar{\alpha}(z) &= \left(\begin{array}{cc}
    1 & 0\\
    0 & z - \mu
    \end{array}
  \right), \\
  \nonumber
  \ell_p &= [\lambda:1]: &
  \calO(n) \oplus \calO
  &\hmod{\alpha}{p}
  \calO(n-1) \oplus \calO, &
  \bar{\alpha}(z) &= \left(\begin{array}{cc}
    z-\mu & \lambda\\
    0 & 1
  \end{array}
    \right).
\end{align}
Hecke modifications of $\calO \oplus \calO$:
\begin{align}
  \nonumber
  \ell_p &= [1:0]: &
  \calO \oplus \calO
  &\hmod{\alpha}{p}
  \calO \oplus \calO(-1), &
  \bar{\alpha}(z) &= \left(\begin{array}{cc}
    1 & 0 \\
    0 & z - \mu
  \end{array}
  \right), \\
  \nonumber
  \ell_p &= [\lambda:1]: &
  \calO \oplus \calO
  &\hmod{\alpha}{p}
  \calO \oplus \calO(-1), &
  \bar{\alpha}(z) &= \left(\begin{array}{cc}
    \lambda & z - \mu\\
    1 & 0
  \end{array}
  \right).
\end{align}

\subsubsection{Woodward embedding
  $\calY(S^2,2r) \rightarrow M^s(\CP^1,2r+3)$}
\label{sec:woodward-embedding}

Here we describe an embedding due to Woodward \cite{Woodward} of the
Seidel--Smith space $\calY(S^2,2r)$ into the space of stable rank 2
parabolic bundles $M^s(\CP^1,2r+3)$.
We first make the following definition:

\begin{definition}
Given distinct points $(p_1, \cdots, p_n) \in (\CP^1)^n$,
define a subspace $\calP^{ss}(\CP^1,n)$ of
$\calP^{tot}(\CP^1,\calO \oplus \calO,n)$ consisting of
semistable parabolic bundles
$(\calO \oplus \calO, \ell_{p_1}, \cdots, \ell_{p_n})$.
\end{definition}

In particular, $\calP^{ss}(\CP^1,n)$ consists of parabolic bundles
$(\calO \oplus \calO, \ell_{p_1}, \cdots, \ell_{p_n})$ for which at
most $n/2$ of the lines are equal to any given line under a global
trivialization of $\calO \oplus \calO$.
Given distinct points
$(q_1, q_2, q_3, p_1, \cdots, p_n) \in (\CP^1)^{n+3}$ and
distinct lines $\ell_{q_1}, \ell_{q_2}, \ell_{q_3} \in \CP^1$, we can
define an embedding
$\calP^{ss}(\CP^1,n) \rightarrow M^s(\CP^1,n+3)$,
\begin{align}
  \nonumber
  (\calO \oplus \calO, \ell_{p_1}, \cdots, \ell_{p_n}) \mapsto
  [\calO \oplus \calO, \ell_{q_1}, \ell_{q_2}, \ell_{q_3},
    \ell_{p_1}, \cdots, \ell_{p_n}].
\end{align}
We will define an embedding
$\calY(S^2,2r) \rightarrow \calP^{ss}(\CP^1,2r)$.
Composing with
$\calP^{ss}(\CP^1,2r) \rightarrow M^s(\CP^1,2r+3)$
will then yield the Woodward embedding.
We first define some vectors.
Define vectors $x, y \in \Complex^2$ by
\begin{align}
  \nonumber
  x &= (1,0), &
  y &= (0,1).
\end{align}
Define vectors
$x_1, \cdots, x_r, y_1, \cdots, y_r \in \Complex^{2r}$ by
\begin{align}
  \nonumber
  x_1 &= (x,\, 0,\, \cdots,\, 0), &
  x_2 &= (0,\, x,\, 0,\, \cdots,\, 0), &
  &\cdots, &
  x_r &= (0,\, \cdots,\, 0,\, x), \\
  \nonumber
  y_1 &= (y,\, 0,\, \cdots,\, 0), &
  y_2 &= (0,\, y,\, 0,\, \cdots,\, 0), &
  &\cdots, &
  y_r &= (0,\, \cdots,\, 0,\, y).
\end{align}
Define vectors $x(\mu), y(\mu) \in \Complex^{2r}$ by
\begin{align}
  \nonumber
  x(\mu) &=
  (\mu^{r-1}x,\, \mu^{r-2}x,\, \cdots,\, \mu x,\, x) =
  \mu^{r-1}x_1 + \mu^{r-2}x_2 + \cdots + \mu x_{r-1} + x_r, \\
  \nonumber
  y(\mu) &=
  (\mu^{r-1}y,\, \mu^{r-2}y,\, \cdots,\, \mu y,\, y) =
  \mu^{r-1}y_1 + \mu^{r-2}y_2 + \cdots + \mu y_{r-1} + y_r.
\end{align}
We use the vectors to define a subspace
$W(s,t)$ of $\Complex^{2r}$, and we calculate its dimension:

\begin{definition}
Given $(s,t) \in \Complex^2$, define a subspace
$W(s,t) =
\Complex \cdot \{sx(\mu) + ty(\mu) \mid \mu \in \Complex\}$ of
$\Complex^{2r}$.
\end{definition}

\begin{lemma}
\label{lemma:W}
For $(s,t) \in \Complex^2 - \{0\}$, we have that $\dim W(s,t) = r$.
\end{lemma}

\begin{proof}
Define a vector $w(s,t,\mu) \in \Complex^{2r}$ by
\begin{align}
  \label{eqn:w-s-t-mu-basis}
  w(s,t,\mu) := sx(\mu) + ty(\mu) =
  \mu^{r-1}(sx_1 + t y_1) + \cdots +
  \mu(sx_{r-1} + ty_{r-1}) +
  (sx_r + t y_r).
\end{align}
Let $S \subset \Complex^{2r}$ denote the span of the linearly
independent vectors
$\{sx_1 + ty_1,\, \cdots,\, sx_r + ty_r\}$.
Clearly $W(s,t) \subseteq S$.
Form an $r \times r$ matrix $V$ whose $i$-th row vector
consists of the components of $w(s,t,i)$ relative to the ordered basis
$(sx_1 + ty_1,\, \cdots,\, sx_r + ty_r)$ of $S$.
From equation (\ref{eqn:w-s-t-mu-basis}), it follows that the $(i,j)$
matrix element of $V$ is given by
\begin{align}
  \nonumber
  V_{ij} = (i)^{r-j}.
\end{align}
So $V$ is a Vandermonde matrix corresponding to the distinct integers
$(1,2,\cdots,r)$, and thus has nonzero determinant.
It follows that the vectors $\{w(s,t,1),\cdots, w(s,t,r)\}$ are
linearly independent, hence $W(s,t) = S$ and
$\dim W(s,t) = \dim S = r$.
\end{proof}

We are now ready to define the Woodward embedding.
Take a matrix $A \in \calY(S^2,2r)$.
Let $v(\mu) \in \Complex^{2r}$ be a left-eigenvector of $A$ with
eigenvalue $\mu$:
\begin{align}
  \nonumber
  v(\mu)A = \mu v(\mu).
\end{align}
Given the form of $A$, it follows that
\begin{align}
  \nonumber
  v(\mu) = X(\mu) x(\mu) + Y(\mu) y(\mu)
\end{align}
for some $X(\mu),Y(\mu) \in \Complex$.
Since $A \in \calY(S^2,2r)$, the eigenvalues of $A$ are
$\mu_1,\, \cdots,\, \mu_{2r} \in \Complex$.
Define lines $\ell_{p_i} \in \CP^1$ for $i= 1, \cdots, 2r$ by
\begin{align}
  \nonumber
  \ell_{p_i} = [X(\mu_i) : Y(\mu_i)].
\end{align}

\begin{theorem}[Woodward \cite{Woodward}]
\label{theorem:woodward-embedding}
We have an embedding
$\calY(S^2,2r) \rightarrow \calP^{ss}(\CP^1,2r)$,
$A \mapsto (\calO \oplus \calO, \ell_{p_1}, \cdots, \ell_{p_{2r}})$.
\end{theorem}

\begin{proof}
A priori the codomain of the map is $\calP^{tot}(\CP^1,2r)$, so we
need to show that the image is in fact contained in
$\calP^{ss}(\CP^1,2r)$.
Note that if $\ell_{p_i} = [s:t]$ then  $v(\mu_i) \in W(s,t)$.
Since the eigenvalues $\mu_1, \cdots, \mu_{2r}$ are distinct, the
eigenvectors $\{v(\mu_1), \cdots, v(\mu_{2r})\}$ are
linearly independent, so the maximum number of
eigenvectors that can live in $W(s,t)$ is $\dim W(s,t) = r$ by
Lemma \ref{lemma:W}.
So at most $r$ of the lines $\ell_{p_1}, \cdots, \ell_{p_{2r}}$ can be
equal to any given line $[s:t]$ in $\CP^1$, and thus
$(\calO \oplus \calO, \ell_{p_1}, \cdots, \ell_{p_{2r}})$ is
semistable.
\end{proof}

Lemma \ref{lemma:cp1-embedding-b} states that we have an embedding
$\calP(\CP^1,2r) \rightarrow \calP^{ss}(\CP^1,2r)$.
We can precompose this embedding with the
canonical isomorphism $\calH(\CP^1,2r) \rightarrow \calP(\CP^1,2r)$
described in Theorem \ref{theorem:iso-kamnitzer-P}
to obtain an embedding
$\calH(\CP^1,2r) \rightarrow \calP^{ss}(\CP^1,2r)$, and we obtain a
commutative diagram
\begin{eqnarray}
  \nonumber
  \begin{tikzcd}
    \calH(\CP^1,2r) \arrow{r}\arrow{d}[swap]{\cong} &
    \calP^{ss}(\CP^1,2r) \arrow{d} \\
    \calP_M(\CP^1,2r) \arrow{r} &
    M^{ss}(\CP^1,2r+3),
  \end{tikzcd}
\end{eqnarray}
where the bottom horizontal arrow is the embedding described in
Theorem \ref{theorem:cp1-hecke-embedding}.
It is interesting to compare the embedding
$\calH(\CP^1,2r) \rightarrow \calP^{ss}(\CP^1,2r)$
to the embedding
$\calY(S^2,2r) \rightarrow \calP^{ss}(\CP^1,2r)$ from Theorem
\ref{theorem:woodward-embedding}.
We make the following conjecture:

\begin{conjecture}
\label{conjecture:woodward-hecke}
There is a commutative diagram
\begin{eqnarray}
  \nonumber
  \begin{tikzcd}
    \calH(\CP^1,2r) \arrow{r}\arrow{d}{\cong} &
    \calP^{ss}(\CP^1,2r) \arrow{d}{\cong} \\
    \calY(S^2,2r) \arrow{r} &
    \calP^{ss}(\CP^1,2r),
    \end{tikzcd}
\end{eqnarray}
where the left downward arrow is the Kamnitzer isomorphism and the
right downward arrow is the map on parabolic bundles induced by
$\phi \in \Aut(\calO \oplus \calO) = GL(2,\Complex)$, where
\begin{align}
  \nonumber
  \phi =
  \left(\begin{array}{cc}
    0 & -1 \\
    1 & 0
  \end{array}\right).
\end{align}
\end{conjecture}

\begin{theorem}
Conjecture \ref{conjecture:woodward-hecke} holds
for $r=1$ and $r=2$.
\end{theorem}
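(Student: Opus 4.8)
The plan is to verify the diagram directly in each case, exploiting the fact that for small $m$ every map in it is given by an explicit formula. The first reduction is that it suffices to check commutativity on a dense subset of ${\cal H}(S^2,2m)$: the Kamnitzer isomorphism ${\cal H}(S^2,2m)\to{\cal Y}(S^2,2m)$, the Woodward embedding ${\cal Y}(S^2,2m)\to{\cal M}(S^2,2m)$, the Hecke embedding ${\cal H}(S^2,2m)\to{\cal M}(S^2,2m)$, and the automorphism $[x:y]\mapsto[-y:x]$ of $\CP^1$ are all continuous (indeed holomorphic), so two composites $\,{\cal H}(S^2,2m)\to{\cal M}(S^2,2m)\subset(\CP^1)^{2m}\,$ that agree on a dense subset agree everywhere, since the target is Hausdorff. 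I would take as the dense stratum the one on which all intermediate bundles are maximally semistable, $E_i\cong{\cal O}(-\lceil i/2\rceil)\oplus{\cal O}(-\lfloor i/2\rfloor)$; there each $\bar\alpha_i$ is the ``generic'' representative of the table of Section \ref{sec:cp1-table-morphisms} (up to tensoring with a line bundle), depending on $\mu_i=\xi(p_i)$ and $\lambda_i\in\Complex$, and the Hecke coordinates are $a_i=\eta([\bar\alpha_1\cdots\bar\alpha_i],\mu_i)$.

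For $m=1$ I would use the computation of Section \ref{sec:cp1-example-2}: on the dense stratum the sequence is ${\cal O}\oplus{\cal O}\overset{\alpha_1}{\underset{p_1}{\longleftarrow}}{\cal O}\oplus{\cal O}(-1)\overset{\alpha_2}{\underset{p_2}{\longleftarrow}}{\cal O}(-1)\oplus{\cal O}(-1)$ with $\bar\alpha_1$ as in (\ref{eqn:cp1-stable-u1}) and $\bar\alpha_2$ as in (\ref{eqn:cp1-unstable-d}), so the Hecke embedding gives $a_1=[\lambda_1:1]$ and $a_2=[\lambda_1\lambda_2+(\mu_2-\mu_1):\lambda_2]$. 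I would then form the cokernel $V=L_0/(\bar\alpha_1\bar\alpha_2)(L_2)$, which has $\{e_E^1,e_E^2\}$ as a $\Complex$-basis, read off the $2\times2$ matrix $A$ of multiplication by $z$ from the two columns of $\bar\alpha_1\bar\alpha_2$ (one checks $\tr A=\mu_1+\mu_2$ and $\det A=\mu_1\mu_2$, so $A\in{\cal Y}(S^2,2)$), and solve $v(\mu_k)A=\mu_k v(\mu_k)$. Since $m=1$, $v(\mu_k)=v_1(\mu_k)=v_m(\mu_k)=(X(\mu_k),Y(\mu_k))$, and a short computation gives $v(\mu_1)\propto(1,-\lambda_1)$, $v(\mu_2)\propto(\lambda_2,\,\mu_1-\mu_2-\lambda_1\lambda_2)$, hence $a_1'=[1:-\lambda_1]$ and $a_2'=[\lambda_2:\mu_1-\mu_2-\lambda_1\lambda_2]$. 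These equal $\phi(a_1)=[-1:\lambda_1]$ and $\phi(a_2)=[-\lambda_2:\lambda_1\lambda_2+(\mu_2-\mu_1)]$ projectively, which settles the case.

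For $m=2$ the procedure is identical in outline with $4\times4$ matrices. On the dense stratum the sequence runs ${\cal O}\oplus{\cal O}\leftarrow{\cal O}\oplus{\cal O}(-1)\leftarrow{\cal O}(-1)\oplus{\cal O}(-1)\leftarrow{\cal O}(-1)\oplus{\cal O}(-2)\leftarrow{\cal O}(-2)\oplus{\cal O}(-2)$, and $\bar\alpha_1,\bar\alpha_2,\bar\alpha_3,\bar\alpha_4$ are the generic table representatives with parameters $\lambda_1,\dots,\lambda_4$, $\mu_1,\dots,\mu_4$; the Hecke coordinates $a_i$ come from $\eta$ as before. I would compute the product $\bar\alpha_1\bar\alpha_2\bar\alpha_3\bar\alpha_4$, form $V=L_0/(\bar\alpha_1\bar\alpha_2\bar\alpha_3\bar\alpha_4)(L_4)$ with $\Complex$-basis $\{ze_E^1,ze_E^2,e_E^1,e_E^2\}$, check that the matrix $A$ of multiplication by $z$ in this basis has the Slodowy-slice form of $S_4$ (a $2\times2$ identity block on the superdiagonal, the remaining data in the first block column), solve the left-eigenvector equations using the relation $v_1(\mu)=\mu v_2(\mu)$ of Section \ref{sec:woodward-embedding}, extract $v_2(\mu_k)=(X(\mu_k),Y(\mu_k))$, and compare $a_k'=[X(\mu_k):Y(\mu_k)]$ with $\phi(a_k)$.

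The main obstacle is the bookkeeping in the $m=2$ computation: carrying four parameters through the cokernel and verifying that the matrix of $z$ lands in the Slodowy slice requires choosing the cokernel basis and the ordering $\mu_i=\xi(p_i)$ of the Hecke points consistently with the conventions of Sections \ref{sec:kamnitzer-isomorphism} and \ref{sec:woodward-embedding}. The twist $\phi\colon[x:y]\mapsto[-y:x]$ is forced by the mismatch between the convention defining $\eta\colon\Gr(1)\to\CP^1$ (which records the first column of $B(0)$ when $A=BZ$) and the convention by which Woodward reads the parabolic line off the left eigenvector, so the genuinely delicate point is tracking signs and the left- versus right-eigenvector convention; once these are pinned down, each case reduces to a finite symbolic identity between projective tuples in $\lambda_i,\mu_i$ that one checks by direct calculation, and, by the density argument, only on the single dense stratum.
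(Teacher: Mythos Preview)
Your proposal is correct and follows essentially the same approach as the paper: compute the Hecke embedding, the Kamnitzer matrix $A$, and the Woodward left-eigenvectors explicitly in the parameters $\lambda_i,\mu_i$, then compare. The only difference is that the paper checks both representative forms of $\bar\alpha_1$ (equations (\ref{eqn:cp1-stable-u1}) and (\ref{eqn:cp1-stable-u2})) directly rather than invoking your density argument, and for $m=2$ simply asserts the computation is similar; your density reduction is a legitimate labor-saving device, though you should be careful that ``all intermediate bundles maximally semistable'' is not quite the right description of the dense stratum for $m=1$ (every point of ${\cal H}(S^2,2)$ has $E_1\cong{\cal O}\oplus{\cal O}(-1)$ and $E_2\cong{\cal O}(-1)^2$)---what you actually want is the open locus where each $\bar\alpha_i$ admits the generic finite-$\lambda_i$ representative.
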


\begin{proof}
This can be shown by a direct calculation.
\end{proof}

\section{Elliptic curves}
\label{sec:elliptic-curves}

\subsection{Vector bundles on elliptic curves}
\label{sec:elliptic-vector-bundles}

Vector bundles on elliptic curves have been classified by Atiyah
\cite{Atiyah}:

\begin{definition}
Define $\calE(r,d)$ to be the set of isomorphism classes of
indecomposable vector bundles of rank $r$ and degree $d$ on an
elliptic curve $X$.
\end{definition}

The set $\calE(r,d)$ naturally has the structure of a complex
manifold, and we have the following result:

\begin{theorem}[Atiyah \cite{Atiyah}]
There are isomorphisms
$\Jac(X) \rightarrow \calE(r,d)$ for all $r$ and $d$.
\end{theorem}

In particular, $\calE(1,d)$ is the set of isomorphism classes of line
bundles of degree $d$, and the
isomorphism $\Jac(X) \rightarrow \calE(1,d)$ is given by
$[L] \mapsto [L \otimes \calO(d\cdot e)]$ for a choice of
basepoint $e \in X$.
Here we summarize the facts we will need regarding line bundles and
rank 2 vector bundles on elliptic curves.
Results that are well-known will be stated without proof; full proofs
can be found in \cite{Atiyah,Iena,Teixidor}.

\begin{definition}
We say that a degree 0 line bundle $L$ is \emph{2-torsion} if
$L^2 = \calO$.
\end{definition}

There are four 2-torsion line bundles on an elliptic curve.
We will denote the 2-torsion line bundles by $L_i$ for $i=1,2,3,4$,
with the convention that $L_1 = \calO$.

\begin{definition}
Given line bundles $L$ and $M$ on an elliptic curve, an
\emph{extension} of $L$ by $M$ is an exact sequence
\begin{eqnarray}
  \nonumber
  \begin{tikzcd}
    0 \arrow{r} &
    M \arrow{r} &
    E \arrow{r} &
    L \arrow{r} &
    0,
  \end{tikzcd}
\end{eqnarray}
where $E$ is a rank 2 vector bundle.
\end{definition}

\begin{lemma}
Given line bundles $L$ and $M$ on an elliptic curve, equivalence
classes of extensions of $L$ by $M$ are classified by
$\Ext^1(L,M) = H^0(L \otimes M^{-1})$.
\end{lemma}

\begin{lemma}[{Teixidor \cite[Lemma 4.5]{Teixidor}}]
\label{lemma:deg-indecomp}
If $[E] \in {\mathcal E}(2,d)$, then $h^0(E) = 0$ if $d < 0$ and
$h^0(E) = d$ if $d > 0$, where $h^0(E) := \dim H^0(E)$.
\end{lemma}

We will now list the rank 2 vector bundles on an elliptic curve $X$.
Up to tensoring with a line bundle, we have the following vector
bundles:

\subsubsection{Rank 2 decomposable vector bundles}

Decomposable bundles have the form $L_1 \oplus L_2$, where $L_1$ and
$L_2$ are line bundles.
The instability degree of $L_1 \oplus L_2$ is
$|\deg L_1 - \deg L_2|$, so $L_1 \oplus L_2$ is strictly semistable if
$\deg L_1 = \deg L_2$ and unstable otherwise.
The proof of the following result is straightforward:

\begin{lemma}
Let $E = L_1 \oplus L_2$, where $L_1$ and $L_2$ are line bundles such
that $\deg L_1 > \deg L_2$.
At a point $p \in X$ the line $(L_1)_p$ is bad, and all other lines in
$\mathbbm{P}(E_p)$ are good.
\end{lemma}

A semistable decomposable bundle must have even degree, so after
tensoring with a suitable line bundle it has the form
$L \oplus L^{-1}$, where $L$ is a degree 0 line bundle.
There are two subclasses of such bundles: the four bundles
$L_i \oplus L_i$, and bundles $L \oplus L^{-1}$ such that
$L^2 \neq \calO$.
These two subclasses of semistable decomposable bundles have very
different properties:

\begin{lemma}
\label{lemma:Li-Li}
The bundle $L_i \oplus L_i$ has no good lines, and
$\Aut(L_i \oplus L_i) = GL(2,\Complex)$.
\end{lemma}

\begin{lemma}
\label{lemma:L-Lm1}
Let $E = L \oplus L^{-1}$, where $L$ is a degree 0 line bundle such
that $L^2 \neq \calO$.
The automorphism group $\Aut(E)$ is the subgroup of $GL(2,\Complex)$
matrices of the form
\begin{align}
  \nonumber
  \left(\begin{array}{cc}
    A & 0 \\
    0 & D
  \end{array}\right).
\end{align}
At a point $p \in X$ the lines $L_p$ and $(L^{-1})_p$ are bad, and all
other lines in $\mathbbm{P}(E_p)$ are good.
Given a pair of good lines $\ell_p, \ell_p' \in \mathbbm{P}(E_p)$,
there is a unique (up to rescaling by a constant) automorphism
$\phi \in \Aut(E)$ such that $\phi(\ell_p) = \phi(\ell_p')$.
\end{lemma}

The proofs of Lemmas \ref{lemma:Li-Li} and \ref{lemma:L-Lm1} are
straightforward and have been omitted.

\subsubsection{Rank 2 degree 0 indecomposable bundles}

There is a unique indecomposable bundle $F_2$ that can be
obtained via an extension of $\calO$ by $\calO$:
\begin{eqnarray}
  \begin{tikzcd}
    \label{ses:F2}
    0 \arrow{r} &
    \calO \arrow{r}{\alpha} &
    F_2 \arrow{r}{\beta} &
    \calO \arrow{r} &
    0.
  \end{tikzcd}
\end{eqnarray}
The bundle $F_2$ is strictly semistable, and hence has instability
degree 0.
The map
$\Jac(X) \rightarrow \calE(2,0)$, $[L] \mapsto [F_2 \otimes L]$ is
an isomorphism, so in particular
$F_2 \otimes L = F_2$ if and only if $L = \calO$.

\begin{lemma}
\label{lemma:L-F2}
If $L$ is a degree 0 line bundle, then
\begin{align}
  \nonumber
  \Hom(L,F_2) &=
  \left\{
  \begin{array}{ll}
    \Complex \cdot \alpha & \quad \mbox{if $L = \calO$,} \\
    0 & \quad \mbox{otherwise.}
  \end{array}
  \right.
  &
  \Hom(F_2,L) &=
  \left\{
  \begin{array}{ll}
    \Complex \cdot \beta & \quad \mbox{if $L = \calO$,} \\
    0 & \quad \mbox{otherwise.}
  \end{array}
  \right.
\end{align}
\end{lemma}

\begin{proof}
Apply $\Hom(L,-)$ to the short exact sequence (\ref{ses:F2}) to obtain
\begin{eqnarray}
  \label{les:F2-hom-L}
  \begin{tikzcd}
    0 \arrow{r} &
    \Hom(L,\calO) \arrow{r}{\alpha_*} &
    \Hom(L,F_2) \arrow{r}{\beta_*} &
    \Hom(L,\calO) \arrow{r}{\delta} &
    \Ext^1(L,\calO).
  \end{tikzcd}
\end{eqnarray}
If $L \neq \calO$, then $\Hom(L,\calO)=0$ and the long exact sequence
(\ref{les:F2-hom-L}) implies that $\Hom(L,F_2) = 0$.
So assume $L = \calO$.
Then $\Hom(L,\calO) = \Complex \cdot 1_{\calO}$.
To prove that $\Hom(L,F_2) = \Complex \cdot \alpha$, it suffices to
show that $\delta(1_{\calO}) \neq 0$.
Assume for contradiction that this is not the case.
Then the long exact sequence (\ref{les:F2-hom-L}) implies that
there is a morphism $f \in \Hom(L,F_2)$ such that
$\beta_*(f) = \beta \circ f = 1_{\calO}$.
It follows that the short exact sequence (\ref{ses:F2}) splits,
contradiction.

The claim regarding $\Hom(F_2,L)$ can be proven in a similar manner by
applying $\Hom(-,L)$ to the short exact sequence (\ref{ses:F2}).
\end{proof}

\begin{lemma}
\label{lemma:sections-F2}
Given a point $q \in X$, there are nonzero sections $t_0$ and $t_1$ of
$F_2 \otimes \calO(q)$ such that
\begin{enumerate}
\item
$H^0(F_2 \otimes \calO(q)) = \Complex \cdot \{t_0, t_1\}$,

\item
$\Div t_0 = 0$ and $\Div t_1 = q$,

\item
$t_1(p) \in \calO(q)_p$ for all $p \in X$, where
$\calO(q) \rightarrow F_2 \otimes \calO(q)$ is the unique degree 1
line subbundle of $F_2 \otimes \calO(q)$,

\item
$\{t_0(p), t_1(p)\}$ are linearly
independent for all $p \in X$ such that $p \neq q$.
\end{enumerate}
\end{lemma}

\begin{proof}
Tensoring $\alpha:\calO \rightarrow F_2$ with
$\calO(q)$ and precomposing with the unique (up to
rescaling by a constant) nonzero morphism
$\calO \rightarrow \calO(q)$, we obtain a section $t_1$ of
$F_2 \otimes \calO(q)$ such that $\Div t_1 = q$ and
$t_1(p) \in \calO(q)_p$ for all $p \in X$.
By Lemma \ref{lemma:deg-indecomp} we have that
$h^0(F_2 \otimes \calO(q)) = 2$, so we can choose
a section $t_0$ of $F_2 \otimes \calO(q)$ linearly independent from
$t_1$.

We claim that $\Div t_0 = 0$.
Assume for contradiction that this is not the case.
We obtain a subbundle
$\calO(\Div t_0) \rightarrow F_2 \otimes \calO(q)$, and by
semistability of $F_2 \otimes \calO(q)$ it follows that
$\Div t_0 = p$ for some $p \in X$.
We thus obtain a subbundle
$\calO(p) \rightarrow F_2 \otimes \calO(q)$, hence a subbundle
$\calO(p-q) \rightarrow F_2$.
But this contradicts Lemma \ref{lemma:L-F2} unless $p = q$, in which
case $t_0$ and $t_1$ are linearly dependent.

We claim $t_0(p)$ and $t_1(p)$ are linearly independent at all points
$p \in X$ such that $p \neq q$.
Assume for contradiction that they are linearly dependent at some
point $p$ distinct from $q$.
Then we can choose a nonzero section
$s = at_0 + bt_1$ of $F_2 \otimes \calO(q)$ for $a, b \in \Complex$
such that $s(p) = 0$.
We thus obtain a subbundle
$\calO(\Div s) \rightarrow F_2 \otimes \calO(q)$.
We have that $p \in \Div s$, so semistability of
$F_2 \otimes \calO(q)$ implies that $\Div s = p$.
We thus obtain a subbundle
$\calO(p) \rightarrow F_2 \otimes \calO(q)$, hence a subbundle
$\calO(p-q) \rightarrow F_2$.
But this contradicts Lemma \ref{lemma:L-F2}.
\end{proof}

\begin{lemma}
\label{lemma:F2}
The automorphism group $\Aut(F_2)$ is the subgroup of $GL(2,\Complex)$
matrices of the form
\begin{align}
  \nonumber
  \left(\begin{array}{cc}
    A & B \\
    0 & A
  \end{array}\right).
\end{align}
At a point $p \in X$ the line $\calO_p$ is bad, where
$\calO \rightarrow F_2$ is the unique degree 0 subbundle of $F_2$,
and all other lines in $\mathbbm{P}((F_2)_p)$ are good.
Given a pair of good lines $\ell_p, \ell_p' \in \mathbbm{P}((F_2)_p)$,
there is a unique (up to rescaling by a constant) automorphism
$\phi \in \Aut(E)$ such that $\phi(\ell_p) = \phi(\ell_p')$.
\end{lemma}

\begin{proof}
Apply $\Hom(-,F_2)$ to the short exact sequence (\ref{ses:F2}) to
obtain
\begin{eqnarray}
  \label{ses:F2-hom}
  \begin{tikzcd}
    0 \arrow{r} &
    \Hom(\calO, F_2) \arrow{r}{\beta^*} &
    \Hom(F_2, F_2) \arrow{r}{\alpha^*} &
    \Hom(\calO, F_2).
  \end{tikzcd}
\end{eqnarray}
Note that $\alpha^*(1_{F_2}) = \alpha$, so Lemma \ref{lemma:L-F2}
implies that $\alpha^*$ is surjective and thus the
sequence (\ref{ses:F2-hom}) is in fact short exact.
It follows that $\Hom(F_2,F_2) = \Complex\cdot \{1_{F_2},\, \eta\}$,
where $\eta := \beta^*(\alpha) = \alpha \circ \beta$.
Note that $\eta \circ \eta = 0$, so we can define an injective group
homomorphism $\Aut(F_2) \rightarrow GL(2,\Complex)$ by
\begin{align}
  \nonumber
  A\, 1_{F_2} + B\, \eta \mapsto
  \left(\begin{array}{cc}
    A & B \\
    0 & A
  \end{array}\right).
\end{align}

The fact that $\calO_p$ is the unique bad line of $(F_2)_p$ follows
from Lemma \ref{lemma:L-F2}.
Given good lines $\ell_p, \ell_p' \in \mathbbm{P}((F_2)_p)$,
choose nonzero vectors $v, v', w \in (F_2)_p$ such that
$v \in \ell_p$, $v' \in \ell_p'$, and $w \in \calO_p$.
Since $\ell_p \neq \calO_p$, it follows that $\{v,w\}$ is a basis
for $(F_2)_p$.
Define $a, b \in \Complex$ such that $v' = av + bw$; note that
since $\ell_p' \neq \calO_p$ we have that $a \neq 0$.
Define $c \in \Complex$ such that $\eta_p(v) = cw$;
note that since $\eta_p(w) = 0$ and $\eta_p \neq0$, we have that
$c \neq 0$.
Then $v' = \phi_p(v)$, where
$\phi = a 1_F + (b/c) \eta \in \Aut(F_2)$.
Hence $\phi(\ell_p) = \ell_p'$, and $\phi$ is clearly unique up to
rescaling by a constant.
\end{proof}

\subsubsection{Rank 2 degree 1 indecomposable bundles}

Given a point $p \in X$, there is a unique degree 1 indecomposable
bundle $G_2(p)$ that can be obtained via an extension of $\calO(p)$ by
$\calO$:
\begin{eqnarray}
  \nonumber
  \begin{tikzcd}
    0 \arrow{r} &
    \calO \arrow{r} &
    G_2(p) \arrow{r} &
    \calO(p) \arrow{r} &
    0.
  \end{tikzcd}
\end{eqnarray}
The bundle $G_2(p)$ is stable, with instability degree $-1$.
The map ${\mathcal E}(1,1) \rightarrow {\mathcal E}(2,1)$,
$[\calO(p)] \mapsto [G_2(p)]$ is an isomorphism, with inverse
isomorphism given by
$\det:{\mathcal E}(2,1) \rightarrow {\mathcal E}(1,1)$,
$[E] \rightarrow [\det E]$.
It follows that for any degree 0 divisor $D$ on $X$ we have that
\begin{align}
  \nonumber
  G_2(p + 2D) = G_2(p) \otimes \calO(D),
\end{align}
and in particular $G_2(p) \otimes L \cong G_2(p)$ if and only if
$L^2 = \calO$.

\begin{lemma}
We have that $\Aut(G_2(p)) = \Complex^\times$ consists only of trivial
automorphisms that scale the fibers by a constant factor.
\end{lemma}

\begin{proof}
This follows from the fact that $G_2(p)$ is stable.
\end{proof}

\begin{lemma}
\label{lemma:g2-subbundle}
Any degree 0 line bundle $L$ is a subbundle of $G_2(p)$ via a unique
(up to rescaling by a constant) inclusion map $L \rightarrow G_2(p)$.
\end{lemma}

\begin{proof}
Let $L$ be a degree 0 line bundle.
By Lemma \ref{lemma:deg-indecomp} we have that
$h^0(G_2(p) \otimes L^{-1}) = 1$, hence $G_2(p) \otimes L^{-1}$ has
a nonzero section $s$.
We thus obtain a subbundle
$\calO(\Div s) \rightarrow G_2(p) \otimes L^{-1}$.
By stability of $G_2(p) \otimes L^{-1}$, we must have
$\Div s = 0$.
Tensoring with $L$, we obtain a subbundle $L \rightarrow G_2(p)$.
The claim regarding uniqueness follows from the fact that
$h^0(G_2(p) \otimes L^{-1}) = 1$.
\end{proof}

\begin{corollary}
All lines of $G_2(p)$ are bad.
\end{corollary}

\begin{proof}
This is shown in Theorem \ref{theorem:hecke-G2}.
\end{proof}

\subsection{List of all possible single Hecke modifications}
\label{sec:elliptic-table-hecke}

Here we present a list of all possible Hecke modifications at a point
$p \in X$ of all possible rank 2 vector bundles on $X$, up to
tensoring with a line bundle.
We will parameterize Hecke modifications of a vector bundle $E$ at a
point $p$ in terms of lines $\ell_p \in \mathbbm{P}(E_p)$, as
described in Theorem \ref{theorem:single-isomorphism}.
Since we are always free to tensor a Hecke modification with a line
bundle, it suffices to consider vector bundles of nonnegative degree.

To construct the list, we will often use the following strategy.
By tensoring $E$ with a line bundle of sufficiently high degree if
necessary, we can assume without loss of generality that $E$ is
generated by global sections.
Consider a Hecke modification $\alpha:F \rightarrow E$ of $E$ at $p$
corresponding to a line $\ell_p := \im \alpha_p \in \mathbbm{P}(E_p)$.
Since we have assumed $E$ is generated by global sections, there
is a section $s$ of $E$ such that $s(p) \neq 0$ and $s(p) \in \ell_p$.
We then get a subbundle $\calO(\Div s) \rightarrow E$ and a
commutative diagram
\begin{eqnarray}
  \nonumber
  \begin{tikzcd}
    0 \arrow{r} &
    \calO(\Div s) \arrow{r} \arrow{d}{=}&
    F \arrow{r}\arrow{d}{\alpha} &
    L \otimes \calO(-p) \arrow{r}\arrow{d}{f} &
    0 \\
    0 \arrow{r} &
    \calO(\Div s) \arrow{r} &
    E \arrow{r} &
    L \arrow{r} &
    0,
  \end{tikzcd}
\end{eqnarray}
where $f$ is the unique (up to rescaling by a constant) nonzero
morphism $L \otimes \calO(-p) \rightarrow L$.
Thus $F$ is an extension of $L \otimes \calO(-p)$ by $\calO(\Div s)$,
and we can often use this information to determine $F$.

\subsubsection{Rank 2 bundles of degree greater than 1}

\begin{theorem}
\label{theorem:hecke-L-O}
Consider a bundle of the form
$L \oplus \calO$ for $L$ a line bundle of degree greater than 1
(unstable, instability degree $\deg L$).
The possible Hecke modifications are
  \begin{align}
    \nonumber
    L \oplus \calO \leftarrow
    \left\{
    \begin{array}{ll}
      L \oplus \calO(-p) &
      \quad \mbox{if $\ell_p = L_p$ (a bad line),} \\
      (L \otimes \calO(-p)) \oplus \calO &
      \quad \mbox{otherwise (a good line).}
    \end{array}
    \right.
  \end{align}
\end{theorem}

\begin{proof}
(1) The case $\ell_p = L_p$.
A Hecke modification
$\alpha:L \oplus \calO(-p) \rightarrow L \oplus \calO$ corresponding
to $\ell_p$ is
\begin{align}
  \nonumber
  \alpha = \left(\begin{array}{cc}
    1 & 0 \\
    0 & f
  \end{array}\right),
\end{align}
where $f$ is the unique (up to rescaling by a constant) nonzero
morphism $\calO(-p) \rightarrow \calO$.

(2) The case $\ell_p \neq L_p$.
Since $\deg L > 1$, we can choose a nonzero section $t$ of $L$
such that $t(p) \neq 0$.
Since $t$ is nonvanishing at $p$, we can choose a section
$s = (at, b)$ of $L \oplus \calO$ for
$a,b \in \Complex$ such that $s(p) \neq 0$ and $s(p) \in \ell_p$.
A Hecke modification
$\alpha:(L \otimes \calO(-p)) \oplus \calO \rightarrow L \oplus \calO$
corresponding to $\ell_p$ is
\begin{align}
  \nonumber
  \alpha = \left(\begin{array}{cc}
    f & at \\
    0 & b
  \end{array}\right),
\end{align}
where $f$ is the unique (up to rescaling by a constant) nonzero
morphism $L \otimes \calO(-p) \rightarrow L$.
\end{proof}

\subsubsection{Rank 2 bundles of degree 1}

\begin{theorem}
\label{theorem:hecke-Oq-O}
Consider the bundle
$\calO(q) \oplus \calO$ with $q \neq p$ (unstable, instability degree
1).
The possible Hecke modifications are
  \begin{align}
    \nonumber
    \calO(q) \oplus \calO \leftarrow
    \left\{
    \begin{array}{ll}
      \calO(q) \oplus \calO(-p) &
      \quad \mbox{if $\ell_p = \calO(q)_p$ (a bad line),} \\
      \calO(q-p) \oplus \calO &
      \quad \mbox{otherwise (a good line).}
    \end{array}
    \right.
  \end{align}
\end{theorem}

\begin{proof}
One can prove this result by using the fact that $\calO(q)$ has a
section $t$ such that $t(p) \neq 0$ and writing down explicit Hecke
modifications, as in the proof of Theorem \ref{theorem:hecke-L-O}.
\end{proof}

\begin{theorem}
\label{theorem:hecke-Op-O}
Consider the bundle
$\calO(p) \oplus \calO$ (unstable, instability degree 1).
The possible Hecke modifications are
\begin{align}
  \nonumber
      \calO(p) \oplus \calO \leftarrow
      \left\{
    \begin{array}{ll}
      \calO(p) \oplus \calO(-p) &
      \quad \mbox{if $\ell_p = \calO(p)_p$ (a bad line),} \\
      \calO \oplus \calO &
      \quad \mbox{if $\ell_p = \calO_p$ (a good line),} \\
      F_2 &
      \quad \mbox{otherwise (a good line).}
    \end{array}
    \right.
 \end{align}
\end{theorem}

\begin{proof}
(1) The case $\ell_p = \calO(p)_p$.
A Hecke modification
$\alpha:\calO(p) \oplus \calO(-p) \rightarrow
\calO(p) \oplus \calO$ is given by
\begin{align}
  \nonumber
  \alpha = \left(\begin{array}{cc}
    1 & 0 \\
    0 & f
  \end{array}\right),
\end{align}
where $f$ is the unique (up to rescaling by a constant) nonzero
morphism $\calO(-p) \rightarrow \calO$.

(2) The case $\ell_p = \calO_p$.
A Hecke modification
$\alpha:\calO \oplus \calO \rightarrow
\calO(p) \oplus \calO$ is given by
\begin{align}
  \nonumber
  \alpha = \left(\begin{array}{cc}
    t & 0 \\
    0 & 1
  \end{array}\right),
\end{align}
where $t$ is the unique (up to rescaling by a constant) nonzero
morphism $\calO \rightarrow \calO(p)$.

(3) The case $\ell_p \neq \calO(p)_p$ and $\ell_p \neq \calO_p$.
Pick a point $q \in X$ such that $q \neq p$.
Choose a nonzero section $t_0$ of $\calO(p+q)$ such that
$t_0(q) \neq 0$ and $t_0(p) \neq 0$.
Choose a nonzero section $t_1$ of $\calO(q)$.
Note that $\Div t_1 = q$.
Since $t_0(p) \neq 0$ and $t_1(p) \neq 0$,
we can define a section $s = (a t_0, b t_1)$ of
$\calO(p+q) \oplus \calO(q)$ for
$a, b \in \Complex$ such that $s(p) \neq 0$ and $s(p) \in \ell_p$.
Since $\ell_p \neq \calO(p)_p$ and $\ell_p \neq \calO_p$, it follows
that $a \neq 0$ and $b \neq 0$, thus $\Div s = 0$ and we obtain a
subbundle
$\calO(\Div s) = \calO \rightarrow \calO(p+q) \oplus \calO(q)$,
$1 \mapsto s$.
Thus we have a commutative diagram
\begin{eqnarray}
  \nonumber
  \begin{tikzcd}
    0 \arrow{r} &
    \calO \arrow{r} \arrow{d}{=}&
    F \arrow{r}\arrow{d}{\alpha} &
    \calO(2q) \arrow{r}\arrow{d} &
    0, \\
    0 \arrow{r} &
    \calO \arrow{r} &
    \calO(p+q) \oplus \calO(q) \arrow{r} &
    \calO(2q+p) \arrow{r} &
    0.
  \end{tikzcd}
\end{eqnarray}
The bundle $F$ cannot split, since there are no nonzero morphisms
$\calO(2q) \rightarrow
\calO(p+q) \oplus \calO(q)$,
hence $F$ is indecomposable.
Since $\det F = \calO(2q)$ it follows that
$F = F_2 \otimes \calO(q) \otimes L$ for a $2$-torsion line bundle
$L$.
We can compose $\alpha$ with projection onto the second summand
of $\calO(p+q) \oplus \calO(q)$ to obtain a nonzero morphism
$F \rightarrow \calO(q)$, so Lemma \ref{lemma:L-F2} implies
$L = \calO$ and $F = F_2 \otimes \calO(q)$.
\end{proof}

\begin{theorem}
\label{theorem:hecke-G2}
Consider the bundle $G_2(p)$ (stable, instability degree $-1$).
There is a canonical isomorphism
$\mathbbm{P}(G_2(p)_p) \rightarrow M^{ss}(X) \cong \CP^1$ given by
\begin{align}
  \nonumber
  \ell_p \mapsto [H(G_2(p),\ell_p)].
\end{align}
All lines of $G_2(p)$ are bad.
\end{theorem}

\begin{proof}
By Lemma \ref{lemma:g2-subbundle}, any degree 0 line bundle $L$ is a
subbundle of $G_2(p)$ via a unique (up to rescaling by a constant)
inclusion map $L \rightarrow G_2(p)$.
Thus we have a commutative diagram
\begin{eqnarray}
  \nonumber
  \begin{tikzcd}
    0 \arrow{r} &
    L \arrow{r} \arrow{d}{=}&
    F \arrow{r}\arrow{d}{\alpha} &
    L^{-1} \arrow{r}\arrow{d} &
    0, \\
    0 \arrow{r} &
    L \arrow{r} &
    G_2(p) \arrow{r} &
    L^{-1} \otimes \calO(p) \arrow{r} &
    0.
  \end{tikzcd}
\end{eqnarray}
Note that $\Ext^1(L^{-1},L) = H^0(L^{-2})$.
If $L^2 \neq \calO$ then $H^0(L^{-2}) = 0$, so $F$ splits, thus
$F = L \oplus L^{-1}$.

Now suppose $L^2 = \calO$.
We claim that $F$ is indecomposable; assume for contradiction that this
is not the case.
Then $F = L \oplus L$,
so $\alpha:F \rightarrow G_2(p)$ gives a map
$\calO \oplus \calO \rightarrow G_2(p) \otimes L$
that is an isomorphism away from $p$, so we obtain two linearly
independent sections of
$G_2(p) \otimes L$.
But by Lemma \ref{lemma:deg-indecomp} we have that
$h^0(G_2(p) \otimes L) = 1$, contradiction.
It follows that $F$ is indecomposable.
Since $\det F = \calO$, it follows that $F = F_2 \otimes M$ for
a $2$-torsion line bundle $M$.
Since we have a nonzero morphism $L \rightarrow F$,
Lemma \ref{lemma:L-F2} implies that $M = L$ and
$F = F_2 \otimes L$.

The above considerations show that we have a surjection
$\Jac(X) \rightarrow M^{ss}(X)$, $[L] \mapsto [F]$.
The vector bundle
$F$ is isomorphic to $H(G_2(p),\ell_p)$, where
$\ell_p \in \mathbbm{P}(G_2(p)_p)$ is the line corresponding to
$[G_2(p)_p \hmod{\alpha}{p} F] \in \calH^{tot}(X,G_2(p);p)$ under the
canonical isomorphism described in Theorem
\ref{theorem:single-isomorphism}, and we have a commutative diagram
\begin{eqnarray}
  \nonumber
  \begin{tikzcd}
    \Jac(X) \arrow{r}\arrow{d} &
    M^{ss}(X) \\
    \mathbbm{P}(G_2(p)_p). \arrow{ur}
  \end{tikzcd}
\end{eqnarray}
Here
$\Jac(X) \rightarrow \mathbbm{P}(G_2(p)_p)$ is given by
$[L] \mapsto L_p$ and
$\mathbbm{P}(G_2(p)_p) \rightarrow M^{ss}(X)$ is given by
$\ell_p \mapsto [H(G_2(p),\ell_p)]$.
Since $\Jac(X) \rightarrow M^{ss}(X)$ is surjective, we have that
$\Jac(X) \rightarrow \mathbbm{P}(G_2(p)_p)$ is surjective and
$\mathbbm{P}(G_2(p)_p) \rightarrow M^{ss}(X)$ is an isomorphism.
The surjectivity of $\Jac(X) \rightarrow \mathbbm{P}(G_2(p)_p)$
implies that all lines of $\mathbbm{P}(G_2(p)_p)$ are bad.
Since $G_2(p) = G_2(q) \otimes M$ for a suitable degree 0 line bundle
$M$, all lines of $G_2(p)$ are bad.
\end{proof}

\subsubsection{Rank 2 bundles of degree 0}

\begin{theorem}
\label{theorem:hecke-O-O}
Consider the bundle
$\calO \oplus \calO$ (strictly semistable, instability degree 0).
The possible Hecke modifications are
\begin{align}
  \nonumber
      \calO \oplus \calO \leftarrow
      \calO \oplus \calO(-p) \qquad
      \textup{for all $\ell_p$ (all lines are bad).}
  \end{align}
\end{theorem}
 
\begin{proof}
We can choose a section $s$ of $\calO \oplus \calO$ such that
$s(p) \neq 0$ and $s = \ell_p$.
We thus obtain a subbundle
$\calO \rightarrow \calO \oplus \calO$, $1 \mapsto s$ and a
commutative diagram
\begin{eqnarray}
  \nonumber
  \begin{tikzcd}
    0 \arrow{r} &
    \calO \arrow{r} \arrow{d}{=}&
    F \arrow{r}\arrow{d}{\alpha} &
    \calO(-p) \arrow{r}\arrow{d} &
    0, \\
    0 \arrow{r} &
    \calO \arrow{r} &
    \calO \oplus \calO \arrow{r} &
    \calO \arrow{r} &
    0.
  \end{tikzcd}
\end{eqnarray}
Since
$\Ext^1(\calO(-p),\calO) = H^0(\calO(-p)) = 0$,
we have that $F$ splits, thus
$F = \calO \oplus \calO(-p)$.
Alternatively, one can write down explicit Hecke modifications, as in
the proof of Theorem \ref{theorem:hecke-L-O}.
\end{proof}

\begin{theorem}
\label{theorem:hecke-L-Lm1}
Consider a bundle of the form $L \oplus L^{-1}$, where $L$ is a degree
0 line bundle such that $L^2 \neq \calO$ (strictly semistable,
instability degree 0).
The possible Hecke modifications are
\begin{align}
  \nonumber
  L \oplus L^{-1} \leftarrow
  \left\{
  \begin{array}{ll}
    L \oplus (L^{-1} \otimes \calO(-p)) &
    \quad \mbox{if $\ell_p = L_p$ (a bad line),} \\
    (L \otimes \calO(-p)) \oplus L^{-1} &
    \quad \mbox{if $\ell_p = (L^{-1})_p$ (a bad line),} \\
    G_2(p) \otimes \calO(-p) &
    \quad \mbox{otherwise (a good line).} \\
  \end{array}
  \right.
\end{align}
\end{theorem}

\begin{proof}
For $\ell_p = L_p$ or $\ell_p = (L^{-1})_p$, we can write down
explicit Hecke modifications, as in the proof of Theorem
\ref{theorem:hecke-L-O}.
So assume $\ell_p \neq L_p$ and $\ell_p \neq (L^{-1})_p$.
Choose a point $e \in X$ such that
$(L \oplus L^{-1}) \otimes \calO(e) =
\calO(q_1) \oplus \calO(q_2)$ for points $q_1, q_2 \in X$
distinct from $p$.
Since $L^2 \neq \calO$, it follows that $q_1 \neq q_2$.
Note that $q_1 + q_2 = 2e$.
Let $t_k$ be the unique (up to rescaling by a constant) nonzero
section of $\calO(q_k)$; note that $\Div t_k = q_k$.
We can define a section
$s = (a t_1, b t_2)$ of $\calO(q_1) \oplus \calO(q_2)$
for $a, b \in \Complex$ such that $s(p) \neq 0$ and $s(p) \in \ell_p$.
Since $\ell_p \neq L_p$ and $\ell_p \neq (L^{-1})_p$,
it follows that $a \neq 0$ and $b \neq 0$,
thus $\Div s = 0$.
We thus obtain a subbundle
$\calO(\Div s) = \calO \rightarrow \calO(q_1) \oplus \calO(q_2)$,
$1 \mapsto s$ and a commutative diagram
\begin{eqnarray}
  \nonumber
  \begin{tikzcd}
    0 \arrow{r} &
    \calO \arrow{r} \arrow{d}{=}&
    F \arrow{r}\arrow{d}{\alpha} &
    \calO(q_1 + q_2 - p) \arrow{r}\arrow{d} &
    0, \\
    0 \arrow{r} &
    \calO \arrow{r} &
    \calO(q_1) \oplus \calO(q_2) \arrow{r} &
    \calO(q_1 + q_2) \arrow{r} &
    0.
  \end{tikzcd}
\end{eqnarray}
There are no nonzero morphisms
$\calO(q_1 + q_2 - p) \rightarrow
\calO(q_1) \oplus \calO(q_2)$, so $F$ cannot split.
Since $\det F = \calO(q_1 + q_2 - p)$, we have that
$F = G_2(q_1 + q_2 - p) = G_2(2(e - p) + p) =
G_2(p) \otimes \calO(e-p)$.
\end{proof}

\begin{theorem}
\label{theorem:hecke-F2}
Consider the bundle $F_2$ (strictly semistable, instability degree 0).
The possible Hecke modifications are
\begin{align}
  \nonumber
  F_2 \leftarrow
  \left\{
  \begin{array}{ll}
    \calO \oplus \calO(-p) &
    \quad \mbox{if $\ell_p = \calO_p$ (a bad line),} \\
    G_2(p) \otimes \calO(-p) &
    \quad \mbox{otherwise (a good line),}
  \end{array}
    \right.
\end{align}
where $\calO \rightarrow F_2$ is the unique degree 0 line subbundle of
$F_2$.
\end{theorem}

\begin{proof}
(1) The case $\ell_p = \calO_p$.
We have a commutative diagram
\begin{eqnarray}
  \nonumber
  \begin{tikzcd}
    0 \arrow{r} &
    \calO \arrow{r} \arrow{d}{=}&
    F \arrow{r}\arrow{d}{\alpha} &
    \calO(-p) \arrow{r}\arrow{d} &
    0, \\
    0 \arrow{r} &
    \calO \arrow{r} &
    F_2 \arrow{r} &
    \calO \arrow{r} &
    0.
  \end{tikzcd}
\end{eqnarray}
Since $\Ext^1(\calO(-p),\calO) = H^0(\calO(-p)) = 0$, we have that $F$
splits, thus $F = \calO \oplus \calO(-p)$.

(2) The case $\ell_p \neq \calO_p$.
Pick a point $q \in X$ such that $q \neq p$.
Choose sections $t_0$ and $t_1$ of $F_2 \otimes \calO(q)$ as in
Lemma \ref{lemma:sections-F2}.
We can define a section
$s = a t_0 + b t_1$ of $F_2 \otimes \calO(q)$ for $a, b \in \Complex$
such that $s(p) \neq 0$ and $s(p) \in \ell_p$.
Since $\ell_p \neq \calO_p$ it follows that $a \neq 0$, thus
$\Div s = 0$.
We thus obtain a subbundle
$\calO(\Div s) = \calO \rightarrow F_2 \otimes \calO(q)$,
$1 \mapsto s$ and a commutative diagram
\begin{eqnarray}
  \nonumber
  \begin{tikzcd}
    0 \arrow{r} &
    \calO \arrow{r} \arrow{d}{=}&
    F \arrow{r}\arrow{d}{\alpha} &
    \calO(2q-p) \arrow{r}\arrow{d} &
    0, \\
    0 \arrow{r} &
    \calO \arrow{r} &
    F_2 \otimes \calO(q) \arrow{r} &
    \calO(2q) \arrow{r} &
    0.
  \end{tikzcd}
\end{eqnarray}
We claim that $F$ cannot split.
Assume for contradiction that $F$ splits, thus
$F = \calO \oplus \calO(2q-p)$.
Then we can precompose $\alpha$ with the inclusion
$\calO(2q-p) \rightarrow \calO \oplus \calO(2q-p)$ to obtain a nonzero
morphism $\calO(2q-p) \rightarrow F_2 \otimes \calO(q)$, contradicting
Lemma \ref{lemma:L-F2}.
Since $F$ does not split and $\det F = \calO(2q-p)$, we have that
$F = G_2(2q - p) = G_2(2(q-p)+p) = G_2(p) \otimes \calO(q - p)$.
\end{proof}

\subsubsection{Observations}

From this list, we make the following observations:

\begin{lemma}
\label{lemma:x-observations}
The following results hold for Hecke modifications of a rank 2 vector
bundle $E$ on an elliptic curve:
\begin{enumerate}
\item
  A Hecke modification of $E$ changes the instability degree by $\pm 1$.

\item
  Hecke modification of $E$ corresponding to a line
  $\ell_p \in \mathbbm{P}(E_p)$ changes the instability degree by $-1$
  if $\ell_p$ is a good line and $+1$ if $\ell_p$ is a bad line.

\item
  A generic Hecke modification of $E$ changes the instability degree
  by $-1$ unless $E$ has the minimum possible instability degree
  $-1$, in which case all Hecke modifications of $E$ change the
  instability degree by $+1$.
\end{enumerate}
\end{lemma}

\subsection{Moduli spaces $\calP_M^{tot}(X,m,n)$ and $\calP_M(X,m,n)$}
\label{sec:elliptic-moduli-space}

In Section \ref{sec:cp1-moduli-space} we defined a total space of
marked parabolic bundles
$\calP_M^{tot}(\CP^1,n) = \calP_M^{tot}(\CP^1,3,n)$ for rational
curves, and we showed that the Seidel--Smith space $\calY(S^2,2r)$
could be reinterpreted as the subspace
$\calP_M(\CP^1,2r) = \calP_M(\CP^1,3,2r)$ of
$\calP_M^{tot}(\CP^1,2r)$.
We now want to generalize the spaces $\calP_M^{tot}(\CP^1,n)$ and
$\calP_M(\CP^1,2r)$ to the case of an elliptic curve $X$.
There are obvious candidates: namely, the spaces
$\calP_M^{tot}(X,m,n)$ and $\calP_M(X,m,n)$ for some value of $m$,
which should be chosen to obtain the correct generalization.
One possibility is to use same value $m=3$ that we used for rational
curves.
But $m=1$ is also yields a reasonable generalization, as can be
understood from the following considerations.

In Appendix \ref{sec:vector-bundles} we define a
moduli space $M^{ss}(C)$ of semistable rank 2 vector bundles over a
curve $C$ with trivial determinant bundle, and in Appendix
\ref{sec:parabolic-bundles} we define
a moduli space $M^s(C,m)$ of stable rank 2 parabolic bundles
over a curve $C$ with trivial determinant bundle and $m$ marked
points.
Recall that for rational curves we chose $m=3$ marking lines because
we wanted $\calP_M^{tot}(\CP^1,m,n)$ to be isomorphic to the space of
parabolic bundles
$\calP^{tot}(\CP^1,\calO \oplus \calO,n)$ in which the underlying
vector bundle is $\calO \oplus \calO$, and
\begin{align}
  \nonumber
  \calP_M^{tot}(\CP^1,3,0) = M^s(\CP^1,3) = M^{ss}(\CP^1) =
  \{[\calO \oplus \calO]\}.
\end{align}
For an elliptic curve $X$, however, the corresponding spaces
$M^s(X,3)$ and $M^{ss}(X)$ are \emph{not} isomorphic: the space
$M^s(X,3)$ is a complex manifold of dimension 3, whereas $M^{ss}(X)$
is isomorphic to $\CP^1$.
Instead we have the following results, which can be viewed as
elliptic-curve analogs to Theorem \ref{theorem:Ms-cp1-3} and Corollary
\ref{cor:Ms-cp1-3-2} for rational curves:

\begin{theorem}
\label{theorem:elliptic-Ms-1}
The moduli space $M^s(X,1)$ consists of points $[E,\ell_q]$, where
$\ell_q$ is a good line and either
$E = F_2 \otimes L_i$ or
$E = L \oplus L^{-1}$ for $L$ a degree 0 line bundle such that
$L^2 \neq \calO$.
Given any two parabolic bundles of the form
$(E,\ell_q)$ and $(E,\ell_q')$ representing points of $M^s(X,1)$,
there is a unique (up to rescaling by a constant) automorphism
$\phi \in \Aut(E)$ such that $\phi(\ell_q) = \ell_q'$.
\end{theorem}

\begin{proof}
If $[E,\ell_q] \in M^s(X,1)$ then $E$ is semistable,
$\det E = \calO$, and $\ell_q$ is a good line.
Since $E$ is semistable and $\det E = \calO$, it must be
$L_i \oplus L_i$,
$F_2 \otimes L_i$, or
$L \oplus L^{-1}$ for $L$ a degree 0 line bundle such that
$L^2 \neq \calO$.
But Lemma \ref{lemma:Li-Li} states that $L_i \oplus L_i$ has no good
lines, so $E$ cannot be $L_i \oplus L_i$.
Lemmas \ref{lemma:L-Lm1} and \ref{lemma:F2} show that the remaining
two possibilities for $E$ do have good lines and also prove the
statement regarding unique automorphisms.
\end{proof}

\begin{corollary}
\label{corollary:map-bundle-elliptic}
The map $M^s(X,1) \rightarrow M^{ss}(X)$, $[E,\ell_q] \mapsto [E]$
is an isomorphism.
\end{corollary}

From these results, we see that there are \emph{two} natural
generalizations of $\calP_M^{tot}(\CP^1,n)$ to an elliptic curve.
The generalization of
$\calP_M^{tot}(\CP^1,3,0) = M^{ss}(\CP^1)$ is
\begin{align}
  \nonumber
  \calP_M^{tot}(X,1,0) = M^s(X,1) = M^{ss}(X),
\end{align}
which would lead us to choose $m=1$ marking lines.
The generalization of
$\calP_M^{tot}(\CP^1,3,0) = M^s(\CP^1,3)$ is
\begin{align}
  \nonumber
  \calP_M^{tot}(X,3,0) = M^s(X,3),
\end{align}
which would lead us to choose $m=3$ marking lines.
We will address the question of which of these values of $m$ yields
the correct generalization of the Seidel--Smith space in Section
\ref{sec:applications}.

From Theorem \ref{theorem:PMtot}, we have that $\calP_M^{tot}(X,1,n)$
is a $(\CP^1)^n$-bundle over $M^s(X,1) \cong \CP^1$.
We will show that this bundle is trivial.
To prove this result, we will use the marking line
of $\calP_M^{tot}(X,1,n)$ to canonically identify
$\mathbbm{P}(E_p)$ with $M^{ss}(X) \cong \CP^1$ for
$[E,\ell_{q_1},\ell_{p_1},\cdots,\ell_{p_n}] \in \calP_M^{tot}(X,1,n)$:

\begin{lemma}
\label{lemma:map-line-elliptic}
Fix a parabolic bundle $(E,\ell_q)$ such that
$[E,\ell_q] \in M^s(X,1)$, a point $p \in X$ such that $p \neq q$, and
a point $e \in X$ such that $p + q = 2e$.
There is a canonical isomorphism
$\mathbbm{P}(E_p) \rightarrow M^{ss}(X)$ given by
\begin{align}
  \nonumber
  \ell_p \mapsto [H(E,\ell_q,\ell_p) \otimes \calO(e)].
\end{align}
\end{lemma}

\begin{proof}
Theorem \ref{theorem:elliptic-Ms-1} implies that $\ell_q$ is a
good line and either
$E = F_2 \otimes L_i$ or
$E = L \oplus L^{-1}$ for $L$ a degree 0 line bundle such that
$L^2 \neq \calO$.
From Theorems \ref{theorem:hecke-L-Lm1} and \ref{theorem:hecke-F2}, it
follows that
\begin{align}
  \nonumber
  H(E,\ell_q) = G_2(q) \otimes \calO(-q) =
  G_2(p + 2(q-e)) \otimes \calO(-q) =
  G_2(p) \otimes \calO(-e).
\end{align}
The result now follows from Theorem \ref{theorem:hecke-G2}.
\end{proof}

Lemma \ref{lemma:map-line-elliptic} can be viewed as the
elliptic-curve analog to Lemma \ref{lemma:cp1-map-line} for
rational curves.
To perform calculations, it will be useful to explicitly evaluate the
map $\mathbbm{P}(E_p) \rightarrow M^{ss}(X)$ for bad lines
$\ell_p \in \mathbbm{P}(E_p)$.
In general, we prove:

\begin{lemma}
\label{lemma:map-line-elliptic-bad}
Fix distinct points $p,q \in X$ and a point $e \in X$ such that
$p + q = 2e$.
If $\ell_q$ is a good line, then
\begin{align}
  \nonumber
  H(L \oplus L^{-1}, \ell_q, L_p) \otimes \calO(e) &=
  M \oplus M^{-1},\,
  \textup{where $M = L \otimes \calO(p-e) = L \otimes \calO(e-q)$},
  \\
  \nonumber
  H(L \oplus L^{-1}, \ell_q,(L^{-1})_p ) \otimes \calO(e) &=
  M \oplus M^{-1},\,
  \textup{where $M = L \otimes \calO(q-e) = L \otimes \calO(e-p)$},
  \\
  \nonumber
  H(L \oplus L^{-1},L_q,(L^{-1})_p) \otimes \calO(e) &=
  M \oplus M^{-1},\,
  \textup{where $M = L \otimes \calO(q-e) = L \otimes \calO(e-p)$},
  \\
  \nonumber
  H(F_2, \ell_q, \calO_p) \otimes \calO(e) &= M \oplus M^{-1},\,
  \textup{where $M = \calO(p-e) = \calO(e-q)$}.
\end{align}
\end{lemma}

\begin{proof}
These results are straightforward calculations using the list of
Hecke modifications in Section \ref{sec:elliptic-table-hecke}.
As an example, we will prove the result involving $F_2$.
From Theorem \ref{theorem:hecke-F2} we have that
\begin{align}
  \nonumber
  H(F_2, \calO_p) = \calO \oplus \calO(-p).
\end{align}
Since $\ell_q$ is a good line, the bundle
$H(F_2, \calO_p,\ell_q) = H(F_2,\ell_q,\calO_p)$
must be semistable, and the result now follows from Theorem
\ref{theorem:hecke-Oq-O}.
\end{proof}

\begin{theorem}
\label{theorem:Pmtot-iso-elliptic}
There is a canonical isomorphism
$h:\calP_M^{tot}(X,1,n) \rightarrow (M^{ss}(X))^{n+1}$.
\end{theorem}

\begin{proof}
Define $h_0:\calP_M^{tot}(X,1,n) \rightarrow M^{ss}(X)$ by
\begin{align}
  \nonumber
  h_0([E,\ell_{q_1}, \ell_{p_1}, \cdots, \ell_{p_n}]) &= [E].
\end{align}
For $i=1,\cdots,n$, choose a point $e_i \in X$ such that
$q_1 + p_i = 2e_i$ and define
$h_i:\calP_M^{tot}(X,1,n) \rightarrow M^{ss}(X)$ by
\begin{align}
  \nonumber
  h_i([E,\ell_{q_1}, \ell_{p_1}, \cdots, \ell_{p_n}]) &=
  [H(E,\ell_{q_1},\ell_{p_i}) \otimes \calO(e_i)].
\end{align}
Then $h := (h_0, h_1, \cdots, h_n)$ is an isomorphism by
Theorem \ref{theorem:PMtot},
Corollary \ref{corollary:map-bundle-elliptic}, and Lemma
\ref{lemma:map-line-elliptic}.
\end{proof}

For $n=1$, the isomorphism
$h:\calP_M^{tot}(X,1,n) \rightarrow (M^{ss}(X))^{n+1}$ appears to be
closely related to an isomorphism
$M^{ss}(X,2) \rightarrow (\CP^1)^2$ defined in \cite{Vargas},
and our definition of $h$ was motivated by this isomorphism.

\subsection{Embedding $\calP_M(X,m,n) \rightarrow M^s(X,m+n)$}
\label{sec:elliptic-hecke-embedding}

We will now describe a canonical open embedding of the space
$\calP_M(X,m,n)$ into the space of stable
parabolic bundles $M^s(X,m+n)$.
We first need two Lemmas:

\begin{lemma}
\label{lemma:elliptic-embedding-a}
Let $(E,\ell_{p_1}, \cdots, \ell_{p_n})$ be a parabolic bundle over an
elliptic curve $X$ such that $E$ is semistable.
If the lines $\ell_{p_1}, \cdots, \ell_{p_n}$ are bad in the same
direction then $H(E,\ell_{p_1},\cdots,\ell_{p_n})$ has instability
degree $n$.
\end{lemma}

\begin{proof}
Up to tensoring with a line bundle, the bundle $E$ has one of three
forms:

(1) $E = \calO \oplus \calO$.
Since $\ell_{p_1}, \cdots, \ell_{p_n}$ are bad in the same direction,
we have that $\ell_{p_1} = \cdots = \ell_{p_n}$ under a global
trivialization of $E$ in which all the fibers are identified with
$\Complex^2$.
A sequence of Hecke modifications with
$\ell_{p_1} = \cdots = \ell_{p_n}$ is given by
\begin{align}
  \nonumber
  \calO \oplus \calO
  \hmod{\alpha_1}{p_1}
  \calO \oplus \calO(-p_1)
  \hmod{\alpha_2}{p_2} \cdots
  \hmod{\alpha_n}{p_n}
  \calO \oplus \calO(-p_1 - \cdots -p_n).
\end{align}
Here
$\calO \oplus \calO \hmod{\alpha_1}{p_1} \calO \oplus \calO(-p_1)$
is a Hecke modification corresponding to $\ell_{p_1}$, and for
$i=2,\cdots,n$ we define
\begin{align}
  \nonumber
  \alpha_i = \left(\begin{array}{cc}
    1 & 0 \\
    0 & f_i\\
    \end{array}\right),
\end{align}
where $f_i$ is the unique (up to rescaling by a constant) morphism
from $\calO(-p_1-\cdots-p_i)$ to $\calO(-p_1-\cdots-p_{i-1})$.
Thus
$H(\calO \oplus \calO,\ell_{p_1},\cdots,\ell_{p_n}) =
\calO \oplus \calO(-p_1 - \cdots - p_n)$ has instability degree $n$.

(2) $E = F_2$.
Then $\ell_{p_i} = \calO_{p_i}$ for $i = 1, \cdots, n$.
A sequence of Hecke modifications with
$\ell_{p_i} = \calO_{p_i}$ for $i = 1, \cdots, n$ is given by
\begin{align}
  \nonumber
  F_2
  \hmod{\alpha_1}{p_1}
  \calO \oplus \calO(-p_1)
  \hmod{\alpha_2}{p_2} \cdots
  \hmod{\alpha_n}{p_n}
  \calO \oplus \calO(-p_1 - \cdots -p_n),
\end{align}
where
$F_2 \hmod{\alpha_1}{p_1} \calO \oplus \calO(-p_1)$
is a Hecke modification corresponding to $\ell_{p_1} = \calO_{p_1}$
and $\alpha_i$ is as above for $i=1,\cdots, n$.
Thus
$H(F_2,\calO_{p_1},\cdots,\calO_{p_n}) =
\calO \oplus \calO(-p_1 - \cdots - p_n)$ has instability degree $n$.

(3) $E = L \oplus L^{-1}$ for a degree 0 line bundle $L$ such that
$L^2 \neq \calO$.
Then either
$\ell_{p_i} = L_{p_i}$ for $i = 1, \cdots, n$ or
$\ell_{p_i} = (L^{-1})_{p_i}$ for $i = 1, \cdots, n$.
A sequence of Hecke modifications with
$\ell_{p_i} = L_{p_i}$ for $i = 1, \cdots, n$ is given by
\begin{align}
  \nonumber
  L \oplus L^{-1}
  \hmod{\alpha_1}{p_1}
  L \oplus (L^{-1} \otimes \calO(-p_1))
  \hmod{\alpha_2}{p_2} \cdots
  \hmod{\alpha_n}{p_n}
  L \oplus (L^{-1} \otimes \calO(-p_1 - \cdots - p_n)),
\end{align}
where
\begin{align}
  \nonumber
  \alpha_i = \left(\begin{array}{cc}
    1 & 0 \\
    0 & 1 \otimes f_i\\
    \end{array}\right)
\end{align}
and $f_i$ is as above.
Thus
$H(L \oplus L^{-1},L_{p_1},\cdots,L_{p_n}) =
L \oplus (L^{-1}\otimes \calO(-p_1 - \cdots - p_n))$
has instability degree $n$.
We can write down a similar sequence of Hecke modifications to show
that
$H(L \oplus L^{-1}, (L^{-1})_{p_1},\cdots, (L^{-1})_{p_n}) =
(L \otimes \calO(-p_1 - \cdots - p_n)) \oplus L^{-1}$ has instability
degree $n$.
\end{proof}

Using Lemma \ref{lemma:elliptic-embedding-a} in place of Lemma
\ref{lemma:cp1-embedding-a}, the proofs of
Lemma \ref{lemma:cp1-embedding-b} and
Theorem \ref{theorem:cp1-hecke-embedding} for rational curves carry
over to the case of elliptic curves.
We thus obtain:

\begin{lemma}
\label{lemma:elliptic-embedding-b}
Let $(E,\ell_{p_1}, \cdots, \ell_{p_n})$ be a parabolic bundle over an
elliptic curve $X$ such that $E$ is semistable.
If
$H(E,\ell_{p_1},\cdots, \ell_{p_n})$ is semistable
$(E,\ell_{p_1},\cdots, \ell_{p_n})$ is semistable.
\end{lemma}

\begin{theorem}
\label{theorem:elliptic-hecke-embedding}
There is a canonical open embedding
$\calP_M(X,m,n) \rightarrow M^s(X,m+n)$.
\end{theorem}

\subsection{Examples}

Here we compute the space $\calP_M(X,1,n)$ for $n=0,1,2$.
We first make some definitions:

\begin{definition}
The \emph{Abel-Jacobi isomorphism} $X \rightarrow \Jac(X)$ is given by
$p \mapsto [\calO(p-e)]$ for a choice of basepoint $e \in X$.
\end{definition}

\begin{definition}
We define a map $\pi:\Jac(X) \rightarrow M^{ss}(X)$,
$[L] \mapsto [L \oplus L^{-1}]$.
\end{definition}

Note that $\pi$ is surjective and $\pi(L) = \pi(L^{-1})$, so
$\pi:\Jac(X) \cong X \rightarrow M^{ss}(X) \cong \CP^1$ is a 2:1
branched cover with four branch points $[L_i \oplus L_i]$
corresponding to the four 2-torsion line bundles $L_i$.

\begin{definition}
Given a degree 0 divisor $D$ on an elliptic curve $X$, define the
\emph{translation map}
$\tau_D:\Jac(X) \rightarrow \Jac(X)$,
$[L] \mapsto [L \otimes \calO(D)]$.
\end{definition}

\subsubsection{Calculate $\calP_M(X,1,0)$}

We have that
\begin{align}
  \nonumber
  \calP_M(X,1,0) = \calP_M^{tot}(X,1,0) = M^s(X,1) = M^{ss}(X) =
  \CP^1.
\end{align}
Note that the embedding
$\calP_M(X,1,0) \rightarrow M^s(X,1)$ defined in Theorem
\ref{theorem:elliptic-hecke-embedding} is an isomorphism.

\subsubsection{Calculate $\calP_M(X,1,1)$}
\label{sec:pm-x-1-1}

\begin{theorem}
\label{theorem:pm-x-1-1}
The map $g:\Jac(X) \rightarrow (M^{ss}(X))^2$,
$g = (\pi,\, \pi \circ \tau_{p_1-e_1})$ is injective and has image
the complement of $h(\calP_M(X,1,1))$,
where $h:\calP_M^{tot}(X,1,1) \rightarrow (M^{ss}(X))^2$ is the
isomorphism described in Theorem \ref{theorem:Pmtot-iso-elliptic}.
\end{theorem}

\begin{proof}
First we show that $g$ has image the complement of $h(\calP_M(X,1))$
in $(M^{ss}(X))^2$
Take a point $[E,\ell_{q_1},\ell_{p_1}] \in \calP_M^{tot}(X,1,1)$.
From Theorems
\ref{theorem:hecke-L-Lm1},
\ref{theorem:hecke-F2}, and
\ref{theorem:elliptic-Ms-1},
it follows that $H(E,\ell_{p_1}) = G_2(p_1) \otimes \calO(-p_1)$ is
stable if $\ell_{p_1}$ is a good line, and
$H(E,\ell_{p_1})$ is unstable if $\ell_{p_1}$ is a bad line.
So the complement of $\calP_M(X,1,1)$ in
$\calP_M^{tot}(X,1,1)$ consists of isomorphism classes
$[E,\ell_{q_1},\ell_{p_1}]$ such that $\ell_{p_1}$ is a bad line, and is
thus given by the union of the sets
\begin{align}
  \nonumber
  S_1 &= \{[L \oplus L^{-1},\ell_{q_1},L_{p_1}] \mid
    [L] \in \Jac(X),\,L^2 \neq \calO\}, \\
  \nonumber
  S_2 &= \{[L \oplus L^{-1},\ell_{q_1},(L^{-1})_{p_1}] \mid
    [L] \in \Jac(X),\,L^2 \neq \calO\}, \\
  \nonumber
  S_3 &= \{[F_2 \otimes L_i, \ell_{q_1}, (L_i)_{p_1}] \mid
  i=1,2,3,4\},
\end{align}
where in each case $\ell_{q_1}$ is a good line.
From Lemma \ref{lemma:map-line-elliptic-bad}, it follows that
the complement of $h(\calP_M(X,1,1))$ in $(M^{ss}(X))^2$ is
given by the union of the sets
\begin{align}
  \nonumber
  h(S_1) &=
  \{(\pi([L]),\,(\pi \circ \tau_{p_1 - e_1})([L])) \mid
  [L] \in \Jac(X),\,
  L^2 \neq \calO\}, \\
  \nonumber
  h(S_2) &=
  \{(\pi([L]),\,(\pi \circ \tau_{e_1 - p_1})([L])) \mid
  [L] \in \Jac(X),\,
  L^2 \neq \calO\}, \\
  \nonumber
  h(S_3) &=
  \{(\pi([L_i]),\,(\pi \circ \tau_{p_1 - e_1})([L_i])) \mid
  i=1,2,3,4\}.
\end{align}
Note that
\begin{align}
  \nonumber
  (\pi([L]),\,(\pi \circ \tau_{e_1 - p_1})([L])) =
  (\pi([L^{-1}]),\,(\pi \circ \tau_{p_1 - e_1})([L^{-1}])),
\end{align}
so $h(S_1) = h(S_2)$, and we have that
\begin{align}
  \nonumber
  h(S_1) \cup h(S_2) \cup h(S_3) =
  \{(\pi([L]),\,(\pi \circ \tau_{p_1 - e_1})([L])) \mid
  [L] \in \Jac(X)\} = \im g.
\end{align}
So the image of $g$ is the complement of $h(\calP_M(X,1,1))$ in
$(M^{ss}(X))^2$

Next we show that $g$ is injective.
If $g(L) = g(L')$, then projection onto the first factor of
$(M^{ss}(X))^2$ gives $\pi(L) = \pi(L')$, hence either
$L' = L$ or $L' = L^{-1}$.
Suppose $L' = L^{-1}$.
Then projection onto the second factor of $(M^{ss}(X))^2$ gives
$\pi(L \otimes \calO(p_1-e_1)) = \pi(L^{-1} \otimes \calO(p_1-e_1))$,
hence either
$L \otimes \calO(p_1-e_1) = L^{-1} \otimes \calO(p_1-e_1)$ or
$L \otimes \calO(p_1-e_1) = L \otimes \calO(e_1-p_1)$.
The first case implies $L = L^{-1}$.
The second case implies
$2p_1 = 2e_1$, but we chose $e_1$ such that $p_1 + q_1 = 2e_1$, hence
$p_1 = q_1$, contradiction.
Thus $L' = L$, so $g$ is injective.
\end{proof}

If we use the Abel-Jacobi isomorphism to identify $X$ and $\Jac(X)$,
the (canonical) isomorphism
$h:\calP_M^{tot}(X,1,1) \rightarrow (M^{ss}(X))^2$ to identify
$\calP_M^{tot}(X,1,1)$ and $(M^{ss}(X))^2$, and the (noncanonical)
isomorphism $M^{ss}(X) \cong \CP^1$ to identify $M^{ss}(X)$ and
$\CP^1$, we find that
\begin{align}
  \nonumber
  \calP_M(X,1,1) &= (\CP^1)^2 - g(X).
\end{align}

\begin{remark}
Using results from the proof of Theorem \ref{theorem:pm-x-1-1}, it
is straightforward to show that
\begin{align}
  \nonumber
  M^{ss}(X,2) &= \calP_M^{tot}(X,1,1) = (\CP^1)^2, &
  M^s(X,2) &= \calP_M(X,1,1) = (\CP^1)^2 - g(X).
\end{align}
These calculations reproduce the results of \cite{Vargas} for
$M^{ss}(X,2)$ and $M^s(X,2)$.
\end{remark}

\subsubsection{Calculate $\calP_M(X,1,2)$}
\label{sec:pm-x-1-2}

The same method that we used to prove Theorem \ref{theorem:pm-x-1-1}
can be used to calculate $\calP_M(X,1,2)$:

\begin{theorem}
\label{theorem:pm-x-1-2}
The map $f:\Jac(X) \rightarrow (M^{ss}(X))^3$,
$f = (\pi,\, \pi \circ \tau_{p_1-e_1},\, \pi \circ \tau_{p_2 - e_2})$ is
injective and has image the complement of $h(\calP_M(X,1,2))$,
where $h:\calP_M^{tot}(X,1,2) \rightarrow (M^{ss}(X))^3$ is the
isomorphism described in Theorem \ref{theorem:Pmtot-iso-elliptic}.
\end{theorem}

If we use the Abel-Jacobi isomorphism to identify $X$ and $\Jac(X)$,
the (canonical) isomorphism
$h:\calP_M^{tot}(X,1,2) \rightarrow (M^{ss}(X))^3$ to identify
$\calP_M^{tot}(X,1,2)$ and $(M^{ss}(X))^3$, and the (noncanonical)
isomorphism $M^{ss}(X) \cong \CP^1$ to identify $M^{ss}(X)$ and
$\CP^1$, we find that
\begin{align}
  \nonumber
  \calP_M(X,1,2) &= (\CP^1)^3 - f(X).
\end{align}

\section{Possible applications to topology}
\label{sec:applications}

Here we briefly outline some possible applications of our results to
topology.
We have proposed complex manifolds $\calP_M(X,1,2r)$ and
$\calP_M(X,3,2r)$ as candidates for a space $\calY(T^2,2r)$ that
generalizes the Seidel--Smith space $\calY(S^2,2r)$ and that could
potentially be used to construct symplectic Khovanov homology for lens
spaces.
The following tasks remain to be done to complete the construction:
\begin{enumerate}
\item
We need to define a suitable symplectic form on $\calP_M(X,m,2r)$.
One possibility is to pull back the canonical symplectic form on
$M^s(X,2r+m)$ using the open embedding
$\calP_M(X,m,2r) \rightarrow M^s(X,2r+m)$.

\item
We need to find a suitable action of the mapping class group
$\MCG_{2r}(T^2)$ on $\calP_M(X,m,2r)$ that is defined up to
Hamiltonian isotopy.
Such an action might be obtained via symplectic monodromy by viewing
$\calP_M(X,m,2r)$ as the fiber of a larger space that fibers over
the moduli space of genus 1 curves with marked points.
Such an approach would be analogous to the way Seidel and Smith obtain
an action of the braid group on the Seidel--Smith space via monodromy
around loops in the configuration space \cite{Seidel}, and similar
methods are used to define mapping class group actions for
constructing Reshetikhin-Turaev-Witten invariants \cite{Bakalov}.

\item
We need to define suitable Lagrangians $L_r$ in $\calP_M(X,m,2r)$
corresponding to $r$ unknotted arcs in a solid torus.
For $\calP_M(X,1,2r)$ we would expect $L_r$ to be homeomorphic to
$S^1 \times (S^2)^r$, and for
$\calP_M(X,3,2r)$ we would expect $L_r$ to be homeomorphic to
$S^3 \times (S^2)^r$.
Perhaps such Lagrangians can be constructed in a manner analogous to
Seidel--Smith by viewing $\calP_M(X,m,2r)$ as the fiber of a larger
space that fibers over the configuration space $\Conf_{2r}(X)$ of $2r$
unordered points in $X$ and looking for vanishing cycles as points are
successively brought together in pairs.

\item
We need to prove that the Lagrangian Floer homology of a knot $K$ in a
lens space is $Y$ invariant under different Heegaard splittings of
$(Y,K)$ into solid tori.

\item
We need to verify that our construction of symplectic Khovanov
homology reproduces ordinary Khovanov homology for the case of knots
in $S^3$.
\end{enumerate}

Several of our results appear to be related to a possible connection
between Khovanov homology and symplectic instanton homology.
Roughly speaking, symplectic instanton homology is defined as follows.
Given a knot $K$ in a 3-manifold $Y$, one Heegaard-splits $(Y,K)$
along a Heegaard surface $\Sigma$ to obtain handlebodies $U_1$ and
$U_2$.
Each handlebody $U_i$ contains a portion of the knot
$A_i := U_i \cap K$ consisting of $r$ arcs that pairwise
connect points $p_1, \cdots, p_{2r}$ in $\Sigma$.
To the marked surface $(\Sigma,p_1, \cdots, p_{2r})$ one associates a
character variety $R(\Sigma,2r)$, which has the structure of a
symplectic manifold, and to the handlebody pairs $(U_i,A_i)$ one
associates Lagrangians $L_i \subset R(\Sigma,2r)$.
The symplectic instanton homology of $(Y,K)$ is then defined to be the
Lagrangian Floer homology of the pair of Lagrangians $(L_1,L_2)$.

In fact, there are several technical difficulties that must be
overcome in order to get a well-defined homology theory.
For example, one needs to introduce a \emph{framing} in order to
eliminate singularities in the character variety $R(\Sigma,2r)$.
One way to introduce a framing is by replacing the knot $K$ with
$K \cup \Theta$, where $\Theta$ is the theta graph shown in Figure
\ref{fig:graphs}(a); this approach is described in \cite{Horton}.
We Heegaard-split $(Y,K \cup \Theta)$ along a Heegaard surface
$\Sigma$ that is chosen to transversely
intersect each edge $e_i$ of the theta graph in a single point $q_i$.
The marked Heegaard surface is now
$(\Sigma,q_1,q_2,q_3,p_1,\cdots,p_{2r})$, corresponding to the
character variety $R(\Sigma,2r+3)$, and the handlebody pairs are now
$(U_i,A_i \cup \epsilon_i)$, where $\epsilon_i$ is the epsilon
graph shown in Figure \ref{fig:graphs}(b).
The character variety $R(\Sigma,2r+3)$ has the structure of a
symplectic manifold that is symplectomorphic to the moduli space of
stable parabolic bundles $M^s(C,2r+3)$, where $C$ is any complex curve
homeomorphic to $\Sigma$.
(The space $M^s(C,2r+3)$ has a canonical symplectic form.)

\begin{figure}
  \centering
  \includegraphics[scale=0.5]{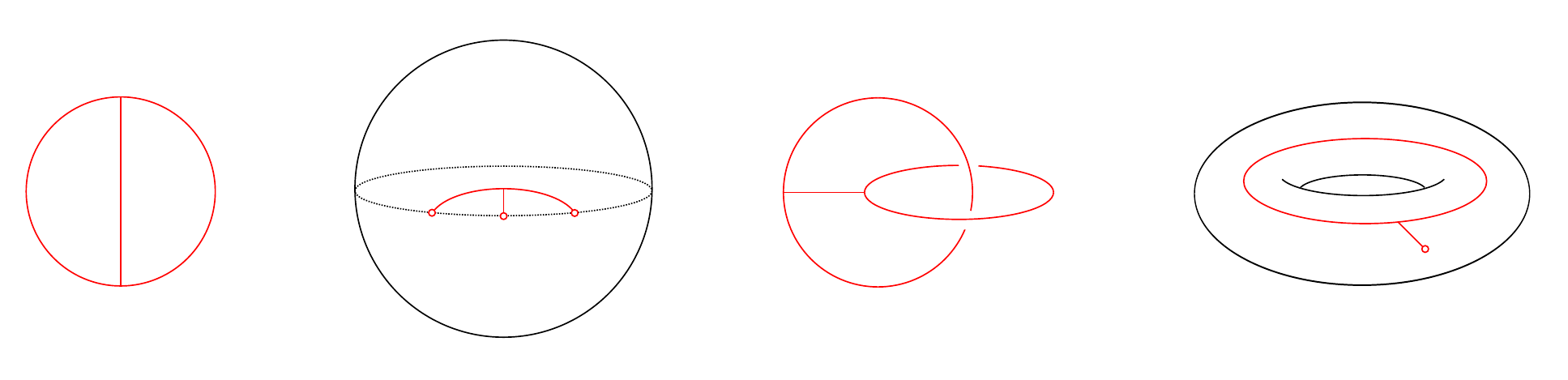}
  \caption{
    \label{fig:graphs}
    (a) The graph $\Theta$.
    (b) The graph $\epsilon$ in $B^3$.
    (c) The graph $D_p \subset S^3$ for $p=1$.
    (d) The graph $\sigma$ in $S^1 \times D^2$.
  }
\end{figure}

Symplectic instanton homology can be viewed as a symplectic
replacement for singular instanton homology, a knot homology theory
defined using gauge theory, and the two theories are conjectured to be
isomorphic.
This is an example of an Atiyah-Floer conjecture; such conjectures broadly
relate Floer-theoretic invariants defined using gauge theory to
corresponding invariants defined using symplectic topology.
Kronheimer and Mrowka constructed a spectral sequence from
Khovanov homology to singular instanton homology \cite{Kronheimer-3},
and the embedding $\calP_M(\CP^1,2r) \rightarrow M^s(\CP^1,2r+3)$
described in Theorem \ref{sec:cp1-hecke-embedding} suggests that it
may be possible to construct an analogous spectral sequence from
symplectic Khovanov homology to symplectic instanton homology.
(This idea for constructing a spectral sequence was suggested to the
author by Ivan Smith and Chris Woodward.)
If so, perhaps the fact that we have an embedding
$\calP_M(X,3,2r) \rightarrow M^s(X,2r+3)$, as described in Theorem
\ref{sec:elliptic-hecke-embedding}, is evidence that the correct
generalization of the Seidel--Smith space is $\calP_M(X,3,2r)$.
Indeed, a calculation of the Lagrangian intersection $L_1 \cap L_2$ in
the traceless character variety $R(T^2,3)$ for $(S^3,\Theta)$ yields a
single point, a space whose cohomology is the correct Khovanov
homology for the empty knot, and
calculations of the Lagrangian intersections in the traceless
character variety $R(T^2,5)$ for $(S^3,\textup{unknot} \cup \Theta)$
and $(S^3,\textup{trefoil} \cup \Theta)$
yield $S^2$ and $\RP^3 \amalg S^2$, spaces whose cohomology gives the
correct Khovanov homology for the unknot and trefoil.
Based on these speculations, we make the following conjectures:

\begin{conjecture}
The space $\calP_M(C,3,2r)$ is the correct generalization of the
Seidel--Smith space $\calY(S^2,2r)$ to a curve $C$ of arbitrary genus.
\end{conjecture}

\begin{conjecture}
Given a curve $C$ of arbitrary genus, there is a canonical open
embedding $\calP_M(C,m,n) \rightarrow M^s(C,m+n)$.
\end{conjecture}

On the other hand, perhaps the embedding
$\calP_M(X,1,2r) \rightarrow M^s(X,2r+1)$ described in Theorem
\ref{sec:elliptic-hecke-embedding} is related to a spectral
sequence from a Khovanov-like knot homology theory to symplectic
instanton homology defined with a novel framing.
Rather than using a theta graph, perhaps for the lens space $L(p,q)$
one could introduce a framing specific to that lens space by
using a $p$-linked dumbbell graph $D_p$, as shown in Figure
\ref{fig:graphs}(c) for the case $p=1$.
There is a unique edge $e_1$ of the dumbbell graph that connects the
two vertices, and one can choose a Heegaard surface $\Sigma$ that
transversely intersects $e_1$ in a single point $q_1$.
The marked Heegaard surface is now
$(\Sigma,q_1,p_1,\cdots,p_{2r})$, corresponding to the character
variety $R(\Sigma,2r+1)$, and the handlebody pairs $(U_i,A_i)$ are now
$(U_i,A_i \cup \sigma_i)$, where $\sigma_i$ is the sigma graph
shown in Figure \ref{fig:graphs}(d).
The character variety $R(\Sigma,2r+1)$ has the structure of a
symplectic manifold that is symplectomorphic to the moduli space of
stable parabolic bundles $M^s(X,2r+1)$, which is the codomain
of the embedding $\calP_M(X,1,2r) \rightarrow M^s(X,2r+1)$.

\begin{appendix}

% Don't list subsections of Appendix in the table-of-contents:
\addtocontents{toc}{\protect\setcounter{tocdepth}{1}}

\section{Vector bundles}
\label{sec:vector-bundles}

Here we briefly review some results on holomorphic vector bundles and
their moduli spaces that we will use throughout the paper.
Some useful references on vector bundles are
\cite{LePotier,Schaffhauser,Teixidor,Tu}.

\begin{definition}
The {\em slope} of a holomorphic vector bundle $E$ over a curve $C$ is
$\slope E := (\deg E)/(\rank E) \in \Rats$.
\end{definition}

\begin{definition}
A holomorphic vector bundle $E$ over a curve $C$ is
{\em stable} if $\slope F < \slope E$ for any proper
subbundle $F \subset E$,
{\em semistable} if $\slope F \leq \slope E$ for any proper
subbundle $F \subset E$,
\emph{strictly semistable} if it is semistable but not stable, and
{\em unstable} if there is a proper subbundle $F \subset E$ such that
$\slope F > \slope E$.
\end{definition}

If $E$ is a stable vector bundle, then $\Aut(E) = \Complex^\times$
consists only of trivial automorphisms that scale the fibers by a
constant factor.

\begin{definition}
\label{def:jordan-holder-filtration}
Given a semistable vector bundle $E$, a
{\em Jordan-H\"{o}lder filtration} of $E$ is a filtration
\begin{align}
  \nonumber
  F_0 = 0 \subset F_1 \subset F_2 \subset \cdots \subset F_n = E
\end{align}
of $E$ by subbundles $F_i \subset E$ for $i=0, \cdots, n$ such that
the composition factors $F_i/F_{i-1}$ are stable and
$\slope F_i/F_{i-1} = \slope E$ for $i=1, \cdots, n$.
\end{definition}

Every semistable vector bundle $E$ admits a
Jordan-H\"{o}lder filtration.
The filtration is not unique, but the composition factors
$F_i/F_{i-1}$ for $i=1,\cdots,n$ are independent (up to
permutation) of the choice of filtration.

\begin{definition}
Given a semistable holomorphic vector bundle $E$ over a curve $C$, the
{\em associated graded} vector bundle $\gr E$ is defined to be
\begin{align}
  \nonumber
  \gr E = \bigoplus_{i=1}^n F_i/F_{i-1},
\end{align}
where $F_0 = 0 \subset F_1 \subset \cdots \subset F_n = E$ is a
Jordan-H\"{o}lder filtration of $E$.
\end{definition}

The bundle
$\gr E$ is independent (up to isomorphism) of the choice of
filtration, and $\slope(\gr E) = \slope E$.

\begin{definition}
Two semistable vector bundles are said to be
{\em $S$-equivalent} if their associated graded bundles are
isomorphic.
\end{definition}

\begin{example}
In Section
\ref{sec:elliptic-vector-bundles}
we define a strictly semistable rank 2 vector bundle $F_2$ and a
stable rank 2 vector bundle $G_2(p)$ over an elliptic curve $X$.
A Jordan-H\"{o}lder filtration of $F_2$ is $\calO \subset F_2$,
and the associated graded bundle is
$\gr F_2 = \calO \oplus \calO$.
It follows that $F_2$ and $\calO \oplus \calO$ are $S$-equivalent.
A Jordan-H\"{o}lder filtration of $G_2(p)$ is just $G_2(p)$, and the
associated graded bundle is $\gr G_2(p) = G_2(p)$.
\end{example}

Isomorphic bundles are $S$-equivalent.
For rational curves, $S$-equivalent bundles are isomorphic, but this
is not true in general.
For example, on an elliptic curve the bundles
$F_2$ and $\calO \oplus \calO$ are $S$-equivalent but not
isomorphic.

\begin{definition}
We define $M^{ss}(C)$ (respectively $M^s(C)$) to be the moduli space
of semistable (respectively stable) rank 2 holomorphic vector bundles
over curve $C$ with trivial determinant bundle, mod $S$-equivalence.
This space is defined in \cite{Seshadri}; see also \cite{Mukai}.
\end{definition}

\begin{remark}
An alternative way of interpreting $M^{ss}(C)$ is as the
space of flat $SU(2)$-connections on a trivial rank 2 complex vector
bundle $E \rightarrow C$, mod gauge transformations.
Yet another way of interpreting the space $M^{ss}(C)$ is as the
character variety $R(C)$ of conjugacy classes of group
homomorphisms $\pi_1(C) \rightarrow SU(2)$.
We will not use these interpretations here.
\end{remark}

The moduli space $M^s(C)$ has the structure of a complex manifold of
dimension $3(g-1)$, where $g$ is the genus of the curve $C$.
The space $M^s(C)$ carries a canonical symplectic form, which is
obtained by interpreting $M^s(C)$ as a Hamiltonian reduction of a
space of $SU(2)$-connections.

\begin{example}
\label{example:cp1-vb-moduli-space}
For rational curves, the bundle $\calO \oplus \calO$ is the unique
semistable rank 2 bundle with trivial determinant bundle, and there
are no stable rank 2 bundles, so
\begin{align}
  \nonumber
  M^{ss}(\CP^1) &= \{pt\} = \{[\mathcal O \oplus \calO]\}, &
  M^{s}(\CP^1) &= \varnothing.
\end{align}
\end{example}

\begin{example}
\label{example:elliptic-vb-moduli-space}
For an elliptic curve $X$, semistable rank 2 bundles with trivial
determinant bundle have the form
$L \oplus L^{-1}$, where $L$ is a degree 0 line bundle, or
$F_2 \otimes L_i$, where $L_i$ for $i=1,\cdots, 4$ are the four
2-torsion line bundles.
The bundles $L_i \oplus L_i$ and $F_2 \otimes L_i$ are $S$-equivalent.
The bundles $L \oplus L^{-1}$ and $L^{-1} \oplus L$ are
isomorphic, hence $S$-equivalent.
There are no stable rank 2 bundles with trivial determinant bundle.
As shown in \cite{Tu}, we have that
\begin{align}
  \nonumber
  M^{ss}(X) &= \{[L \oplus L^{-1}] \mid [L] \in \Jac(X)\} = \CP^1, &
  M^{s}(X) &= \varnothing.
\end{align}
\end{example}

\section{Parabolic bundles}
\label{sec:parabolic-bundles}

Here we briefly review some results on parabolic bundles and
their moduli spaces that we will use throughout the paper.
Some useful references on parabolic bundles are
\cite{Mehta-Seshadri, Nagaraj}.

\subsection{Definition of a parabolic bundle}

The concept of a parabolic bundle was introduced in
\cite{Mehta-Seshadri}:

\begin{definition}
A {\em parabolic bundle} of rank $r$ on a curve $C$ consists of
following data:
\begin{enumerate}
\item
  A rank $r$ holomorphic vector bundle $\pi_E:E \rightarrow C$.

\item
  Distinct marked points $(p_1, \cdots, p_n) \in C^n$.

\item
  For each marked point $p_i$, a flag of vector spaces $E_{p_i}^j$ in
  the fiber $E_{p_i} = \pi_E^{-1}(p_i)$ over the point $p_i$:
  \begin{align}
    \nonumber
    E_{p_i}^0 = 0 \subset E_{p_i}^1 \subset E_{p_i}^2 \subset \cdots
    \subset E_{p_i}^{s_i} = E_{p_i}.
  \end{align}
\item
  For each marked point $p_i$, a strictly decreasing list of
  \emph{weights} $\lambda_{p_i}^j \in \Reals$:
  \begin{align}
    \nonumber
    \lambda_{p_i}^1 > \lambda_{p_i}^2 > \cdots > \lambda_{p_i}^{s_i}.
  \end{align}
\end{enumerate}
We refer the data of the marked points, the flags, and the weights as
a {\em parabolic structure} on $E$.
We refer to the data of just the marked points and flags, without the
weights, as a \emph{quasi-parabolic structure} on $E$.
We define the \emph{multiplicity} of the weight
$\lambda_{p_i}^j$ to be
$m_{p_i}^j := \dim(E_{p_i}^j) - \dim(E_{p_i}^{j-1})$.
The definition of a parabolic bundle given in \cite{Mehta-Seshadri}
differs slightly from our definition, in that the marked points are
unordered and the weights are required to lie in the range $[0,1)$.
\end{definition}

\begin{definition}
Two parabolic bundles with underlying vector bundles $E$ and $F$ are
{\em isomorphic} if the marked points and weights for the two bundles
are the same, and there is a bundle isomorphism $\alpha:E \rightarrow
F$ that carries each flag of $E$ to the corresponding flag of $F$;
that is, $\alpha(E_{p_i}^j) = F_{p_i}^j$ for $j=1,\cdots, s_i$ and
$i=1, \cdots, n$.
\end{definition}

\begin{definition}
The {\em rank} of a parabolic bundle is the rank of its underlying
vector bundle.
\end{definition}

\begin{definition}
The {\em parabolic degree} and {\em parabolic slope} of a parabolic
bundle ${\mathcal E}$ with underlying vector bundle $E$ are defined to be
\begin{align}
  \nonumber
  \pdeg {\mathcal E} &=
  \deg E +
  \sum_{i=1}^n \sum_{j=1}^{s_i} m_{p_i}^j \lambda_{p_i}^j \in \Reals, &
  \pslope {\mathcal E} &=
  (\pdeg {\mathcal E})/(\rank {\mathcal E}) \in \Rats.
\end{align}
\end{definition}

We will not need the full generality of the concept of a parabolic
bundle; rather, we will consider only parabolic line bundles and rank
2 parabolic bundles of a certain restricted form.

First we consider parabolic line bundles.
For such bundles there is no flag data, so the parabolic structure is
specified by a list of marked points
$p_1, \cdots, p_n$ and a list of weights
$\lambda_{p_1}^1, \cdots, \lambda_{p_n}^1$.
We fix a parameter $\mu > 0$ and restrict to the case
$\lambda_{p_i}^1 \in \{\pm \mu\}$ for $i=1,\cdots,n$.
A parabolic line bundle of this form thus consists
of the data $(L, \sigma_{p_1}, \cdots, \sigma_{p_n})$, where
$\pi_L:L \rightarrow C$ is a holomorphic line bundle and
$\sigma_{p_i} \in \{\pm 1\}$.
The parabolic degree and parabolic slope of a parabolic line bundle
$(L, \sigma_{p_1}, \cdots, \sigma_{p_n})$ are given by
\begin{align}
  \nonumber
  \pdeg (L, \sigma_{p_1}, \cdots, \sigma_{p_n}) =
  \pslope (L, \sigma_{p_1}, \cdots, \sigma_{p_n}) =
  \deg L + \mu \sum_{i=1}^n \sigma_{p_i} =
  \slope L + \mu \sum_{i=1}^n \sigma_{p_i}.
\end{align}

Next we consider rank 2 parabolic bundles.
We fix a parameter $\mu > 0$ and restrict to the case
$s_i = 2$, $m_{p_i}^1 = m_{p_i}^2 = 1$, and
$\lambda_{p_i}^1 = -\lambda_{p_i}^2 = \mu$ for $i=1,\cdots,n$.
A rank 2 parabolic bundle of this form thus consists of the data
$(E,\ell_{p_1},\cdots,\ell_{p_n})$, where
$\pi_E:E \rightarrow C$ is a rank 2 holomorphic vector bundle and
$\ell_{p_i} \in \mathbbm{P}(E_{p_i})$ is a line in the fiber
$E_{p_i} = \pi_E^{-1}(p_i)$ over the point $p_i$ for $i=1,\cdots,n$.
The parabolic slope and parabolic degree of a rank 2 parabolic bundle
$(E,\ell_{p_1},\cdots,\ell_{p_n})$ are given by
\begin{align}
  \nonumber
  \pdeg (E,\ell_{p_1},\cdots,\ell_{p_n}) &= \deg E, &
  \pslope (E,\ell_{p_1},\cdots,\ell_{p_n}) &=
  \slope E.
\end{align}

\subsection{Stable, semistable, and unstable parabolic bundles}

Consider a rank 2 parabolic bundle $(E,\ell_{p_1},\cdots,\ell_{p_n})$
and a line subbundle $L \subset E$.
There are induced parabolic structures on the line bundles
$L$ and $E/L$ given by
$(L,\sigma_{p_1},\cdots,\sigma_{p_n})$ and
$(E/L,-\sigma_{p_1},\cdots,-\sigma_{p_n})$,
where
\begin{align}
  \nonumber
  \sigma_{p_i} =
  \left\{
  \begin{array}{ll}
    +1 &
    \quad \mbox{if $L_{p_i} = \ell_{p_i}$,} \\
    -1 &
    \quad \mbox{if $L_{p_i} \neq \ell_{p_i}$.} \\
  \end{array}
  \right.
\end{align}

\begin{definition}
Given a rank 2 parabolic bundle $(E,\ell_{p_1},\cdots,\ell_{p_n})$ and a
line subbundle $L \subset E$, we say that the induced parabolic bundle
$(L,\sigma_{p_1},\cdots,\sigma_{p_n})$ is a {\em parabolic subbundle}
of $(E,\ell_{p_1},\cdots,\ell_{p_n})$ and the induced parabolic bundle
$(E/L,-\sigma_{p_1},\cdots,-\sigma_{p_n})$ is a
{\em parabolic quotient bundle} of $(E,\ell_{p_1},\cdots,\ell_{p_n})$.
\end{definition}

\begin{definition}
A rank 2 parabolic bundle $(E,\ell_{p_1},\cdots,\ell_{p_n})$ is
said to be {\em decomposable} if there exists a decomposition
$E = L \oplus L'$ for line bundles $L$ and $L'$ such that
$\ell_{p_i} \in \{L_{p_i}, L_{p_i}'\}$ for $i=1,\cdots,n$.
For a rank 2 decomposable parabolic bundle
$(E,\ell_{p_1},\cdots,\ell_{p_n})$ we write
\begin{align}
  \nonumber
  (E,\ell_{p_1},\cdots,\ell_{p_n}) =
  (L,\sigma_{p_1},\cdots,\sigma_{p_n}) \oplus
  (L',\sigma_{p_1}',\cdots,\sigma_{p_n}'),
\end{align}
where $(L,\sigma_{p_1},\cdots,\sigma_{p_n})$ and
$(L',\sigma_{p_1}',\cdots,\sigma_{p_n}')$
are the induced parabolic structures on $L$ and $L'$.
\end{definition}

\begin{definition}
A rank 2 parabolic bundle is {\em stable} if its
parabolic slope is strictly greater than the parabolic slope of any of
its proper parabolic subbundles,
{\em semistable} if its parabolic slope is greater
than or equal than the parabolic slope of any of its proper
parabolic subbundles,
\emph{strictly semistable} if it is semistable but not stable,
and
{\em unstable} if it has a proper parabolic
subbundle of strictly greater slope.
\end{definition}

If $\calE$ is a stable parabolic bundle, then
$\Aut(\calE) = \Complex^\times$ consists only of trivial automorphisms
that scale the fibers of the underlying vector bundle by a constant
factor.

\begin{theorem}
\label{theorem:psemistable-semistable}
If the rank 2 parabolic bundle $(E, \ell_{p_1}, \cdots, \ell_{p_n})$
is semistable and $\mu < 1/2n$, then $E$ is semistable.
\end{theorem}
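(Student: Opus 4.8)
The plan is to prove the statement by contradiction: assuming $E$ is \emph{not} semistable, I will exhibit a parabolic subbundle of $(E,\ell_{p_1},\cdots,\ell_{p_n})$ whose parabolic slope strictly exceeds that of the whole parabolic bundle, contradicting parabolic semistability.

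The first step is to use instability of $E$. Since $E$ has rank $2$, a destabilizing subbundle must be a line subbundle $L \subset E$ with $\slope(L) = \deg L > \slope(E) = \tfrac{1}{2}\deg E$. The crucial elementary observation is a quantization effect: $\deg L$ is an integer while $\tfrac{1}{2}\deg E \in \tfrac{1}{2}\Ints$, so a strict inequality between them forces a gap of at least $\tfrac{1}{2}$; that is, $\deg L \geq \slope(E) + \tfrac{1}{2}$.

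The second step is to endow $L$ with its induced parabolic structure $(L,\sigma_{p_1},\cdots,\sigma_{p_n})$, where $\sigma_{p_i} \in \{\pm 1\}$ is defined as in equation (\ref{eqn:sigma-pi}). Because $\sigma_{p_i} \geq -1$ for every $i$, I obtain
\[
  \pslope(L,\sigma_{p_1},\cdots,\sigma_{p_n}) = \deg L + \mu \sum_{i=1}^n \sigma_{p_i} \;\geq\; \deg L - \mu n \;\geq\; \slope(E) + \tfrac{1}{2} - \mu n .
\]
Invoking the hypothesis $\mu < 1/(2n)$ makes the right-hand side strictly larger than $\slope(E) = \pslope(E,\ell_{p_1},\cdots,\ell_{p_n})$. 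Hence $(L,\sigma_{p_1},\cdots,\sigma_{p_n})$ is a proper parabolic subbundle of strictly greater parabolic slope, so $(E,\ell_{p_1},\cdots,\ell_{p_n})$ is parabolically unstable, contradicting the assumption. Therefore $E$ must be semistable.

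I do not expect any genuine obstacle: the argument is a one-line estimate once the integrality gap is noticed. The only point requiring care is the bookkeeping of signs — checking that the extreme case $\sigma_{p_i} = -1$ for all $i$, which makes the parabolic weight contribution as negative as possible (namely $-\mu n$), still leaves the inequality strict, and this is exactly what the bound $\mu < 1/(2n)$ guarantees. It is also worth remarking that no relation between $L$ and the lines $\ell_{p_i}$ is used, so the estimate applies verbatim to any line subbundle of $E$.
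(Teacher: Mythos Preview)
Your proof is correct and follows essentially the same approach as the paper: both argue by contradiction, take a destabilizing line subbundle $L$, use the integrality gap $\slope(L) - \slope(E) \geq 1/2$, and bound the parabolic weight contribution by $-\mu n$ to conclude that $L$ with its induced parabolic structure violates parabolic semistability when $\mu < 1/(2n)$.
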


\begin{proof}
We will prove the contrapositive, so assume that $E$ is unstable.
Then there is a line subbundle $L \subset E$ such that
$\slope L > \slope E$.
Consider the parabolic structure
$(L,\sigma_{p_1},\cdots,\sigma_{p_n})$ induced on $L$ by
$(E, \ell_{p_1}, \cdots, \ell_{p_n})$.
We have that
\begin{align}
  \label{eqn:pslope-slope}
  \pslope(L,\sigma_{p_1},\cdots,\sigma_{p_n}) -
  \pslope(E, \ell_{p_1}, \cdots, \ell_{p_n}) =
  \slope L + \mu \sum_{i=1}^n \sigma_{p_i} - \slope E.
\end{align}
Since $\slope L$ is an integer, $\slope E$ is an integer or
half-integer, and
$\slope L > \slope E$, it follows that
$\slope L - \slope E \geq 1/2$.
From equation (\ref{eqn:pslope-slope}) and the assumption that
$\mu < 1/2n$, it follows that
\begin{align}
  \nonumber
  \pslope(L,\sigma_{p_1},\cdots,\sigma_{p_n}) -
  \pslope(E, \ell_{p_1}, \cdots, \ell_{p_n}) \geq
  1/2 - n\mu > 0,
\end{align}
so $(E, \ell_{p_1}, \cdots, \ell_{p_n})$ is unstable.
\end{proof}

Throughout this paper we will always assume $\mu \ll 1$, by which we
mean that $\mu$ is always chosen to be sufficiently small such that
Theorem \ref{theorem:psemistable-semistable} holds under whatever
circumstances we are considering.

Consider a rank 2 vector bundle $E$ over a curve $C$.
If $E$ is unstable, then
Theorem \ref{theorem:psemistable-semistable} implies that the
parabolic bundle
$(E,\ell_{p_1},\cdots,\ell_{p_n})$ is unstable.
If $E$ is a semistable, then the stability of the parabolic bundle
$(E,\ell_{p_1},\cdots,\ell_{p_n})$ can be characterized as follows:

\begin{theorem}
\label{theorem:parabolic-semistable}
Consider a rank 2 parabolic bundle of the form
$(E,\ell_{p_1},\cdots,\ell_{p_n})$ with $E$ semistable.
Let $m$ be the maximum number of lines that are bad in the
same direction.
Such a parabolic bundle is
stable if and only if $m < n/2$,
semistable if and only if $m \leq n/2$, and
unstable if and only if $m > n/2$.
In particular, if $n$ is odd then stability and semistability are
equivalent.
\end{theorem}

\begin{example}
\label{example:cp1-pstable}
As a special case of Theorem \ref{theorem:parabolic-semistable},
consider parabolic bundles $(E, \ell_{p_1}, \cdots, \ell_{p_n})$ over
$\CP^1$ with underlying vector bundle $E = \calO \oplus \calO$.
We can globally trivialize $E$ and identify all the fibers with
$\Complex^2$.
All lines of $E$ are bad, and lines
are bad in the same direction if and only if they are equal
under the global trivialization.
Let $m$ denote the maximum number of lines $\ell_{p_i}$ equal to any
given line in $\CP^1$.
From Theorem \ref{theorem:parabolic-semistable}, we find that
$(E, \ell_{p_1}, \cdots, \ell_{p_n})$ is stable if
$m < n/2$, semistable if $m \leq n/2$, and
unstable if $m > n/2$.
For example,
$(E, \ell_{p_1})$ is unstable,
$(E, \ell_{p_1}, \ell_{p_2})$ is strictly semistable if
the lines are distinct and unstable otherwise, and
$(E, \ell_{p_1}, \ell_{p_2}, \ell_{p_3})$ is stable if
the lines are distinct and unstable otherwise.
\end{example}

\subsection{$S$-equivalent semistable parabolic bundles}

There is Jordan-H\"{o}lder theorem for parabolic bundles, which
asserts that any semistable parabolic bundle of parabolic degree 0 has
a filtration in which quotients of successive parabolic bundles
(composition factors) in the filtration are stable with parabolic
slope 0 (see \cite[Remark 1.16]{Mehta-Seshadri}).
The filtration is not unique, but the composition factors are unique
up to permutation.
It follows that one can define an associated graded bundle of a
semistable parabolic bundle of parabolic degree 0 that is unique up to
isomorphism.

We will need the concept of an associated graded parabolic bundle only
for the case of semistable rank 2 parabolic bundles.
If such a parabolic bundle $\calE$ is stable, then its associated
graded parabolic bundle $\gr \calE$ is just $\calE$.
Now consider a strictly semistable parabolic bundle
$\calE = (E,\ell_{p_1},\cdots,\ell_{p_n})$.
Under our standard assumption that $\mu \ll 1$, it follows from
Theorem \ref{theorem:psemistable-semistable} that
$E$ is semistable.
The associated graded parabolic bundle $\gr \calE$ is given by
\begin{align}
  \nonumber
  \gr (E,\ell_{p_1},\cdots,\ell_{p_n}) =
  (L,\sigma_{p_1},\cdots,\sigma_{p_n}) \oplus
  (E/L,-\sigma_{p_1},\cdots,-\sigma_{p_n}),
\end{align}
where $L \subset E$ is a line subbundle such that
$\slope L = \slope E$, and
$(L,\sigma_{p_1},\cdots,\sigma_{p_n})$ and
$(E/L,-\sigma_{p_1},\cdots,-\sigma_{p_n})$ are the induced parabolic
structures on $L$ and $E/L$.
Note that
\begin{align}
\nonumber
\pslope(\gr(E,\ell_{p_1},\cdots,\ell_{p_n})) =
\pslope(E,\ell_{p_1},\cdots,\ell_{p_n}) = \slope E.
\end{align}

\begin{definition}
We say that two semistable rank 2 parabolic bundles are
{\em $S$-equivalent} if their associated graded bundles are
isomorphic.
\end{definition}

Isomorphic parabolic bundles are $S$-equivalent.
Here we give an example to show that the converse does not always
hold:

\begin{example}
\label{example:cp1-s-equiv}
Consider parabolic bundles over $\CP^1$ with underlying vector bundle
$E = \calO \oplus \calO$.
We can globally trivialize $E$ and identify all the fibers with
$\Complex^2$.
Let $A$, $B$, and $C$ be distinct lines in $\CP^1$, and consider the
two strictly semistable parabolic bundles
\begin{align}
  \nonumber
  \calE :=
  (E,\,\ell_{p_1} = A,\,\ell_{p_2} = A,\, \ell_{p_3} = B,\, \ell_{p_4} = C),
  &&
  \calE' :=
  (E,\,
  \ell_{p_1}' = B,\,\ell_{p_2}' = C,\, \ell_{p_3}' = A,\, \ell_{p_4}' = A).
\end{align}
The bundles $\calE$ and $\calE'$ are
not isomorphic but are $S$-equivalent, since the associated graded
bundles of both bundles are isomorphic to
\begin{align}
  \nonumber
  (\calO,\,\sigma_{p_1} = 1,\,\sigma_{p_2} = 1,\,\sigma_{p_3} = -1,\,
  \sigma_{p_4} = -1) \oplus
  (\calO,\,\sigma_{p_1} = -1,\,\sigma_{p_2} = -1,\,\sigma_{p_3} = 1,\,
  \sigma_{p_4} = 1).
\end{align}
\end{example}

\subsection{Moduli spaces of rank 2 parabolic bundles}

\begin{definition}
We define $M^{ss}(C,n)$ (respectively $M^s(C,n)$), to be the moduli
space of semistable (respectively stable) rank 2
parabolic bundles of the form $(E,\ell_{p_1},\cdots,\ell_{p_n})$ such
that $E$ has trivial determinant bundle, mod $S$-equivalence.
In particular, $M^{ss}(C,0) = M^{ss}(C)$ and $M^s(C,0) = M^s(C)$.
As always, we assume that $\mu \ll 1$.
This space is defined in \cite{Mehta-Seshadri}; see also
\cite{Bhosle}.
\end{definition}

\begin{remark}
\label{remark:parabolic-moduli-interp}
An alternative way of interpreting $M^{ss}(C,n)$ is as
the space of flat $SU(2)$-connections on a trivial rank 2 complex
vector bundle $E \rightarrow C - \{p_1, \cdots, p_n\}$, where
the holonomy around each puncture point $p_i$ is conjugate to
$\diag(e^{2\pi i\mu}, e^{-2\pi i\mu})$,
mod $SU(2)$ gauge transformations.
Yet another way of interpreting the space $M^{ss}(C,n)$ is as the
character variety $R(C,n)$ of conjugacy
classes of group homomorphisms
$\pi_1(C - \{p_1, \cdots, p_n\}) \rightarrow SU(2)$ that take
loops around the marked points to matrices conjugate to
$\diag(e^{2\pi i\mu}, e^{-2\pi i\mu})$.
Note that $\mu = 1/4$ corresponds to the traceless character variety.
We will not use these interpretations here.
\end{remark}

For rational $\mu$, the moduli space $M^s(C,n)$ has the structure of a
complex manifold of dimension $3(g - 1) + n$, where $g$ is the genus of
the curve $C$, and $M^s(C,n)$ is compact for $n$ odd.
The space $M^s(C,n)$ carries a canonical symplectic
form, which is obtained by viewing $M^s(C,n)$ as a Hamiltonian
reduction of a space of $SU(2)$-connections with prescribed
singularities.

\begin{example}
\label{example:cp1-parabolic-moduli-space}
Let $C = \CP^1$ be a rational curve.
If we fix $n \leq 3$, then all rank 2 parabolic bundles of the form
$(\calO \oplus \calO,\ell_{p_1},\cdots,\ell_{p_n})$ for which all the
lines are distinct are isomorphic.
From this fact, together with the results of Example
\ref{example:cp1-pstable}, we find that
\begin{align}
  \nonumber
  M^{ss}(\CP^1,0) &= \{pt\}, &
  M^{ss}(\CP^1,1) &= \varnothing, &
  M^{ss}(\CP^1,2) &= \{pt\}, &
  M^{ss}(\CP^1,3) &= \{pt\}, \\
  \nonumber
  M^s(\CP^1,0) &= \varnothing, &
  M^s(\CP^1,1) &= \varnothing, &
  M^s(\CP^1,2) &= \varnothing, &
  M^s(\CP^1,3) &= \{pt\}.
\end{align}
Using the cross-ratio and considerations of $S$-equivalence as
described in Example \ref{example:cp1-s-equiv}, one can show
\begin{align}
  \nonumber
  M^{ss}(\CP^1,4) &= \CP^1, &
  M^{s}(\CP^1,4) &= \CP^1 - \{\textup{3 points}\}.
\end{align}
\end{example}

\begin{example}
Let $X$ be an elliptic curve.
From
Corollary \ref{corollary:map-bundle-elliptic},
Example \ref{example:elliptic-vb-moduli-space}, and
Theorem \ref{theorem:parabolic-semistable},
we have that
\begin{align}
  \nonumber
  M^{ss}(X,0) &= \CP^1, &
  M^s(X,0) &= \varnothing, &
  M^{ss}(X,1) &= \CP^1, &
  M^s(X,1) &= \CP^1.
\end{align}
In \cite{Vargas} it is shown that
\begin{align}
  \nonumber
  M^{ss}(X,2) &= (\CP^1)^2, &
  M^s(X,2) &= (\CP^1)^2 - g(X),
\end{align}
where $g:X \rightarrow (\CP^1)^2$ is a holomorphic embedding.
We also derive this result in Section \ref{sec:pm-x-1-1}.
\end{example}

\begin{remark}
Throughout this paper we assume $\mu \ll 1$, but for some
applications one wants to take $\mu = 1/4$ in order to interpret
$M^{ss}(C,n)$ as a traceless character variety, as described in
Remark \ref{remark:parabolic-moduli-interp}.
In general $M^{ss}(C,n)$ depends on $\mu$; for example, for
$0 \leq n \leq 4$ the space $M^{ss}(\CP^1,n)$ is the same for
$\mu \ll 1/4$ and $\mu = 1/4$, but, as shown in \cite{Seidel},
for $n = 5$ we have
\begin{align}
  \nonumber
  M^{ss}(\CP^1,5) = \left\{
  \begin{array}{ll}
    \CP^2 \# 4\overline{\CP}^2 &
    \quad \mbox{for $\mu \ll 1$,} \\
    \CP^2 \# 5\overline{\CP}^2 &
    \quad \mbox{for $\mu = 1/4$.} \\
  \end{array}
  \right.
\end{align}
The dependence of $M^{ss}(C,n)$ on $\mu$ is discussed
in \cite{Boden}.
\end{remark}

\end{appendix}

\section*{Acknowledgments}

The author would like to express his gratitude towards Ciprian
Manolescu for suggesting the problem considered in this paper and
for providing invaluable guidance, Joel Kamnitzer for helpful
discussions, Burt Totaro for helpful advice and for offering extensive
comments on an earlier version of this paper that led to significant
changes in the current version, and Chris Woodward for sharing his
personal notes on the Woodward embedding described in Section
\ref{sec:woodward-embedding}.
The author was partially supported by NSF grant number
DMS-1708320.

\bibliographystyle{abbrv}
\bibliography{v2-hecke-modifications}

\begin{thebibliography}{10}

\bibitem{Abouzaid-Smith}
M.~Abouzaid and I.~Smith.
\newblock {Khovanov} homology from {Floer} cohomology.
\newblock {\em J. Amer. Math. Soc.}, 32:1--79, 2019.

\bibitem{Atiyah}
M.~F. Atiyah.
\newblock Vector bundles over an elliptic curve.
\newblock {\em Proc. London Math. Soc.}, 7:414--452, 1957.

\bibitem{Bakalov}
B.~Bakalov and A.~Kirillov.
\newblock {\em Lectures on tensor categories and modular functors}, volume~21
  of {\em University Lecture Series}.
\newblock American Mathematical Society, 2001.

\bibitem{Bhosle}
U.~N. Bhosle.
\newblock Parabolic vector bundles on curves.
\newblock {\em Ark. Math.}, 27(1--2):15--22, 1989.

\bibitem{Biswas}
I.~Biswas, L.~Brambila-Paz, and P.~E. Newstead.
\newblock Stability of projective {Poincar\'{e}} and {Picard} bundles.
\newblock {\em Bull. London Math. Soc.}, 41:458--472, 2009.

\bibitem{Boden}
H.~U. Boden and Y.~Hu.
\newblock Variations of moduli of parabolic bundles.
\newblock {\em Math. Ann.}, 301:539--559, 1995.

\bibitem{Grothendieck}
A.~Grothendieck.
\newblock Sur la classification des fibres holomorphes sur la sphere de
  {Riemann}.
\newblock {\em Am. J. Math.}, 79:121--138, 1957.

\bibitem{Heinloth}
J.~Heinloth.
\newblock Lectures on the moduli stack of vector bundles on a curve.
\newblock In {\em Affine Flag Manifolds and Principal Bundles}, Trends in
  Mathematics, pages 123--153. Springer, 2010.

\bibitem{Hoffmann}
N.~Hoffmann.
\newblock Rationality and {Poincar\'{e}} families for vector bundles with extra
  structure on a curve.
\newblock {\em Int. Math. Res. Not.}, article ID rnm010, 2007.

\bibitem{Horton}
H.~T. Horton.
\newblock A symplectic instanton homology via traceless character varieties.
\newblock {\em arXiv preprint arXiv:1611.09927v2}, 2016.

\bibitem{Iena}
O.~Iena.
\newblock Vector bundles on elliptic curves and factors of automorphy.
\newblock {\em Rend. Istit. Mat. Univ. Trieste}, 43:61--94, 2011.

\bibitem{Jones}
V.~Jones.
\newblock A polynomial invariant for knots via von {Neumann} algebras.
\newblock {\em Bull. Amer. Math. Soc.}, 12:103--111, 1985.

\bibitem{Kamnitzer}
J.~Kamnitzer.
\newblock {Grassmannians}, moduli spaces and vector bundles.
\newblock In {\em The Beilinson-Drinfeld Grassmannian and symplectic knot
  homology}, volume~14, pages 81--94. Clay Math. Proc., 2011.

\bibitem{Kapustin-Witten}
A.~Kapustin and E.~Witten.
\newblock Electric-magnetic duality and the geometric {Langlands} program.
\newblock {\em Commun. Number Theory Phys.}, 1(1):1--236, 2007.

\bibitem{Khovanov}
M.~Khovanov.
\newblock A categorification of the {Jones} polynomial.
\newblock {\em Duke Math. J.}, 101(3):359--426, 2000.

\bibitem{Kronheimer-2}
P.~Kronheimer and T.~Mrowka.
\newblock {Khovanov} homology is an unknot-detector.
\newblock {\em Publ. Math. Inst. Hautes \'{E}tudes Sci.}, 113:97--208, 2011.

\bibitem{Kronheimer-3}
P.~Kronheimer and T.~Mrowka.
\newblock Filtrations on instanton homology.
\newblock {\em Quantum Topol.}, 5:61--97, 2014.

\bibitem{LePotier}
J.~Le~Potier.
\newblock {\em Lectures on Vector Bundles}.
\newblock Cambridge University Press, 1997.

\bibitem{Mehta-Seshadri}
V.~B. Mehta and C.~S. Seshadri.
\newblock Moduli of vector bundles on curves with parabolic structures.
\newblock {\em Math. Ann.}, 248:205--239, 1980.

\bibitem{Mukai}
S.~Mukai and W.~M. Oxbury.
\newblock {\em An Introduction to Invariants and Moduli}.
\newblock Cambridge University Press, 2012.

\bibitem{Nagaraj}
D.~S. Nagaraj.
\newblock Parabolic bundles and parabolic {Higgs} bundles.
\newblock {\em arXiv preprint arXiv:1702.05295}, 2017.

\bibitem{Schaffhauser}
F.~Schaffhauser.
\newblock Differential geometry of holomorphic vector bundles on a curve.
\newblock {\em arXiv preprint arXiv:1509.01734}, 2015.

\bibitem{Seidel}
P.~Seidel.
\newblock Lectures on four-dimensional {Dehn} twists.
\newblock {\em arXiv preprint arXiv:math/0309012}, 2003.

\bibitem{Seidel-Smith}
P.~Seidel and I.~Smith.
\newblock A link invariant from the symplectic geometry of nilpotent slices.
\newblock {\em Duke Math. J.}, 134(3):453--514, 2006.

\bibitem{Seshadri}
C.~S. Seshadri.
\newblock Space of unitary vector bundles on a compact {Riemann} surface.
\newblock {\em Ann. of Math.}, 85(2):303--336, 1967.

\bibitem{Teixidor}
M.~Teixidor~i Bigas.
\newblock Vector bundles on curves.
\newblock \url{http://emerald.tufts.edu/~mteixido/files/vectbund.pdf}.
\newblock Accessed 2018-05-23.

\bibitem{Tu}
L.~W. Tu.
\newblock Semistable bundles over an elliptic curve.
\newblock {\em Adv. Math.}, 98:1--26, 1993.

\bibitem{Vargas}
N.~F. Vargas.
\newblock Geometry of the moduli of parabolic bundles on elliptic curves.
\newblock {\em Trans. Am. Math., to appear; arXiv preprint arXiv:1611.05417},
  2016.

\bibitem{Witten-2}
E.~Witten.
\newblock {Khovanov} homology and gauge theory.
\newblock {\em arXiv preprint arXiv:1108.3103}, 2011.

\bibitem{Witten}
E.~Witten.
\newblock Fivebranes and knots.
\newblock {\em Quantum Topol.}, 3(1):1--137, 2012.

\bibitem{Witten-3}
E.~Witten.
\newblock Two lectures on gauge theory and {Khovanov} homology.
\newblock {\em arXiv preprint arXiv:1603.03854}, 2016.

\bibitem{Woodward}
C.~Woodward.
\newblock Unpublished notes.

\end{thebibliography}

\end{document}